\numberwithin{equation}{section}
\newtheorem{cor}[equation]{Corollary}
\newtheorem{lem}[equation]{Lemma}
\newtheorem{prop}[equation]{Proposition}
\newtheorem{thm}[equation]{Theorem}
\newtheorem{quest}[equation]{Question}
\newtheorem{fact}[equation]{Fact}
\newtheorem{Example}[equation]{Example}
\newenvironment{ex}{\begin{Example}\rm}{\end{Example}}
\newtheorem{remark}[equation]{Remark}
\newenvironment{rmk}{\begin{remark}\rm}{\end{remark}}
\def\co{\colon\thinspace}
\newcommand{\Cr}{{\mathfrak C}}
\newcommand{\e}{\varepsilon}
\def\a{\alpha}
\def\g{\gamma}
\def\de{\delta}
\def\d{\partial}
\def\k{\kappa}
\def\th{\theta}
\def\r{\rho}
\def\s{\sigma}
\def\S1{\bf S^1}
\begin{document}

\abovedisplayskip=6pt plus3pt minus3pt
\belowdisplayskip=6pt plus3pt minus3pt

\title[Rays and souls in von Mangoldt planes]
{\bf Rays and souls in von Mangoldt  planes}

\thanks{\it 2010 Mathematics Subject classification.\rm\ 
Primary 53C20, Secondary 53C22, 53C45.
\it\ Keywords:\rm\ radial curvature, critical point, von Mangoldt, surface
of revolution, ray, nonnegative curvature, soul.}\rm

\author{Igor Belegradek \and Eric Choi \and Nobuhiro Innami}

\address{Igor Belegradek\\School of Mathematics\\ Georgia Institute of
Technology\\ Atlanta, GA 30332-0160\\ USA}\email{ib@math.gatech.edu}

\address{Eric Choi\\Department of Mathematics and Computer Science\\ 
Emory University\\400 Dowman Dr., W401\\Atlanta, GA 30322\\ USA}
\email{ericchoi314@gmail.com}

\address{Nobuhiro Innami\\Department of Mathematics\\
Faculty of Science, Niigata University\\
Niigata, 950-2181\\JAPAN}

\email{innami@math.sc.niigata-u.ac.jp}


\date{}
\begin{abstract} 
We study rays in von Mangoldt planes, which has
applications to the structure of open complete manifolds with
lower radial curvature bounds. We prove that the set of souls of 
any rotationally symmetric plane of nonnegative curvature is 
a closed ball, and if the plane is von Mangoldt we compute
the radius of the ball.
We show that each cone in $\mathbb R^3$ can be smoothed
to a von Mangoldt plane.
\end{abstract}
\maketitle

\section{Introduction}

Let $M_m$ denote $\mathbb R^2$ equipped with a 
smooth, complete, rotationally symmetric Riemannian metric 
given in polar coordinates as $g_m:=dr^2+m^2(r)d\th^2$; let $o$ denote
the origin in $\mathbb R^2$.
We say that $M_m$ is a {\it von Mangoldt plane\,} if its 
sectional curvature $G_m:=-\frac{m^{\prime\prime}}{m}$ is a non-increasing
function of $r$. 

The Toponogov comparison theorem was extended in~\cite{IMS-topon-vm} 
to open complete manifolds with radial sectional curvature 
bounded below by the curvature of a von Mangoldt plane,  
leading to various applications in~\cite{ShiTan-mathZ-2002, KonOht-2007, KonTan-I}
and generalizations in~\cite{MasShi, KonTan-II, Mac}.

A point $q$ in a Riemannian manifold is called a {\it critical point
of infinity\,} if each unit tangent vector at $q$ 
makes angle $\le\frac{\pi}{2}$ with a ray that starts at $q$.
Let $\Cr_m$ denote the set of critical points of infinity of $M_m$; clearly
$\Cr_m$ is a closed, rotationally symmetric subset that
contains every pole of $M_m$, so that $o\in\Cr_m$.
One reason for studying $\Cr_m$ is the following consequence of 
the generalized Toponogov theorem of~\cite{IMS-topon-vm}. 

\begin{lem}
\label{lem-intro: crit}
Let $\hat M$ be a complete noncompact Riemannian manifold
with radial curvature bounded below by
the curvature of a von Mangoldt plane $M_m$, and let $\hat r$, $r$ 
denote the distance functions to the basepoints $\hat o$, $o$ of $\hat M$, $M_m$, 
respectively. 
If $\hat q$ is a critical point of $\hat r$, then 
$\hat r(\hat q)$ is contained in $r(\Cr_m)$.
\end{lem}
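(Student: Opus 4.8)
The plan is to reduce the statement to a fact about $M_m$ alone and then feed the critical point $\hat q$ into the generalized Toponogov theorem of \cite{IMS-topon-vm}. Put $t:=\hat r(\hat q)$; the case $t=0$ is trivial since $o\in\Cr_m$, so assume $t>0$. Because $\Cr_m$ is rotationally symmetric, it suffices to exhibit a \emph{single} critical point of infinity of $M_m$ on the circle $r=t$; fix any point $p$ with $r(p)=t$ and write $\partial_r$ for the outward meridian direction at $p$. First I would repackage the criticality of $\hat q$: the initial vectors at $\hat q$ of the minimal geodesics from $\hat q$ to $\hat o$ form a closed set that lies in no open half-space of $T_{\hat q}\hat M$, so $0$ is in its convex hull, and by Carath\'eodory I may pick finitely many such unit-speed minimal geodesics $\hat\mu_1,\dots,\hat\mu_k$, with $\hat u_j:=\hat\mu_j'(0)$, and weights $\lambda_j>0$, $\sum_j\lambda_j=1$, with $\sum_j\lambda_j\hat u_j=0$. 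I would also fix a ray $\hat\rho\colon[0,\infty)\to\hat M$ issuing from $\hat q$ (it exists since $\hat M$ is noncompact) and set $\hat\alpha_j:=\angle(\hat u_j,\hat\rho'(0))$, so that $\sum_j\lambda_j\cos\hat\alpha_j=\big\langle\sum_j\lambda_j\hat u_j,\hat\rho'(0)\big\rangle=0$.

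Next, for each $j$ and each $s>0$ I would apply \cite{IMS-topon-vm} to the geodesic triangle in $\hat M$ with vertices $\hat o,\hat q,\hat\rho(s)$ and sides $\hat\mu_j$ (length $t$), $\hat\rho|_{[0,s]}$ (length $s$, minimal because $\hat\rho$ is a ray), and a minimal geodesic from $\hat\rho(s)$ to $\hat o$ (length $\hat r(\hat\rho(s))$). The theorem produces a comparison triangle in $M_m$ with the same three side lengths, one vertex at $o$, and whose angle at the radius-$t$ vertex is no greater than the angle of the $\hat M$-triangle at $\hat q$. Placing the radius-$t$ vertex at $p$ and calling the third vertex $p_{j,s}$ (so $r(p_{j,s})=\hat r(\hat\rho(s))$ and $d(p,p_{j,s})=s$), and using that in $M_m$ the minimal geodesic from $p$ to $o$ is the inward meridian, the model angle at $p$ equals $\pi-\beta_{j,s}$, where $\beta_{j,s}$ is the angle the minimal geodesic $[p,p_{j,s}]$ makes with $\partial_r$. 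Hence $\hat\alpha_j\ge\pi-\beta_{j,s}$ for all $s>0$. (The comparison triangle exists for every $s$: the curvature bound forces $s=d(\hat q,\hat\rho(s))$ to be at most the largest $M_m$-distance between the circles $r=t$ and $r=\hat r(\hat\rho(s))$, which is exactly the condition for a model triangle with those data.)

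Then I would let $s\to\infty$. Since $\hat\rho$ is a ray, $\hat r(\hat\rho(s))=d(\hat o,\hat\rho(s))\ge s-t\to\infty$, so the $[p,p_{j,s}]$ are minimal geodesics of unbounded length; along a sequence $s_n\to\infty$ on which the $k$ initial directions of $[p,p_{j,s_n}]$ all converge, the limit geodesics $\sigma_j$ from $p$ are rays of $M_m$ (a sublimit of minimal segments of lengths $\to\infty$ is a ray), and $\psi_j:=\angle(\sigma_j'(0),\partial_r)=\lim_n\beta_{j,s_n}$ satisfies $\hat\alpha_j\ge\pi-\psi_j$, i.e.\ $\cos\hat\alpha_j\le-\cos\psi_j$. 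Summing against the $\lambda_j$ gives $0=\sum_j\lambda_j\cos\hat\alpha_j\le-\sum_j\lambda_j\cos\psi_j$, so $\psi_{j_0}\ge\pi/2$ for some $j_0$. Thus $M_m$ has a ray from $p$ at angle $\psi_{j_0}\in[\pi/2,\pi]$ with $\partial_r$; together with the outward meridian ray and the reflection of $\sigma_{j_0}$ across the meridian through $p$, we obtain three rays from $p$ whose directions occupy circular positions $0,\psi_{j_0},-\psi_{j_0}$ in $T_pM_m$, with all three gaps ($\psi_{j_0},\psi_{j_0},2\pi-2\psi_{j_0}$) at most $\pi$. Such a triple lies in no open half-space of $T_pM_m$, so every unit tangent vector at $p$ makes angle $\le\pi/2$ with one of the three rays: $p$ is a critical point of infinity, hence $t\in r(\Cr_m)$.

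The crux is the interface with \cite{IMS-topon-vm}: one must use precisely its triangle (three-sides) comparison with the basepoint as a distinguished vertex. The hinge comparison at $\hat q$ alone would not suffice, because the $M_m$-geodesic it produces from $p$ need not be minimizing and one gets no control over how badly it fails to minimize; here minimality of the comparison-triangle side $[p,p_{j,s}]$ is exactly what forces the limit $\sigma_j$ to be a ray. The secondary technical point is making the $s\to\infty$ limit rigorous — persistence of the comparison triangles, subconvergence of $[p,p_{j,s}]$ to rays, and convergence of the model angles. The remaining ingredients (the convex-hull encoding of criticality, the existence of a ray from $\hat q$, and the elementary circle-gap estimate) are routine.
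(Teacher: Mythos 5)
Your argument is correct, and its engine is the same as the paper's: apply the generalized Toponogov theorem of \cite{IMS-topon-vm} to the triangles with vertices $\hat o$, $\hat q$, $\hat\rho(s)$, let $s\to\infty$, and use that the sides $[p,p_{j,s}]$ of the comparison triangles are \emph{minimal}, so their sublimits are rays of $M_m$. The difference is purely in the logical packaging, and it is worth seeing why the paper's version is so much shorter. You prove the implication directly, so you must certify that $p$ \emph{is} a critical point of infinity, i.e.\ control \emph{all} directions at $p$; this is what forces the convex-hull/Carath\'eodory encoding of criticality at $\hat q$, the weighted average $\sum_j\lambda_j\cos\hat\alpha_j=0$ to extract one ray with $\psi_{j_0}\ge\pi/2$, and the final three-ray circle-gap argument. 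The paper instead proves the contrapositive: if $\hat r(\hat q)\notin r(\Cr_m)$, then (since $q\in\Cr_m$ iff $\hat\k(r_q)\ge\pi/2$) \emph{every} ray from $q$ makes an angle $>\pi/2$ with $[q,o]$; so one fixes a single ray $\hat\g$ from $\hat q$, runs the same comparison for each segment $[\hat q,\hat o]$, and concludes that $\hat\g'(0)$ makes an obtuse angle with every such segment --- exhibiting one regular direction is all that non-criticality requires, so no averaging or multi-ray bookkeeping is needed. Two minor remarks on your write-up: the existence of the comparison triangle is part of the conclusion of the theorem in \cite{IMS-topon-vm}, so your parenthetical justification of it is unnecessary (and as stated is only a heuristic); and your closing three-ray step is in effect a re-proof of the observation, recorded in Section~\ref{sec: turn angle}, that $q\in\Cr_m$ iff $\hat\k(r_q)\ge\frac{\pi}{2}$, which you could simply quote.
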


Combined with the critical point theory of distance 
functions~\cite{Grove-crit-pt-1993},
\cite[Lemma 3.1]{Gre-surv}, \cite[Section 11.1]{Pet-book}, Lemma~\ref{lem-intro: crit}
implies the following.

\begin{cor}\label{cor-intro: crit pt theory}  
In the setting of Lemma~\textup{\ref{lem-intro: crit}},
for any $c$ in $[a,b]\subset r(M_m\mbox{--}\,\Cr_m)$, 
\vspace{-10pt}
\newline
$\bullet$
the ${\hat r}^{-1}$-preimage of $[a,b]$ is homeomorphic to 
${\hat r}^{-1}(a)\times [a,b]$, and
the \newline \phantom{$\bullet$}
${\hat r}^{-1}$-preimages  of points in $[a,b]$
are all homeomorphic;\vspace{3pt} \newline
$\bullet$
the $\hat r^{-1}$-preimage of $[0,c]$ is homeomorphic to a compact
smooth manifold \newline\phantom{$\bullet$} with boundary, and the homeomorphism maps
$\hat r^{-1}(c)$ onto the boundary; \vspace{3pt}\newline
$\bullet$
if $K\subset\hat M$ is a compact 
smooth submanifold, possibly with boundary, such 
\, \phantom{$\bullet$} that
$\hat r(K)\supset r(\Cr_m)$, then $\hat M$ is diffeomorphic to
the normal bundle of $K$.  
\end{cor}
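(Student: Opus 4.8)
The plan is to derive all three assertions from Lemma~\ref{lem-intro: crit} together with the critical point theory of distance functions as presented in \cite{Grove-crit-pt-1993}, \cite[Lemma 3.1]{Gre-surv}, and \cite[Section 11.1]{Pet-book}. I would first record an elementary remark about $\Cr_m$: it is closed and rotationally symmetric, hence a union of metric circles centered at $o$ together with possibly the point $o$, so $r(\Cr_m)$ is a closed subset of $[0,\infty)$ whose complement there is precisely the open set $r(M_m\mbox{--}\,\Cr_m)$. By Lemma~\ref{lem-intro: crit} every critical value of $\hat r$ lies in $r(\Cr_m)$, so an interval $[a,b]\subset r(M_m\mbox{--}\,\Cr_m)$ contains no critical value of $\hat r$ and in particular $c$ is a regular value; moreover $\hat r$ is proper since $\hat M$ is complete, so all sublevel sets $\hat r^{-1}[0,t]$ are compact. ``No critical values in $[a,b]$'' and ``properness'' are exactly the inputs critical point theory needs.

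Granting this, the first assertion is the isotopy lemma: rescale a gradient-like vector field for $\hat r$ so that it has $\hat r$-derivative $1$ on $\hat r^{-1}[a,b]$; its flow carries $\hat r^{-1}(a)$ homeomorphically onto each level $\hat r^{-1}(t)$, $t\in[a,b]$, and produces a homeomorphism $\hat r^{-1}[a,b]\cong \hat r^{-1}(a)\times[a,b]$. For the second assertion, since $c$ is a regular value of the proper nonnegative function $\hat r$, the level $\hat r^{-1}(c)$ is a compact topological manifold that is collared in $\hat M$, so $\hat r^{-1}[0,c]$ is a compact topological manifold with boundary $\hat r^{-1}(c)$; smoothing $\hat r$ near $\hat r^{-1}(c)$ (equivalently, invoking the refined form of the cited results) upgrades this to a homeomorphism onto a compact smooth manifold with boundary that sends $\hat r^{-1}(c)$ onto the boundary.

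For the third assertion, compactness of $K$ forces $r(\Cr_m)\subset\hat r(K)$ to be bounded, so by Lemma~\ref{lem-intro: crit} there is no critical value of $\hat r$ in $[\beta,\infty)$ for $\beta:=1+\max\hat r(K)$, and every $c\ge\beta$ is regular. The structure theorem of critical point theory for complete noncompact manifolds then shows $\hat M$ is diffeomorphic to the interior of the compact manifold $\hat r^{-1}[0,c]$, with $\hat r^{-1}[c,\infty)$ a product collar $\hat r^{-1}(c)\times[c,\infty)$. It remains to identify this open manifold with the total space of the normal bundle of $K$: here one uses that $K$ meets every $\hat r$-level at which $\hat r$ has a critical point (since $\hat r(K)\supset r(\Cr_m)$ and, by Lemma~\ref{lem-intro: crit}, every critical value lies in $r(\Cr_m)$) to run a deformation along a gradient-like field for $\hat r$, arranged to preserve a tubular neighborhood $N$ of $K$ while sweeping $\hat M\setminus N$ off to infinity; since $N$ is diffeomorphic to the normal bundle of $K$, so is $\hat M$.

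The first two assertions are routine once Lemma~\ref{lem-intro: crit} locates the critical values of $\hat r$. I expect the delicate point to be the third: one must carry out the soul-type isotopy for the merely continuous function $\hat r$ by working through a gradient-like vector field, and verify that a tubular neighborhood of the given submanifold $K$ --- rather than of a canonical sublevel set --- absorbs the entire critical set while its complement is pushed to infinity. This is where the precise statements of \cite{Grove-crit-pt-1993} and \cite[Section 11.1]{Pet-book} would be brought to bear.
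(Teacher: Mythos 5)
Your treatment of the first two bullets is fine and is exactly the route the paper intends: Lemma~\ref{lem-intro: crit} places all critical values of $\hat r$ inside the closed set $r(\Cr_m)$, properness of $\hat r$ gives compact sublevel sets, and the isotopy lemma of \cite{Grove-crit-pt-1993}, \cite[Lemma 3.1]{Gre-surv} does the rest. (The paper itself offers no written proof beyond this citation, so there is nothing more to compare there.)

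The third bullet is where your argument has a genuine gap, and you have put your finger on the right spot without resolving it. The hypothesis $\hat r(K)\supset r(\Cr_m)$ controls only the critical \emph{values} of $\hat r$: it says that every level containing a critical point is a level that $K$ happens to meet. It does not place the critical \emph{points} of $\hat r$ inside $K$, nor inside any tubular neighborhood $N$ of $K$; a priori they can sit anywhere in $\hat r^{-1}(r(\Cr_m))$, which is a union of metric spheres about $\hat o$ and is in general much larger than $N$. Consequently the deformation you propose --- ``run a gradient-like field for $\hat r$ preserving $N$ while sweeping $\hat M\setminus N$ off to infinity'' --- is not available: a gradient-like field exists only where $\hat r$ has no critical points, and $\hat M\setminus N$ may contain critical points. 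To see that the implication cannot be recovered from the stated hypothesis alone, take $M_m$ a paraboloid (so $r(\Cr_m)=\{0\}$ and the only possible critical point of $\hat r$ is $\hat o$, whence $\hat M\cong\mathbb R^n$), and let $K$ be an embedded circle through $\hat o$ in a $3$-dimensional $\hat M$ of this type: then $\hat r(K)\supset\{0\}=r(\Cr_m)$, but the normal bundle of $K$ is a plane bundle over $S^1$, which is not diffeomorphic to $\mathbb R^3$. What one actually needs (and what the intended applications, such as Corollary~\ref{cor-intro: homeo to .5-ball} with $K$ a smoothed sublevel set, supply) is that $K$ --- or a tubular neighborhood of $K$ --- contains the whole critical set, e.g. $\hat r^{-1}(r(\Cr_m))\subset \mathrm{int}\,N$; under that stronger hypothesis your flow argument on $\hat M\setminus N$ is legitimate and the conclusion follows. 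As written, your proof of the third bullet does not go through, and you should either strengthen the hypothesis you use or explain why, in your setting, the critical points of $\hat r$ are forced into $N$.
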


If $M_m$ is von Mangoldt and $G_m(0)\le 0$,
then $G_m\le 0$ everywhere, so every point is a pole and hence
$\Cr_m=M_m$ so that Lemma~\ref{lem-intro: crit} yields no information
about the critical points of $\hat r$. Of course, there are other ways
to get this information as illustrated by classical Gromov's
estimate: if $M_m$ is the standard $\mathbb R^2$,
then the set of critical points of $\hat r$ is compact; 
see e.g.~\cite[page 109]{Gre-surv}.

The following theorem determines $\Cr_m$ when $G_m\ge 0$ everywhere; 
note that the plane $M_m$ in (i)-(iii) need not be von Mangoldt.

\begin{thm} 
\label{thm-intro: nonneg curv}
If $G_m\ge 0$, then
\begin{enumerate}\vspace{-5pt} 
\item[\textup{(i)}] $\Cr_m$ is the closed $R_m$-ball centered at $o$ for some
$R_m\in [0,\infty]$.
\item[\textup{(ii)}] $R_m$ is positive if and only if
$\int_1^\infty m^{-2}$ is finite. 
\item[\textup{(iii)}] $R_m$ is finite if and only if $m^\prime(\infty)<\frac{1}{2}$.
\item[\textup{(iv)}]
If $M_m$ is von Mangoldt and $R_m$ is finite, 
then the equation $m^\prime(r)=\frac{1}{2}$ has
a unique solution $\r_m$, and the solution satisfies 
$\r_m> R_m$ and $G_m(r_m)>0$. 
\item[\textup{(v)}] 
If $M_m$ is von Mangoldt and
$R_m$ is finite and positive, then
$R_m$ is the unique solution of the integral equation 
$\int_x^\infty\frac{m(x)\, dr}{m(r)\,\sqrt{m^2(r)-m^2(x)}}=\pi$.
\end{enumerate}
\end{thm}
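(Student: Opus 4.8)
The plan is to express the condition $p\in\Cr_m$ through the single radial function
\[
A(x):=\int_x^\infty\frac{m(x)\,dr}{m(r)\,\sqrt{m^2(r)-m^2(x)}}\qquad(x>0),
\]
the total angle swept out by a half of the geodesic through $p=(x,0)$ that is tangent at $p$ to the circle $r=x$. First I would recall the basic structure of rays on a surface of revolution: the outward meridian from $p$ is always a ray, and the set of initial directions of rays at $p$ is a closed connected arc about the outward radial direction; consequently $p\in\Cr_m$ if and only if that tangent geodesic is itself a ray. Next I would prove
\[
p\in\Cr_m\iff A(x)\le\pi.
\]
If $A(x)>\pi$, the tangent geodesic and its image under $\th\mapsto-\th$ meet again at the point of angular coordinate $\pi$, so it fails to minimize beyond that point and is not a ray. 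If $A(x)<\pi$, consider minimizing geodesics $\s_n$ from $p$ to $(n,\pi)$: a geodesic leaving $p$ outward or tangentially has Clairaut constant $\le m(x)$ and hence moves monotonically outward sweeping total angle $\le A(x)<\pi$, too little to reach $(n,\pi)$, so each $\s_n$ leaves $p$ making angle $\ge\pi/2$ with $\partial_r$, and any subsequential limit is a ray with the same property; by the arc structure the tangent geodesic is then a ray, so $p\in\Cr_m$. The borderline $A(x)=\pi$ is handled by the continuity of $A$ and closedness of $\Cr_m$ established below.

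The second ingredient is monotonicity of $A$. Assuming $m$ strictly increasing (otherwise $m$ is eventually constant, $\int_1^\infty m^{-2}=\infty$ and $R_m=0$, a trivial case), the substitution $u=m(r)/m(x)$ gives
\[
A(x)=\int_1^\infty\frac{du}{u\sqrt{u^2-1}\;m'\!\bigl(m^{-1}(m(x)u)\bigr)} .
\]
Since $G_m\ge0$ forces $m$ concave and hence $m'$ non-increasing, for each $u$ the integrand is non-decreasing in $x$, so $A$ is non-decreasing and continuous on $(0,\infty)$, with $A(0^{+})=\int_1^\infty\frac{du}{u\sqrt{u^2-1}}=\frac\pi2$ and, by monotone convergence, $\displaystyle\lim_{x\to\infty}A(x)=\frac{\pi}{2\,m'(\infty)}$. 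Together with the criterion above this gives (i)--(iii): $\{x:A(x)\le\pi\}$ is an interval $[0,R_m]$, so $\Cr_m$ is the closed $R_m$-ball about $o$ with $R_m=\sup\{x:A(x)\le\pi\}$; $R_m>0$ iff $A<\pi$ near $0$, and since $A(x)<\infty\iff\int_1^\infty m^{-2}<\infty$ (from the behaviour of the integrand as $r\to\infty$) while $A(0^{+})=\frac\pi2$, this happens iff $\int_1^\infty m^{-2}<\infty$; and $R_m<\infty$ iff $\lim_{x\to\infty}A(x)>\pi$, i.e.\ $m'(\infty)<\frac12$ (if $m'(\infty)=\frac12$ then $A\le\pi$ everywhere and $R_m=\infty$).

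For (iv) and (v) I would bring in the von Mangoldt hypothesis that $G_m$ is non-increasing. From $m''=-mG_m\le0$, $m'$ is non-increasing; since $m'(0)=1$ and, by (iii), $m'(\infty)<\frac12$ when $R_m<\infty$, the equation $m'(r)=\frac12$ has a solution $\r_m$. If $G_m(\r_m)=0$ then $G_m\equiv0$ on $[\r_m,\infty)$, so $m'\equiv\frac12$ there and $m'(\infty)=\frac12$, contradicting (iii); hence $G_m(\r_m)>0$, $m'$ is strictly decreasing at $\r_m$, and $\r_m$ is unique. Evaluating the integrand of $A$ at $x=\r_m$: it equals $2$ at $u=1$ and is $\ge2$ for all $u>1$ (as $m'$ is non-increasing), with strict inequality on a set of positive measure (else $m'\equiv\frac12$ past $\r_m$), so $A(\r_m)>\pi$ and hence $\r_m>R_m$, proving (iv). For (v), the criterion and monotonicity give $\Cr_m\setminus\{o\}=(0,R_m]$ with $A>\pi$ on $(R_m,\infty)$, so continuity forces $A(R_m)=\pi$; if also $A(x_0)=\pi$ with $x_0<R_m$ then $A\equiv\pi$ on $[x_0,R_m]$, which via the integrand formula forces $m'$ constant on $[x_0,\infty)$, hence $m'(\infty)=\frac12$ and $R_m=\infty$, a contradiction; so $R_m$ is the unique solution of the stated integral equation.

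The step I expect to be the main obstacle is the equivalence $p\in\Cr_m\iff A(r(p))\le\pi$, and in particular the \emph{only if} direction, which rests on the fact that the set of ray-directions at a point of a surface of revolution is a closed connected arc about the outward meridian; establishing this requires a sandwiching argument (a geodesic from $p$ trapped between two rays from $p$ is again a ray). The remaining analysis---the monotonicity of $A$ and the calculus in (iv)--(v)---is routine, the only care needed being to exclude $A$ being locally constant, which is precisely where $G_m$ non-increasing and part (iii) enter.
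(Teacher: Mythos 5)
Your reduction of everything to the turn angle $A(x)=T_{\gamma_q}$ of the tangential geodesic, and the monotonicity of $A$ via the substitution $u=m(r)/m(x)$, is exactly how the paper treats parts (iv) and (v) (its Proposition~4.1 and Theorem~3.18), and your arguments for those two parts are essentially the paper's. The gap is in (i)--(iii), which are asserted for \emph{every} plane with $G_m\ge 0$, not only von Mangoldt ones. There your central criterion ``$q\in\Cr_m\iff A(r_q)\le\pi$'' is unproved in the direction you need. The implication $A(r_q)<\pi\Rightarrow q\in\Cr_m$ is fine (and does not even require the arc structure: one ray making angle $\ge\pi/2$ with $\partial_r$, its mirror image, and $\mu_q$ already cover all directions at $q$). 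The converse is the problem: from $q\in\Cr_m$ you only get a ray $\xi$ with $\angle(\dot\xi(0),\partial_r)\ge\pi/2$, and to conclude that the tangential geodesic itself is a ray you need that the ray directions at $q$ form an arc about $\mu_q$, i.e.\ that a geodesic ``above'' a ray is a ray. Your proposed sandwiching argument does not prove this: a geodesic from $q$ confined to the sector between two rays from $q$ need not be a ray, since its cut point can come from a conjugate point or from a second minimizer unrelated to the bounding rays. The paper proves the arc property only for von Mangoldt planes (Lemma~3.10), and the proof rests on Tanaka's theorem that the cut locus of any point of a von Mangoldt plane is a subray of the opposite meridian --- a substantial external input with no analogue for general $G_m\ge0$ (the paper remarks that the cut locus can then be disconnected). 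A direct estimate cannot substitute for it either: $\xi$ has Clairaut constant $c<m(r_q)$, so $T_\xi\le\pi$ controls $\int_{r_q}^\infty F_c$, which is \emph{smaller} than $A(r_q)=\int_{r_q}^\infty F_{m(r_q)}$; the inequality points the wrong way.

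For (i)--(iii) the paper therefore takes a different route that never asks whether $\gamma_q$ is a ray: it works with $\hat\kappa(r_q)$, the largest angle a ray from $q$ makes with $\mu_q$ (so $q\in\Cr_m\iff\hat\kappa(r_q)\ge\pi/2$), proves $\hat\kappa$ is non-increasing in $r$ by Gauss--Bonnet applied to triangles with a vertex running out along the opposite meridian (this gives (i)); quotes Tanaka's result that a non-meridian ray from some $q\ne o$ exists iff $\int_1^\infty m^{-2}<\infty$ and $\liminf m>0$ (this gives (ii)); and proves (iii) by a total-curvature/bigon argument together with the splitting theorem. To repair your write-up you should either restrict (i)--(iii) to von Mangoldt planes and invoke Tanaka's cut-locus theorem for the arc property, or replace the criterion by the $\hat\kappa$/Gauss--Bonnet argument. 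Two smaller points: the case where $m$ is not strictly increasing is not ``trivial'' --- that $R_m=0$ there again needs Tanaka's result that $\int_1^\infty m^{-2}=\infty$ forces $\mu_q$ to be the only ray from $q$; and the continuity of $A$, which you use to pin down $A(R_m)=\pi$ in (v), deserves a sentence (it does follow from your monotone-in-$x$ integrand by monotone convergence, or from the paper's Lemma~3.15).
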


Here is a sample application of part (iv) of 
Theorem~\ref{thm-intro: nonneg curv}
and Corollary~\ref{cor-intro: crit pt theory}:

\begin{cor}
\label{cor-intro: homeo to .5-ball}
Let $\hat M$ be a complete noncompact Riemannian manifold
with radial curvature from the basepoint $\hat o$
bounded below by the curvature of a von Mangoldt plane $M_m$.
If $G_m\ge 0$ and $m^\prime(\infty)<\frac{1}{2}$, then $\hat M$ is homeomorphic
to the metric $\r_m$-ball centered at $\hat o$, where $\r_m$ is
the unique solution of $m^\prime(r)=\frac{1}{2}$.
\end{cor}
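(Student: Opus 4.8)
The plan is to combine part (iv) of Theorem~\ref{thm-intro: nonneg curv} with the third bullet of Corollary~\ref{cor-intro: crit pt theory}, applied to a suitably chosen compact submanifold $K$. First I would note that the hypotheses $G_m\ge 0$ and $m^\prime(\infty)<\tfrac12$ put us exactly in the situation of Theorem~\ref{thm-intro: nonneg curv}(iii)--(iv): $R_m$ is finite, the equation $m^\prime(r)=\tfrac12$ has a unique solution $\r_m$, and $\r_m>R_m$. By definition $R_m<\infty$ means $\Cr_m$ is the closed $R_m$-ball about $o$, so $r(\Cr_m)=[0,R_m]$. Since $\r_m>R_m$, the point $\r_m$ lies in $r(M_m\text{--}\,\Cr_m)$, and in fact an entire closed interval $[a,\r_m]$ with $a>R_m$ lies in that set; this interval is what the critical point theory needs.

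Next I would take $K$ to be the metric $\r_m$-ball $\hat B$ centered at $\hat o$ in $\hat M$. The Toponogov-type comparison via Lemma~\ref{lem-intro: crit} shows that every critical point $\hat q$ of $\hat r$ satisfies $\hat r(\hat q)\in r(\Cr_m)=[0,R_m]$; since $R_m<\r_m$, the value $\r_m$ is a regular value of $\hat r$, so the sphere $\hat r^{-1}(\r_m)$ is a smooth hypersurface and $\hat B=\hat r^{-1}([0,\r_m])$ is a compact smooth submanifold with boundary (this is the content of the second bullet of Corollary~\ref{cor-intro: crit pt theory}, using that $[0,\r_m]$ meets $r(M_m\text{--}\,\Cr_m)$ near its right endpoint). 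Then $\hat r(K)=[0,\r_m]\supset [0,R_m]=r(\Cr_m)$, so the third bullet of Corollary~\ref{cor-intro: crit pt theory} gives that $\hat M$ is diffeomorphic to the normal bundle of $\hat B$. But $\hat B$ is a compact manifold with boundary sitting inside the manifold $\hat M$ of the same dimension, so its normal bundle is trivial of rank $0$, i.e.\ the normal bundle is just $\hat B$ itself; hence $\hat M$ is diffeomorphic, in particular homeomorphic, to $\hat B$, the metric $\r_m$-ball about $\hat o$.

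The one point requiring a little care — and the place I expect to have to argue most carefully — is the passage from ``$\r_m$ is a regular value and $\hat B$ is a compact manifold with boundary'' to ``the normal bundle of $\hat B$ is $\hat B$ itself.'' The third bullet of Corollary~\ref{cor-intro: crit pt theory} is phrased for a compact smooth submanifold $K$, and when $K$ is top-dimensional with boundary its normal bundle (as a submanifold-with-boundary of a manifold-without-boundary, collar included) is canonically identified with $K$; so ``$\hat M$ diffeomorphic to the normal bundle of $K$'' just says $\hat M$ is diffeomorphic to $K=\hat B$. I would either cite the relevant collar/tubular-neighborhood statement or, more simply, observe directly that $\hat r^{-1}([0,\r_m])$ is already a smooth compact manifold with boundary and that gradient flow of $\hat r$ past the last critical level $R_m$ gives a deformation retraction of $\hat M$ onto it, which combined with the collar gives the homeomorphism. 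Finally I would substitute $\r_m$ from Theorem~\ref{thm-intro: nonneg curv}(iv) to state the conclusion in the form quoted: $\hat M$ is homeomorphic to the metric $\r_m$-ball about $\hat o$, where $\r_m$ is the unique solution of $m^\prime(r)=\tfrac12$.
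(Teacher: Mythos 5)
Your proposal is correct and is exactly the argument the paper intends: the corollary is stated as an immediate consequence of Theorem~\ref{thm-intro: nonneg curv}(i),(iii),(iv) (giving $r(\Cr_m)=[0,R_m]$ with $R_m<\r_m$) together with the second and third bullets of Corollary~\ref{cor-intro: crit pt theory} applied to the $\r_m$-ball, whose normal bundle is trivially itself since it is codimension zero. The only cosmetic caveat is that $\hat r^{-1}(\r_m)$ is not literally a smooth hypersurface (distance functions are not smooth); the critical point theory only gives a homeomorphism of $\hat r^{-1}([0,\r_m])$ onto a compact smooth manifold with boundary, which is all you actually use.
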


Theorem~\ref{thm-intro: nonneg curv}
should be compared with the following results of Tanaka:
\begin{itemize}
\item
the set of poles in any $M_m$ is a closed metric ball centered at $o$
of some radius $R_p$ in $[0,\infty]$~\cite[Lemma 1.1]{Tan-cut}.\vspace{3pt} 
\item
$R_p>0$ if and only if 
$\int_1^\infty m^{-2}$ is finite and 
$\underset{r\to\infty}{\liminf}\, m(r)>0$~\cite{Tan-ball-poles-ex}. \vspace{3pt}
\item
if $M_m$ is von Mangoldt, then $R_p$
is a unique solution of an explicit integral 
equation~\cite[Theorem 2.1]{Tan-ball-poles-ex}.
\end{itemize}

It is natural to wonder when 
the set of poles equals $\Cr_m$, and we answer the question 
when $M_m$ is von Mangoldt.

\begin{thm} 
\label{intro-thm: poles}
If $M_m$ is a von Mangoldt plane, then
\begin{enumerate}\vspace{-5pt} 
\item[\textup{(a)}]
If $R_p$ is finite and positive, then the set of poles
is a proper subset of the component of $\Cr_m$ 
that contains $o$. 
\item[\textup{(b)}]
$R_p=0$ if and only if $\Cr_m=\{o\}$.
\end{enumerate}
\end{thm}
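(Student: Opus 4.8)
The plan is to analyze the two statements separately, using the characterizations of $R_p$ and $R_m$ (via Tanaka's integral equation and part (v) of Theorem~\ref{thm-intro: nonneg curv}) together with elementary facts about geodesic behavior in surfaces of revolution (Clairaut's relation, the fact that a geodesic leaving $o$ is a pole-geodesic iff it is a ray, etc.). Throughout I will repeatedly use that in a von Mangoldt plane $m$ is concave where $G_m\ge 0$ and convex where $G_m\le 0$, and that the monotonicity of $G_m$ forces a clean dichotomy between the "nonnegatively curved core" and what happens further out.

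For part (b), one direction is trivial: if $\Cr_m=\{o\}$ then in particular $o$ is the only pole (a pole is a critical point of infinity), so $R_p=0$. For the converse, suppose $R_p=0$. First I would observe that $R_p=0$ forces $G_m(0)>0$: indeed, if $G_m(0)\le 0$ then by the von Mangoldt condition $G_m\le 0$ everywhere, so $m$ is convex, every point is a pole, and $R_p=\infty$. So $G_m(0)>0$, and by Tanaka's criterion $R_p=0$ means that the relevant condition fails, i.e.\ either $\int_1^\infty m^{-2}=\infty$ or $\liminf_{r\to\infty}m(r)=0$. In the von Mangoldt case with $G_m(0)>0$ these two possibilities can be sorted out: if $\liminf m=0$ then $m'(\infty)<0$ (using concavity on the region where $G_m\ge 0$ and then convexity beyond), which by Theorem~\ref{thm-intro: nonneg curv}(iii) applied on a suitable comparison forces $R_m$ to be... — here I need to be careful, since Theorem~\ref{thm-intro: nonneg curv} assumes $G_m\ge 0$ \emph{everywhere}, which need not hold. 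So instead I would argue directly: $R_p=0$ means arbitrarily short geodesics from $o$ are not pole-geodesics, hence (via the same half-plane/Clairaut analysis that underlies Tanaka's criterion) arbitrarily short geodesics from $o$ are not rays; but a critical point of infinity $q\ne o$ would, by definition, have \emph{every} unit vector at $q$ within $\pi/2$ of some ray from $q$, and by rotational symmetry one can transport this to produce rays from $o$ through points arbitrarily close to $o$, contradicting $R_p=0$. This gives $\Cr_m=\{o\}$.

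For part (a), assume $0<R_p<\infty$. By Theorem~\ref{thm-intro: nonneg curv}(i) applied to the curvature (noting that $0<R_p<\infty$ forces $G_m(0)>0$ and forces the geometry near $o$ to look nonnegatively curved enough that the soul/critical-point-of-infinity ball is nondegenerate), $\Cr_m$ near $o$ is a closed ball of radius $R_m$, and I must show $R_p<R_m$ strictly, i.e.\ the pole-ball is a proper subset of the component of $\Cr_m$ containing $o$. The inclusion $R_p\le R_m$ is immediate since poles are critical points of infinity. For strictness, the point is that the integral equation for $R_p$ (Tanaka) and the integral equation for $R_m$ (part (v): $\int_x^\infty \frac{m(x)\,dr}{m(r)\sqrt{m^2(r)-m^2(x)}}=\pi$) are genuinely different equations whose solutions can be compared. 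Concretely, $x=R_p$ is characterized by the condition that the geodesic from $o$ with Clairaut parameter $m(R_p)$ "just barely" is a ray — its two halves meet at angle exactly... — whereas the Clairaut integral defining $R_m$ measures the total turning and equals $\pi$ exactly at $x=R_m$; for $x<R_p$ that integral exceeds $\pi$ (geodesics spiral) and for $x$ slightly larger than $R_p$ it is still $>\pi$ but for $x=R_m$ it equals $\pi$, and the monotonicity of $G_m$ guarantees the relevant integrand is monotone enough in $x$ that the two thresholds cannot coincide: at $x=R_p$ the geodesic, while a pole-geodesic in one sense, still has the half-geodesics returning toward the axis, so the turning integral is strictly greater than $\pi$. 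Hence $R_p<R_m$.

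The main obstacle will be making the comparison in part (a) rigorous — in particular, pinning down exactly which integral identity characterizes $R_p$ and proving the strict inequality between the $R_p$-threshold and the $R_m$-threshold, since both are defined by "a family of geodesics from $o$ transitions from non-rays to rays." The key technical input is the monotonicity of $G_m$, which should force the Clairaut turning integral $\theta(x):=\int_x^\infty \frac{m(x)\,dr}{m(r)\sqrt{m^2(r)-m^2(x)}}$ to be strictly decreasing in $x$ on the relevant range, so that it can equal $\pi$ at only one point ($=R_m$) and is strictly larger at $x=R_p$; simultaneously I will need the geometric fact that a geodesic from $o$ is a ray precisely when this turning from one side is $\ge\pi$ combined with the corresponding statement for the other side, which is where the distinction between "pole" (a local, infinitesimal ray condition giving $R_p$) and the actual ray/soul condition (giving $R_m$) enters. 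Once that monotonicity and the two geometric characterizations are in hand, both (a) and (b) follow quickly.
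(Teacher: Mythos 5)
Your approach diverges from the paper's and has genuine gaps in both parts.

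For (a), you invoke Theorem \ref{thm-intro: nonneg curv}(i) and the integral equation of part (v) to identify the component of $\Cr_m$ containing $o$ with a ball of radius $R_m$ solving $\int_x^\infty F_{m(x)}=\pi$. But those statements, as well as the monotonicity of the turn angle in $x$ (Proposition \ref{prop: G>0 and monotonicity}), require $G_m\ge 0$ \emph{everywhere}, which is not a hypothesis of (a) and is not forced by $0<R_p<\infty$: Theorem \ref{thm-intro: neg curv}(4) and Examples \ref{ex: m' vanishes}, \ref{ex: m' positive but non-connected} give von Mangoldt planes with $R_p>0$, some negative curvature, and disconnected $\Cr_m$. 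In that generality the turn angle is not monotone in $x$ and the $o$-component of $\Cr_m$ is not characterized by your equation, so the planned comparison of the two ``thresholds'' does not get off the ground; and even in the nonnegatively curved case you offer no actual argument for the strict inequality $R_p<R_m$. The paper avoids all of this with a short topological argument: the set of poles $P_m$ is a closed metric ball contained in the $o$-component $A_m^o$ of $A_m\cup\{o\}$, which is open because $A_m$ is open (Theorem \ref{thm-intro: away}); if $P_m$ were equal to $A_m^o$, the latter would be clopen, hence all of $M_m$, contradicting finiteness of $R_p$. Since $A_m^o$ lies in the $o$-component of $\Cr_m$, properness follows.

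For (b), the nontrivial direction is $R_p=0\Rightarrow\Cr_m=\{o\}$, and your step ``by rotational symmetry one can transport this to produce rays from $o$ through points arbitrarily close to $o$'' is not a valid deduction: rotations preserve the distance to $o$, so a ray from $q$ only yields rays from points on the parallel through $q$, not from points near $o$; and in any case rays through points near $o$ would not contradict $R_p=0$, since every meridian $\mu_p$ is a ray regardless of whether $p$ is a pole. The actual link is Tanaka's integral criteria, packaged in the paper as Lemma \ref{lem: crit is o}: if $q\in\Cr_m\,\mbox{--}\,\{o\}$ then $q$ emits a non-radial ray, which forces $\int_1^\infty m^{-2}<\infty$ and $\liminf_{r\to\infty}m>0$, and these two conditions imply $R_p>0$. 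Your proposal never supplies this implication.
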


Of course $R_p=\infty$ implies $\Cr_m=M_m$, but
the converse is not true: 
Theorem~\ref{thm-intro: slope realized} ensures
the existence of a von Mangoldt plane with 
$m^\prime(\infty)=\frac{1}{2}$ and $G_m\ge 0$, and
for this plane $\Cr_m=M_m$ by
Theorem~\ref{thm-intro: nonneg curv}, while $R_p$ is finite by
Remark~\ref{rmk: poles when m'=.5}.

We say that a ray $\g$ in $M_m$ {\it points away from infinity\,}
if $\g$ and the segment $[\g(0), o\,]$ make an angle $<\frac{\pi}{2}$ 
at $\g(0)$.
Define $A_m\subset M_m\,\mbox{--}\,\{o\}$ as follows: $q\in A_m$
if and only if there is a ray that starts at $q$ and points 
away from infinity; by symmetry, $A_m\subset\Cr_m$.

\begin{thm}
\label{thm-intro: away}
If $M_m$ is a von Mangoldt plane, then $A_m$ is open in $M_m$.
\end{thm}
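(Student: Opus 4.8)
The plan is to show that the complement $\Cr_m\,\mbox{--}\,A_m$ together with $M_m\,\mbox{--}\,\Cr_m$ forms a closed set, i.e.\ that the set of points admitting a ray pointing away from infinity is stable under small perturbations. Fix $q\in A_m$ and a ray $\g$ starting at $q$ that makes an angle $\th_0<\frac{\pi}{2}$ with the segment $[q,o\,]$. Because $M_m$ is rotationally symmetric, rays are governed by the Clairaut relation $m(r)\sin\phi = \text{const}$, where $\phi$ is the angle the geodesic makes with the meridian through the point; so the qualitative behavior of $\g$ (whether it escapes to infinity monotonically in $r$, and from which side it approaches $q$) is encoded by this single invariant together with the initial data at $q$. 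The first step is therefore to record, using the von Mangoldt hypothesis, the standard facts about geodesics and rays in $M_m$: since $G_m$ is non-increasing, $m'$ is non-decreasing wherever $m$ is increasing, the function $m$ has at most one critical point, and a meridian-transverse geodesic that starts going ``outward'' (with $r$ increasing) either stays a ray forever or has a unique point closest to $o$ after which $r$ increases monotonically to $\infty$. In particular the ``ray-ness'' of a geodesic is detected by a comparison of its Clairaut constant against $\liminf_{r\to\infty} m(r) =: m_\infty$.

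Next I would set up the openness argument directly. Given $q\in A_m$ with its ray $\g$ pointing away from infinity, I want to produce, for every $q'$ near $q$, a ray $\g'$ from $q'$ pointing away from infinity. The natural candidate is the geodesic from $q'$ with the ``same'' Clairaut data, adjusted continuously: parametrize geodesics from a neighborhood of $q$ by their initial direction and track the Clairaut constant $c(q',v) = m(r(q'))\sin\phi(q',v)$. The condition ``points away from infinity'' is the open condition that the angle with $[q',o\,]$ is $<\frac{\pi}{2}$, equivalently that $r$ is initially increasing along the geodesic (or, in the degenerate meridian case, that it runs straight out); this is manifestly an open condition on $(q',v)$. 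The real content is that being a \emph{ray} is also open here — i.e.\ that we can keep the geodesic globally minimizing after perturbation. For this I would use the characterization of rays in von Mangoldt planes in terms of the Clairaut constant: a geodesic emanating ``outward'' from $q$ is a ray if and only if its Clairaut constant $c$ satisfies $c \le m_\infty$ (this uses that $M_m$ is von Mangoldt, so there is no oscillation of $m$ that could create a second, closer, intersection with a parallel). If $\g$ has $c<m_\infty$ strictly, then a small perturbation of $q$ and of the direction keeps $c<m_\infty$, and the corresponding geodesic is still an outward ray, giving an open neighborhood of $q$ inside $A_m$. If $\g$ sits exactly at $c=m_\infty$, I would instead perturb the direction slightly ``inward'' to strictly decrease $c$ while keeping the initial angle with $[q',o\,]$ still below $\frac{\pi}{2}$ (possible since $\th_0<\frac{\pi}{2}$ leaves room), obtaining a nearby honest ray that still points away from infinity.

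The main obstacle I expect is the boundary case $c = m_\infty$ together with the possibility that $m$ is \emph{not} eventually monotone or that $m_\infty$ is attained — here the clean dichotomy ``ray $\iff$ $c\le m_\infty$'' needs care, and one must rule out that a perturbed geodesic with $c$ slightly below $m_\infty$ nonetheless fails to minimize because it first turns around and re-intersects an earlier parallel. Controlling this requires the monotonicity consequences of the von Mangoldt condition: I would show that for a von Mangoldt plane the geodesic with Clairaut constant $c<m_\infty$, once it is going outward, has $r$ strictly increasing forever and is minimizing between any two of its points, by a standard first-variation / index-form comparison against the model, exactly as in the proof of Theorem~\ref{thm-intro: nonneg curv}. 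With that monotonicity in hand, the perturbation is routine: the set of $(q',v)$ with $q'$ near $q$, initial angle with $[q',o\,]$ less than $\frac{\pi}{2}$, and $c(q',v) < m_\infty$ is open and nonempty, and projects onto a neighborhood of $q$ contained in $A_m$. Running this for every $q\in A_m$ shows $A_m$ is open.
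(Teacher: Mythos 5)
Your argument has a fatal gap at its central step: the claimed characterization ``a geodesic emanating outward from $q$ is a ray if and only if its Clairaut constant $c$ satisfies $c\le m_\infty$'' is false. That condition only guarantees that the geodesic is \emph{escaping}, i.e.\ that $r$ eventually increases monotonically to infinity along it; it does not make the geodesic globally minimizing. A concrete counterexample is any paraboloid: there $\int_1^\infty m^{-2}=\infty$, so by Tanaka's results the only ray emanating from a point $q\neq o$ is the meridian $\mu_q$, and in particular $\Cr_m=\{o\}$ and $A_m=\emptyset$; yet every outward geodesic from $q$ with initial angle $\k\in(0,\frac{\pi}{2}]$ to $\mu_q$ has Clairaut constant $m(r_q)\sin\k<\infty=m_\infty$ and is escaping with $r$ strictly increasing after its tangency point. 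The index-form/first-variation comparison you invoke cannot rescue this, because the failure of minimality is not caused by conjugate or focal points along the geodesic: the geodesic simply wraps too far around the origin and is beaten by a competitor passing on the other side. Monotonicity of $r$ plus absence of conjugate points never implies global minimality on a surface of revolution.

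What the von Mangoldt hypothesis actually buys --- and what the paper's proof is built on --- is that the cut locus of any $q\neq o$ lies on the meridian opposite $q$, whence a geodesic avoiding $o$ is a ray if and only if its turn angle $T_\g=\int_\g d\th$ is at most $\pi$ (Lemma~\ref{lem: ray iff at most pi}). So the quantity you must control under perturbation of $q$ is the turn angle, not the Clairaut constant alone. Two further ingredients are then needed and are absent from your sketch: first, for $q\in A_m$ one needs the \emph{strict} inequality $T_{\g_q}<\pi$, which is Theorem~\ref{thm: max devia vs turn angle} and rests on the continuity and strict monotonicity of the turn angle as a function of the initial angle near $\frac{\pi}{2}$ (Lemma~\ref{lem: turn angle cont and diff}, via the calculus Lemma~\ref{lem: calc}); second, one needs continuity of $u\mapsto T_{\g_u}$ where it is finite (Lemma~\ref{lem: turn angle of gamma_q is continuous}, proved by splitting the improper integral $\int_{r_u}^\infty F_{m(r_u)}$ near the tangency point and near infinity). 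With these, $A_m=\{u:T_{\g_u}<\pi\}$ is open. As written, your proposal shows at most that the set of points admitting an \emph{escaping} geodesic pointing away from infinity is open, which is a strictly weaker and, on the paraboloid, genuinely different set. (Minor additional slips: $m''=-G_m m$ gives that $m'$ is non-decreasing where $G_m\le 0$, not ``wherever $m$ is increasing''; and $m$ on a von Mangoldt plane can have two critical points, so the asserted dichotomy for meridian-transverse geodesics also needs the turn-angle machinery rather than monotonicity of $m$.)
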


Any plane $M_m$ with $G_m\ge 0$ has another distinguished subset, namely
the set of souls, i.e. points produced via the soul construction of
Cheeger-Gromoll. 

\begin{thm}
\label{thm-intro: crit is soul}
If $G_m\ge 0$, then
$\Cr_m$ is equal to the set of souls of $M_m$.
\end{thm}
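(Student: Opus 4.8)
The plan is to show a two-way containment between the set of souls and $\Cr_m$, leaning on Theorem~\ref{thm-intro: nonneg curv}(i), which already identifies $\Cr_m$ with a closed metric ball $\bar B(o,R_m)$. Recall that the Cheeger--Gromoll soul construction in a nonnegatively curved plane proceeds as follows: starting from any point $p$, one forms the set of points on or ``inside'' all rays emanating from $p$ — equivalently one takes the intersection $C_p$ of all closed half-planes bounded by rays from $p$ — and then iterates, passing to totally convex sublevel sets of Busemann functions, until one reaches a compact totally convex set with empty boundary, which for a surface is a point (the soul) or a closed geodesic (impossible here, since a line would force the plane to split). So a point $s$ is a soul exactly when it survives this process for some choice of basepoint $p$; in particular every soul is a critical point of infinity, because at a point of positive reach inside all the half-planes every direction makes angle $\le\pi/2$ with some ray. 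This gives the inclusion: set of souls $\subseteq \Cr_m$.

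For the reverse inclusion I would argue that every point $q$ with $r(q)\le R_m$ arises as a soul. The cleanest route uses rotational symmetry: by Theorem~\ref{thm-intro: nonneg curv}(i), $q\in\Cr_m$ means each unit vector at $q$ makes angle $\le\pi/2$ with some ray from $q$. I want to produce a basepoint $p$ whose soul construction terminates precisely at $q$. Take $p=q$ itself. The first step of the construction intersects the half-planes determined by all rays from $q$; I claim this intersection is already $\{q\}$. Indeed, if it contained another point $x$, then the minimizing geodesic from $q$ toward $x$ would make angle $\le\pi/2$ with every ray from $q$, so its reverse direction at $q$ makes angle $\ge\pi/2$ with every ray from $q$; but $q\in\Cr_m$ forces that reverse direction to make angle $\le\pi/2$ with some ray, hence angle exactly $\pi/2$ with a ray $\gamma$. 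By the symmetry of $M_m$ (reflection across the meridian through $q$) there are then two rays from $q$ at angle exactly $\pi/2$ to the direction $qx$, one on each side, and convexity of the intersection of the two corresponding half-planes pins $x$ to the geodesic orthogonal to $qx$ at $q$ — iterating with rays in nearby directions then shows $x=q$. Thus the soul construction based at $q$ stops immediately, certifying $q$ as a soul.

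The main obstacle is the boundary case $r(q)=R_m$ and, more seriously, making the ``two rays at angle exactly $\pi/2$'' dichotomy precise when $R_m=\infty$ (so $\Cr_m=M_m$), where one must handle points arbitrarily far out and rays that need not point away from infinity. Here I expect to use part (iv)/(v) machinery only indirectly: the key structural input is that in a von Mangoldt plane — or more generally using Theorem~\ref{thm-intro: away} and the description of $A_m$ — through every point of $\Cr_m\setminus\{o\}$ there pass rays in a closed arc of directions symmetric about the meridian and of angular width $\ge\pi$, which is exactly what is needed to trap the soul construction at that single point. A secondary technical point is to confirm that the later stages of the Cheeger--Gromoll construction (intersecting with Busemann sublevel sets) do not move the soul off $q$ once the first stage has collapsed to $\{q\}$; but this is automatic, since once a stage yields a single point the construction terminates there. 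Assembling these, $\{$souls$\}=\Cr_m$ as claimed.
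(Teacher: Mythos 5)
Your easy direction (souls $\subseteq\Cr_m$) is fine and agrees with the remark in the introduction. The reverse inclusion, however, has a genuine gap: the claim that the soul construction based at $q$ itself always collapses to $\{q\}$ in one step is false, and the failure is exactly at the case you flag as ``the main obstacle'' but do not resolve. When $\hat\k(r_q)=\frac{\pi}{2}$, the arc of ray directions at $q$ has angular width exactly $\pi$, with $\g_q$ and its reflection $\bar\g_q$ as the extreme rays. The inward radial direction then makes angle exactly $\frac{\pi}{2}$ with $\g_q$ and $\bar\g_q$ and angle $\pi$ with $\mu_q$, so the ``acute angle implies the point enters the horoball'' lemma says nothing about the segment $[o,q)$, and there are no rays in ``nearby directions'' to iterate with, since $\g_q$ is already extremal. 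An arc of width $\ge\pi$ is not ``exactly what is needed'': width $>\pi$ suffices (this is Case~1 of the paper's proof), width $=\pi$ does not. To push $[o,q)$ into the horoball $B_{\g_q}$ one needs a second-order argument; the paper does this by observing that when $G_m$ does not vanish identically along $\g_q$, the Jacobi field $j$ with $j(0)=1$, $j^\prime(0)=0$ is concave and must vanish, producing a focal point of the meridian along $\g_q$.

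Worse, when $G_m\equiv 0$ along $\g_q$ (so $M_m$ is a flat cone outside a compact set), the complement $C_q$ of the horoballs is genuinely a nondegenerate meridian segment $[u,q]$, and the soul produced from basepoint $q$ is its \emph{midpoint}, not $q$. So ``take $p=q$'' fails outright, yet such $q$ lies in $\Cr_m$ and must still be realized as a soul from some other basepoint. The paper handles this by letting the basepoint $v$ run along the meridian and showing that the midpoint map $y\mapsto h(y)=\frac{y+e(y)}{2}$ is continuous (via uniform convergence of the rays $\g_v$ and pointwise convergence of their Busemann functions) and hence surjective onto $(0,\infty)$ by the intermediate value theorem. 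This focal-point analysis and the continuity/IVT step constitute the core of the proof and are absent from your proposal.
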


Recall that the soul construction takes as input
a basepoint in an open complete manifold $N$ of nonnegative sectional 
curvature and produces a compact totally convex submanifold $S$
without boundary, called a {\it soul}, such that $N$ is diffeomorphic
to the normal bundle to $S$. Thus if $N$ is contractible,
as happens for $M_m$, then $S$ is a point.
The soul construction also gives a
continuous family of compact totally convex subsets
that starts with $S$ and ends with $N$, and 
according to~\cite[Proposition 3.7]{Men}
$q\in N$ is a critical point of infinity if and only if 
there is a soul construction such that
the associated continuous family of totally convex sets
drops in dimension at $q$.
In particular, any point of $S$ is a critical point of infinity,
which can also be seen directly; see the proof 
of~\cite[Lemma 1]{Mae-2rays}. In Theorem~\ref{thm-intro: crit is soul} 
we prove conversely that every point of $\Cr_m$ is a soul;
for this $M_m$ need not be von Mangoldt.


In regard to part (iii) of Theorem~\ref{thm-intro: nonneg curv},
it is worth mentioning $G_m\ge 0$ implies that $m^\prime$ 
is non-increasing, so $m^\prime(\infty)$ exists,
and moreover, $m^\prime(\infty)\in [0,1]$ because $m\ge 0$.
As we note in Remark~\ref{rmk: app, tot curv}
for any von Mangoldt plane $M_m$, the limit 
$m^\prime(\infty)$ exists as a number in $[0,\infty]$.  
It follows that any $M_m$ with $G_m\ge 0$, and
any von Mangoldt plane $M_m$ admits
total curvature, which equals $2\pi(1-m^\prime(\infty))$
and hence takes values in $[-\infty, 2\pi]$; thus 
$m^\prime(\infty)=\frac{1}{2}$ if and only if $M_m$ 
has total curvature $\pi$.
Standard examples of von Mangoldt planes of positive curvature
are the one-parametric family of paraboloids, 
all satisfying $m^\prime(\infty)=0$~\cite[Example 2.1.4]{SST}, 
and the one-parametric family of two-sheeted hyperboloids
parametrized by $m^\prime(\infty)$,
which takes every value in $(0,1)$~\cite[Example 2.1.4]{SST}.

A property of von Mangoldt planes, discovered 
in~\cite{Ele, Tan-cut} and crucial to this paper, is
that the cut locus of any $q\in M_m\,\mbox{--}\, \{o\}$ is a ray 
that lies on the meridian opposite $q$. 
(If $M_m$ is not von Mangoldt, its cut locus is not fully
understood, but
it definitely can be disconnected~\cite[page 266]{Tan-ball-poles-ex},
and known examples of cut loci of compact
surfaces of revolution~\cite{GluSin, SinTan-experim} suggest that it
could be complicated). 

As we note in Lemma~\ref{lem: crit point and ray tangent to parallel},
if $M_m$ is a von Mangoldt plane, and if $q\neq o$, 
then $q\in\Cr_m$ if and only if the geodesic tangent 
to the parallel through $q$ is a ray.
Combined with Clairaut's relation 
this gives the following  
``choking'' obstruction for a point 
$q$ to belong to $\Cr_m$
(see Lemma~\ref{lem: basic choke}):

\begin{prop}
\label{prop-intro: choking} 
If $M_m$ is von Mangoldt and $q\in \Cr_m$, then 
$m^\prime(r_q)> 0$ and $m(r)>m(r_q)$ for $r>r_q$,
where $r_q$ is the $r$-coordinate of $q$.
\end{prop}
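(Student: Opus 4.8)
The plan is to combine the characterization of $\Cr_m$ cited just before the statement with Clairaut's relation for geodesics on a surface of revolution. By Lemma \ref{lem: crit point and ray tangent to parallel}, since $M_m$ is von Mangoldt and $q \neq o$, the point $q \in \Cr_m$ means the geodesic $\g$ that is tangent at $q$ to the parallel $\{r = r_q\}$ is a ray. I would fix this $\g$ with $\g(0) = q$, parametrized by arclength, and record its Clairaut invariant. Since $\g$ is tangent to the parallel at $q$, the angle between $\g$ and the meridian at $q$ is $\pi/2$, so the Clairaut constant is $c = m(r_q)$. For any other point $\g(t)$, writing $r(t)$ for its $r$-coordinate and $\phi(t)$ for the angle $\g$ makes with the meridian, Clairaut gives $m(r(t))\sin\phi(t) = m(r_q)$, hence $m(r(t)) \ge m(r_q)$ for all $t$.

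Next I would argue that $r(t) > r_q$ for small $t \neq 0$, i.e. that $\g$ moves into the region $\{r > r_q\}$ rather than staying on or crossing below the parallel. Because $\g$ is tangent to the parallel at $q$, we have $r'(0) = 0$; differentiating the first integral $r'^2 = 1 - c^2/m^2(r)$ (or using the geodesic equation directly) shows $r''(0)$ has the sign of $-G_m(r_q)\,m(r_q)/\dots$ — more precisely $\g$ curves toward increasing $r$ precisely when $m''(r_q) < 0$, i.e. when $G_m(r_q) > 0$. So I first need $G_m(r_q) > 0$: if instead $G_m(r_q) \le 0$, the tangent geodesic would either stay on the parallel (impossible unless the parallel is itself a geodesic, which forces $m'(r_q)=0$ and then one checks it is not a ray in the von Mangoldt nonnegatively-curved-near-$r_q$ situation) or bend toward the origin and fail to be a ray, since it would re-enter a compact region. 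This rules out $G_m(r_q) \le 0$ and simultaneously gives $r(t) > r_q$ for $t$ near $0$, whence by the tangency and the structure of the first integral $r(t)$ increases monotonically away from $q$ in at least one direction, so $\sin\phi(t) < 1$ strictly there and Clairaut upgrades to the strict inequality $m(r) > m(r_q)$ for $r > r_q$.

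Finally, $m'(r_q) > 0$: on $\{r > r_q\}$ we have just shown $m(r) > m(r_q) = m(r(0))$, and $r(t)$ increases through these values, so $m$ is larger than $m(r_q)$ immediately past $r_q$; combined with smoothness this forces $m'(r_q) \ge 0$, and $m'(r_q) = 0$ is excluded because it would make the parallel at $r_q$ a geodesic — but a geodesic that is a closed curve (a parallel) cannot be a ray, contradicting $q \in \Cr_m$ via Lemma \ref{lem: crit point and ray tangent to parallel}. Hence $m'(r_q) > 0$.

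The main obstacle I anticipate is the second step: cleanly ruling out $G_m(r_q) \le 0$ and showing the tangent geodesic genuinely escapes into $\{r > r_q\}$. The delicate case is $G_m(r_q) = 0$, where the second-order test is inconclusive and one must use that $M_m$ is von Mangoldt — so $G_m$ is non-increasing, forcing $G_m \le 0$ for all $r \ge r_q$ — to see that the geodesic tangent to the parallel at $q$ is trapped (it behaves like a geodesic in a flat or concave region and spirals or returns, so it cannot be a ray). This is presumably where Lemma \ref{lem: basic choke}, which the excerpt says the proof invokes, does the real work; I would isolate that ``choking'' mechanism as the technical heart and quote it.
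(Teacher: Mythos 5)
Your overall skeleton is the paper's: reduce to the statement that the tangent geodesic $\g_q$ is a ray via Lemma~\ref{lem: crit point and ray tangent to parallel}, then extract the conclusion from Clairaut's relation with constant $c=m(r_q)$ (this is exactly Lemma~\ref{lem: basic choke}). But the mechanism you propose for the ``real work'' is wrong. You claim that $\g_q$ curves toward increasing $r$ precisely when $G_m(r_q)>0$, and that you therefore ``first need $G_m(r_q)>0$.'' The geodesic equation (\ref{form: geod eq}) gives $\ddot r = m\,m'\,\dot\th^2$, so at the tangency point $\ddot r(0)=m'(r_q)/m(r_q)$: the bending of $\g_q$ relative to the parallel is governed by the sign of $m'(r_q)$, not of $G_m(r_q)=-m''/m$. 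Trying to decide this sign first is circular (it is the conclusion), and the claim that $G_m(r_q)>0$ is necessary is simply false: by Theorem~\ref{thm-intro: neg curv}(3) there are von Mangoldt planes in which every point outside a compact set lies in $A_m\subset\Cr_m$ while $G_m<0$ there. So the entire second paragraph of your argument, including the case analysis on $G_m(r_q)\le 0$, does not work.

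The correct argument is shorter and avoids any second-order analysis. A ray is escaping, so $s\mapsto r_{\g_q(s)}$ is unbounded; since it starts at $r_q$, its image contains $[r_q,\infty)$ by the intermediate value theorem. By Fact~\ref{fact: on escaping geodesics}, an escaping geodesic is tangent to a parallel at most once, so $q$ is the only tangency point, i.e.\ $\sin\k_{\g_q(s)}<1$ for $s>0$. Clairaut's relation $m(r_{\g_q(s)})\sin\k_{\g_q(s)}=m(r_q)$ then gives $m(r)>m(r_q)$ for \emph{every} $r>r_q$ (your version only controls $m$ at $r$-values you have shown the geodesic to visit, which is why you were forced into the local bending analysis). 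This strict inequality on a right neighborhood of $r_q$ yields $m'(r_q)\ge 0$, and $m'(r_q)=0$ is excluded because then the parallel through $q$ would be a geodesic coinciding with $\g_q$ by uniqueness, contradicting that $\g_q$ is escaping — this last step you do have correctly.
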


The above proposition is immediate from Lemmas~\ref{lem: basic choke}
and \ref{lem: crit point and ray tangent to parallel}.
We also show in Lemma~\ref{lem: m growth} that if
$M_m$ is von Mangoldt and $\Cr_m\neq o$, then
there is $\r$ such that $m(r)$ is increasing and 
unbounded on $[\r,\infty)$.

The following theorem collects most of what we know about $\Cr_m$ for
a von Mangoldt plane $M_m$ with some negative curvature, where
the case $\underset{r\to\infty}{\liminf}\, m(r)=0$
is excluded because then $\Cr_m=\{o\}$ by 
Proposition~\ref{prop-intro: choking}.

\begin{thm}
\label{thm-intro: neg curv}
If $M_m$ is a von Mangoldt plane with a point where $G_m<0$
and such that $\underset{r\to\infty}{\liminf}\, m(r)>0$, then 
\begin{enumerate}\vspace{-5pt} 
\item[\textup{(1)}]
$M_m$ contains a line and has total curvature $-\infty$;
\item[\textup{(2)}]
if $m^\prime$ has a zero, then neither $A_m$ nor $\Cr_m$ is connected;
\item[\textup{(3)}]
$M_m\,\mbox{--}\, A_m$ is a bounded subset of $M_m$;
\item[\textup{(4)}]
the ball of poles of $M_m$ has positive radius.
\end{enumerate}
\end{thm}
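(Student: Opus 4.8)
The plan is to first pin down how $m$ behaves near infinity; all four assertions then follow without much trouble. Since $G_m=-m''/m$ is non-increasing and is negative at some radius $r_0$, we have $G_m\le G_m(r_0)=:-c<0$ on $[r_0,\infty)$, and combining this with $\liminf_{r\to\infty}m>0$ produces $\delta>0$ and $r_2$ with $m''=-G_mm\ge c\delta>0$ on $[r_2,\infty)$. Hence $m'\to+\infty$: in particular $m$ grows at least quadratically, and there is $b^*$ with $m'>1$, hence $m$ strictly increasing, on $[b^*,\infty)$. Two of the claims fall out immediately: $m'(\infty)=+\infty$, so by Remark~\ref{rmk: app, tot curv} the total curvature is $2\pi(1-m'(\infty))=-\infty$, which is the second half of (1); and the quadratic growth makes $\int_1^\infty m^{-2}$ finite, so together with $\liminf m>0$ Tanaka's theorem~\cite{Tan-ball-poles-ex} gives $R_p>0$, which is (4).

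To produce the line for (1), I would take the geodesic $\g_a$ tangent to the parallel $r=a$ for any fixed $a\ge b^*$. At the tangency point $r''=m'(a)/m(a)>0$, so $r=a$ is a strict minimum of $r$ along $\g_a$ — global, since $m$ is strictly increasing on $[a,\infty)$ — and $\g_a$ runs off to $r=\infty$ on both sides, sweeping total angle $2\b(a)$ with $\b(a)=\int_a^\infty\frac{m(a)\,dr}{m(r)\sqrt{m(r)^2-m(a)^2}}$. The substitution $u=m(a)/m(r)$ rewrites this as $\b(a)=\int_0^1\frac{du}{m'(r(u))\sqrt{1-u^2}}$ with $r(u)\ge a$, and $m'>1$ on $[a,\infty)$ then forces $\b(a)<\frac{\pi}{2}$. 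Consequently $\g_a$ is disjoint from the meridian antipodal to each of its own points; since $M_m$ is von Mangoldt, the cut locus of any point other than $o$ is a subray of the antipodal meridian~\cite{Ele, Tan-cut}, so no subarc of $\g_a$ reaches the cut locus of an endpoint, i.e.\ every subarc is minimizing and $\g_a$ is a line.

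For (3), I would show $M_m\,\mbox{--}\,A_m\subset\{r\le b^*\}$; by rotational symmetry of $A_m$ it suffices to put every $q$ with $r_q=a>b^*$ into $A_m$. Pick $b$ with $b^*\le b<a$, and let $\g$ be the geodesic from $q$ that initially decreases $r$ and is tangent to the parallel $r=b$. By Clairaut's relation $\g$ leaves $q$ at angle $\arcsin(m(b)/m(a))<\frac{\pi}{2}$ from the segment $[q,o]$, so it points away from infinity; it dips to $r=b$, bounces there (as $m'(b)>0$), and then escapes to $r=\infty$, sweeping total angle $\a+\b(b)$ with $\a=\int_b^a\frac{m(b)\,dr}{m(r)\sqrt{m(r)^2-m(b)^2}}\le\b(b)<\frac{\pi}{2}$, so $\a+\b(b)<\pi$. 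Hence $\g$ misses the meridian antipodal to $q$, misses the cut locus of $q$, and is therefore a ray; so $q\in A_m$.

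Finally, for (2), suppose $m'(z)=0$; since $m'(0)=1$ we have $z>0$. By Proposition~\ref{prop-intro: choking} no point of $\Cr_m$ has $r$-coordinate $z$, so $\Cr_m\subset\{r<z\}\sqcup\{r>z\}$; the left piece contains $o$, and by (3) the right piece contains points of $A_m$ with $r>z$, so $\Cr_m$ is disconnected. For $A_m$: by (4) there is a pole $q\ne o$, and the closed ball $\{r\le R_p\}$ of poles~\cite{Tan-cut} cannot contain the circle $r=z$ (its points would be poles and hence lie in $\Cr_m$), so $R_p<z$; a pole has empty cut locus, so every geodesic issuing from it is a ray, in particular the geodesic through $o$, which points away from infinity. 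Hence every pole other than $o$ lies in $A_m$, so $A_m$ meets $\{r<z\}$ in $\{0<r\le R_p\}$, meets $\{r>z\}$ by (3), and is disjoint from $\{r=z\}$; thus $A_m$ is disconnected. I expect the most delicate step to be the Clairaut bookkeeping in the middle two paragraphs — justifying the bounce at the tangency point and the uniform estimate $\a\le\b(b)<\frac{\pi}{2}$ — while the genuinely new observation is that a pole, having empty cut locus, lies in $A_m$.
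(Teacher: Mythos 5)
Your proof is correct, and it takes a genuinely different route from the paper's. The paper derives (1) and (3) from the Cohn--Vossen-type theorem of Shiohama--Shioya--Tanaka: since the total curvature of $M_m$ is $-\infty$, some fixed ball $B$ about $o$ has negative total curvature, and because the complement of the union of all rays emanating from any point $q$ has total curvature in $[0,2\pi]$, every $q$ emits a ray meeting $B$; for $q\notin B$ such a ray points away from infinity (giving (3)), and letting $q\to\infty$ these rays subconverge to a line (giving (1)). You instead extract the quantitative consequence $m''\ge \mathrm{const}>0$ near infinity, hence $m'>1$ on some $[b^*,\infty)$, and the substitution $u=m(a)/m(r)$ bounds the half-turn angle of a geodesic tangent to a distant parallel by $\frac{\pi}{2}$; Tanaka's description of the cut locus (in the form of Lemma~\ref{lem: ray iff at most pi}) then converts these turn-angle bounds directly into the line of (1) and the rays of (3), with the explicit conclusion $M_m\setminus A_m\subset\{r\le b^*\}$. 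Parts (2) and (4) are handled essentially as in the paper (a parallel that is a geodesic but not a ray disconnects $\Cr_m$ via Proposition~\ref{prop-intro: choking}; Tanaka's criterion gives $R_p>0$), though your observation that every pole other than $o$ lies in $A_m$ --- needed to exhibit a point of $A_m$ inside the circle $r=z$ --- makes explicit something the paper leaves implicit in its phrase ``the same argument.'' The trade-off: the paper's argument is shorter but imports the total-curvature machinery of \cite{SST}, while yours is longer but elementary (Clairaut's relation, the turn-angle integral, and Tanaka's two theorems) and yields quantitative control, in the spirit of the paper's own Theorem~\ref{thm: neck}, on the region that $A_m$ must contain.
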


In Example~\ref{ex: m' vanishes} we construct a von Mangoldt plane
$M_m$ to which part (2) of Theorem~\ref{thm-intro: neg curv} applies.
In Example~\ref{ex: m' positive but non-connected} we produce
a von Mangoldt plane $M_m$ such that neither $A_m$ nor $\Cr_m$ is connected
while $m^\prime>0$ everywhere.
We do not know whether there is a von Mangoldt plane 
such that $\Cr_m$ has more than two connected components.

Because of Lemma~\ref{lem-intro: crit} and 
Corollary~\ref{cor-intro: crit pt theory}, one is
interested in subintervals of $(0, \infty)$ 
that are disjoint from $r(\Cr_m)$,
as e.g. happens for any interval on which $m^\prime\le 0$, 
or for the interval $(R_m, \infty)$ in Theorem~\ref{thm-intro: nonneg curv}.
To this end we prove the following result, which
is a consequence of Theorem~\ref{thm: neck}.

\begin{thm}\label{intro-thm: neck nneg}
Let $M_n$ be a von Mangoldt plane with $G_n\ge 0$,
$n(\infty)=\infty$, and such that $n^\prime(x)<\frac{1}{2}$ for some $x$. 
Then for any $z>x$ there exists $y>z$ such that 
if $M_m$ is a von Mangoldt plane with
$n=m$ on $[0, y]$, then $r(\Cr_m)$ and $[x,z]$ are disjoint.
\end{thm}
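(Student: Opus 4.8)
The idea is to exploit Proposition~\ref{prop-intro: choking}: if $q\in\Cr_m$ with $r$-coordinate $r_q$, then $m(r)>m(r_q)$ for all $r>r_q$. So to force $[x,z]\cap r(\Cr_m)=\varnothing$ it suffices to build, for the given von Mangoldt $M_n$, a cutoff radius $y>z$ with the property that \emph{whatever} von Mangoldt $M_m$ agrees with $M_n$ on $[0,y]$, the warping function $m$ dips below $m(t)=n(t)$ at some radius $>t$, for every $t\in[x,z]$. Since $m=n$ on $[0,y]$ and $z<y$, it is enough to arrange $\min_{[z,y]} n < \min_{[x,z]} n$; then for any $t\in[x,z]$ the value $n$ attains at the interior minimum on $[z,y]$ is a value of $m$ strictly below $m(t)$ occurring at a radius $>t$, contradicting Proposition~\ref{prop-intro: choking}. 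Wait—this needs $n$ to actually \emph{come back down} on $(z,y)$, which is exactly what the hypothesis $n'(x)<\tfrac12$ together with $G_n\ge0$ and $n(\infty)=\infty$ should be made to deliver.

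Concretely, first I would recall (as in the discussion around Theorem~\ref{thm-intro: nonneg curv}(iii) and Lemma~\ref{lem: m growth}) that $G_n\ge0$ makes $n'$ non-increasing, so $n'(\infty)=\lim n'$ exists and equals $\inf n'$; since $n'(x)<\tfrac12$ we get $n'(\infty)\le n'(x)<\tfrac12<1$. Second, I would invoke Theorem~\ref{thm-intro: nonneg curv}(iii): $n'(\infty)<\tfrac12$ forces $R_n<\infty$, so $r(\Cr_n)=[0,R_n]$ is bounded, and by Proposition~\ref{prop-intro: choking} applied contrapositively, for every $t>R_n$ there is some $r>t$ with $n(r)\le n(t)$ (indeed $n$ cannot be eventually monotone increasing from $t$ on, or else $t$ would satisfy the choking conclusion). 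Combined with $n(\infty)=\infty$ this means $n$ oscillates in a controlled way: it is not eventually monotone, yet unbounded. Third—and this is the technical heart—I must produce $y>z$ with $\min_{[z,y]}n<\min_{[x,z]}n=:c$. The point is that $c>0$ (as $\liminf n>0$ here, or just because $n>0$ on the compact $[x,z]$), and because $n$ is not eventually increasing past $\max(z,R_n)$ there is a radius $w>z$ where $n$ has a strict local max followed by a descent; continuing, since $n$ dips below the value of any earlier non-critical point, I can find $w'>z$ with $n(w')<c$. Set $y=w'$ (or slightly larger so that $y>z$ strictly and $n(y)<c$ as well, to be safe). Finally, given any von Mangoldt $M_m$ with $n=m$ on $[0,y]$: for $t\in[x,z]$, the radius $r^*\in(z,y)$ realizing $\min_{[z,y]}n$ satisfies $r^*>t$ and $m(r^*)=n(r^*)<c\le n(t)=m(t)$, so by Proposition~\ref{prop-intro: choking} $t\notin r(\Cr_m)$. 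Hence $[x,z]\cap r(\Cr_m)=\varnothing$.

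I expect the main obstacle to be Step three: pinning down \emph{why} $n'(\infty)<\tfrac12$ plus $n(\infty)=\infty$ forces $n$ to drop below the prescribed level $c$ at arbitrarily \emph{chosen-in-advance} radii. The subtlety is that $n$ unbounded and $n$ not-eventually-monotone together still a priori allow $n$ to stay above $c$ forever (oscillating between, say, $c+1$ and large values). To rule this out I would argue via the \emph{total curvature} / Clairaut machinery behind Proposition~\ref{prop-intro: choking}: the relevant statement is really that for $t\notin r(\Cr_n)$ the geodesic tangent to the parallel at radius $t$ is \emph{not} a ray (Lemma~\ref{lem: crit point and ray tangent to parallel}), i.e. it has a cut point, which by the von Mangoldt cut-locus description happens precisely because $m(r)$ returns to the level $m(t)$ at some finite $r>t$. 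So "not eventually monotone above $c$" is the wrong phrasing — the correct and available fact is: for every $t>R_n$, there exists $r>t$ with $n(r)=n(t)$. Feeding $t=x$ (assuming, as we may after possibly enlarging $x$, that $x>R_n$; if $x\le R_n$ then $[x,z]$ meets $r(\Cr_n)$ and we instead shrink to the part beyond $R_n$, or just note the theorem's interest is for $x$ beyond the pole-ball) gives $r_1>x$ with $n(r_1)=n(x)$; and then the value $\min_{[x,r_1]}n$ is attained at an interior point $<$ both endpoints' values, in particular $\le c$ after comparing with the minimum over $[x,z]$ — here one must be slightly careful to choose $z<r_1$, which is fine because we are free to take $y$ (hence implicitly the scale) large, and if necessary re-run the argument with $x$ replaced by $z$ to get $r_2>z$ with $n(r_2)=n(z)<c+\e$ and an interior dip strictly below. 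Cleaning up these inequalities so that the single number $y$ works \emph{uniformly} for all $t\in[x,z]$ is the one genuinely fiddly point; everything else is a direct citation of Proposition~\ref{prop-intro: choking} and Theorem~\ref{thm-intro: nonneg curv}.
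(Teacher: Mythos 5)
Your plan has a fatal gap at its core: the ``dip'' you need cannot exist. Under the hypotheses $G_n\ge 0$ and $n(\infty)=\infty$, the derivative $n'$ is non-increasing and must stay positive (if $n'$ ever reached $0$ it would stay $\le 0$ and $n$ would be bounded), so $n$ is \emph{strictly increasing} on $[0,\infty)$. Hence $\min_{[z,y]}n=n(z)>n(x)=\min_{[x,z]}n$ for every $y$, and your Step~3 is unachievable; since $m=n$ on $[0,y]$, any dip of $m$ below the level $n(t)$ could only occur beyond $y$, which is exactly the region over which the theorem gives you no control. The error that leads you there is treating Proposition~\ref{prop-intro: choking} as an equivalence. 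It is only a necessary condition for $q\in\Cr_m$; its converse --- your claim that for every $t>R_n$ there is $r>t$ with $n(r)\le n(t)$, or that a cut point of $\g_q$ arises ``precisely because $m$ returns to the level $m(t)$'' --- is false. A two-sheeted hyperboloid with $n'(\infty)<\tfrac12$ has finite $R_n$ while $n$ is strictly increasing: the tangent geodesic $\g_q$ at $r_q>R_n$ fails to be a ray because its turn angle $\int_{r_q}^\infty F_{n(r_q)}$ exceeds $\pi$ (Lemmas~\ref{lem: ray iff at most pi} and \ref{lem: crit point and ray tangent to parallel}), not because $n$ comes back down.

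The correct mechanism, which is what the paper uses via Theorem~\ref{thm: neck}, is a quantitative lower bound on the \emph{partial} turn angle accumulated on $[r_q,y]$, which depends only on $m\vert_{[0,y]}=n\vert_{[0,y]}$. With $b:=\max_{[x,y]}m'<\tfrac12$ and $c:=m(r_q)$, the substitution $dm=m'\,dr$ gives
\[
\int_{r_q}^{y}\frac{c\,dr}{m\sqrt{m^2-c^2}}\;\ge\;\frac{1}{b}\,\arccos\!\Big(\frac{c}{m(y)}\Big),
\]
and since $n(\infty)=\infty$ one can choose $y$ so large that $m(r_q)<\cos(\pi b)\,m(y)$ for all $r_q\in[x,z]$, forcing $T_{\g_q}>\pi$ and hence $q\notin\Cr_m$ for any von Mangoldt extension $m$ of $n\vert_{[0,y]}$. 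If you want to salvage your approach, you must replace the choking criterion by this turn-angle estimate; no argument based solely on the monotonicity profile of $m$ can work here.
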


In general, if $M_m$, $M_n$ are von Mangoldt planes with
$n=m$ on $[0, y]$, then the sets $\Cr_m$, $\Cr_n$ could be quite
different. For instance, if $M_n$ is a paraboloid, then $\Cr_n=\{o\}$, 
but by Example~\ref{ex: m' positive but non-connected}
for any $y>0$ there is a von Mangoldt $M_m$ with some negative curvature
such that $m=n$ on $[0,y]$, and by Theorem~\ref{thm-intro: neg curv}
the set $M_m\,\mbox{--}\, \Cr_m$ is bounded 
and $\Cr_m$ contains the ball of poles of positive radius.

Basic properties of von Mangoldt planes are described in 
Appendix~\ref{sec: vm planes}, in particular,
in order to construct a von Mangoldt plane with prescribed $G_m$
it suffices to check that $0$ is the only zero of
the solution of the Jacobi initial value problem (\ref{form: ode})
with $K=G_m$, where $G_m$ is smooth on $[0,\infty)$.
Prescribing values of $m^\prime$ is harder.
It is straightforward to see that if $M_m$ is a von Mangoldt plane
such that $m^\prime$ is constant near infinity, then 
$G_m\ge 0$ everywhere and $m^\prime(\infty)\in [0,1]$. 
We do not know whether there is
a von Mangoldt plane with $m^\prime=0$ near infinity,
but all the other values in $(0,1]$ can be prescribed:

\begin{thm}
\label{thm-intro: slope realized}
For every $s\in (0,1]$ there is $\r>0$ and a von Mangoldt plane $M_m$
such that $m^\prime=s$ on $[\r,\infty)$.
\end{thm}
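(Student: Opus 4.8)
The plan is to build $m$ by prescribing its curvature $G_m=-m''/m$ as a smooth, non-increasing, everywhere-nonnegative function that is identically zero on $[\r,\infty)$, and then to arrange the initial data at $r=0$ so that the resulting solution of the Jacobi equation $m''+G_m\,m=0$ has $m(0)=0$, $m'(0)=1$, no further zero, and terminal slope exactly $s$. Once $G_m\equiv 0$ on $[\r,\infty)$ the function $m$ is affine there, $m(r)=m(\r)+m'(\r)(r-\r)$, so prescribing $m'=s$ on $[\r,\infty)$ is equivalent to prescribing $m'(\r)=s$; and since $G_m\ge 0$ is non-increasing the plane is automatically von Mangoldt by the criterion recalled in Appendix~\ref{sec: vm planes}. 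The only real content is therefore: given $s\in(0,1]$, produce a smooth non-increasing $K\ge 0$ supported in $[0,\r]$ such that the solution of $m''+Km=0$, $m(0)=0$, $m'(0)=1$ stays positive on $(0,\infty)$ and has $m'(\r)=s$.

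First I would handle the linear/flat end of the range: if $s=1$ take $K\equiv 0$, giving the standard plane $m(r)=r$. For $s\in(0,1)$ I would use a one-parameter family of ``bump'' curvatures. Fix once and for all a smooth non-increasing function $\phi\co[0,\infty)\to[0,1]$ with $\phi\equiv 1$ on $[0,1]$ and $\phi\equiv 0$ on $[2,\infty)$, and for a parameter $a>0$ set $K_a(r):=a\,\phi(r)$; this is smooth, non-increasing, nonnegative, and supported in $[0,2]$, so $\r=2$ works uniformly. Let $m_a$ be the solution of $m_a''+K_a m_a=0$, $m_a(0)=0$, $m_a'(0)=1$. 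Since the curvature is supported in $[0,2]$, $m_a$ is affine on $[2,\infty)$ with slope $f(a):=m_a'(2)$, provided $m_a$ has no zero in $(0,2]$ — which holds as long as $m_a>0$ throughout $[0,2]$, and then positivity on all of $(0,\infty)$ is automatic since an affine function that is positive with nonnegative slope at $r=2$ stays positive (note $f(a)=m_a'(2)\ge 0$ whenever $m_a\ge0$ on $[0,2]$, because $m_a'$ is non-increasing only where $K_a m_a\ge0$). So the task reduces to showing $f$ takes every value in $(0,1)$.

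The key step is analyzing $f(a)=m_a'(2)$ as $a$ ranges over $(0,\infty)$. By standard smooth dependence of ODE solutions on parameters, $a\mapsto m_a$ is continuous (indeed smooth) in $C^1[0,2]$ on the open set of $a$ for which $m_a$ has no zero in $(0,2]$, hence $f$ is continuous there. As $a\to 0^+$, $m_a\to r$ uniformly in $C^1$, so $f(a)\to 1$. For the other end, comparison with the constant-curvature model shows that on $[0,1]$, where $K_a\equiv a$, we have $m_a(r)=\tfrac{1}{\sqrt a}\sin(\sqrt a\, r)$ exactly; this first vanishes at $r=\pi/\sqrt a$, which is $\le 2$ once $a\ge (\pi/2)^2$, and becomes negative just after. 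Thus there is a threshold $a_\ast\le(\pi/2)^2$ below which $m_a$ stays positive on $[0,2]$; as $a\uparrow a_\ast$ one checks (again by continuity of the flow, now including the value $m_a(2)$) that $m_a(2)\to 0$ while $m_a$ remains nonnegative, forcing $m_a'(2)=f(a)\to 0$. [More carefully: pick the largest interval $(0,a_\ast)$ of parameters with $m_a>0$ on $(0,2]$; at $a=a_\ast$ continuity gives $m_{a_\ast}\ge0$ on $[0,2]$ with $m_{a_\ast}(2)=0$, whence $m_{a_\ast}'(2)\le0$; but $m_a'$ is non-increasing wherever $m_a\ge0$, and $m_{a_\ast}(1)>0$, $m_{a_\ast}'(1)=\cos(\sqrt{a_\ast})$... — in fact it is cleaner to note that the solution on $[1,2]$ with $K=0$ is affine, so $m_{a_\ast}(2)=0$ combined with $m_{a_\ast}\ge0$ forces $m_{a_\ast}$ to vanish identically past the last zero, impossible unless the zero is at $2$, giving $f(a)\to 0^+$ as $a\uparrow a_\ast$.] Either way, $f$ is continuous on $(0,a_\ast)$ with limits $1$ and $0$ at the two ends, so by the intermediate value theorem $f$ attains every value in $(0,1)$, completing the proof.

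The main obstacle is the endpoint analysis $f(a)\to 0$ as $a$ approaches the critical value $a_\ast$: one must argue that the solution does not lose positivity on the interior of $[0,2]$ before its slope has been driven all the way down to $0$ at the right endpoint. The device above — that $K_a$ is supported in $[0,2]$ so $m_a$ is affine on $[1,2]$ (where $\phi\equiv0$), hence cannot dip down and come back up — makes this clean: the first (and only possible) zero of $m_a$ in $[0,2]$, if it occurs, must be the point $r=\pi/\sqrt a$ coming from the $\sin$ on $[0,1]$ when $a\ge(\pi/2)^2$, or else from the affine piece on $[1,2]$, and in the borderline case that zero sits exactly at $r=2$ with $m_a'(2)=0^-\!$, which one then perturbs. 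A secondary technical point is verifying $m_a$ remains positive on all of $(2,\infty)$: this is immediate once $m_a(2)>0$ and $m_a'(2)=f(a)\ge 0$, and $f(a)\ge 0$ holds for every $a\in(0,a_\ast)$ because $m_a'$ is non-increasing exactly on the set where $K_a m_a\ge 0$, which on $[0,2]$ is everywhere that $m_a\ge 0$, so $m_a'(2)\ge m_a'(r)$ fails the wrong way — more simply, if $f(a)<0$ then $m_a$ would be strictly decreasing affine on $[2,\infty)$ and eventually negative, contradicting that this $a$ lies in our positivity range, so $f(a)\ge0$ throughout. Everything else is routine ODE continuity.
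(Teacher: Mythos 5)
Your overall strategy -- prescribe a one-parameter family of non-increasing bump curvatures $K_a=a\phi$ supported in $[0,2]$, observe that $m_a$ is affine past $r=2$, and run the intermediate value theorem on the terminal slope $f(a)=m_a'(2)$ -- is viable and genuinely different from the paper's. (The paper instead uses $K_u(r)=\frac{1}{4(r+1)^2}-u$, the explicit borderline solution $m_0=\ln(r+1)\sqrt{r+1}$, and Sturm comparison to keep every member of the family positive, so it never has to confront a loss of positivity; it then truncates the negative part of $K_u$.) But your endpoint analysis at the critical parameter $a_*$ contains concrete errors. First, $\phi\equiv 0$ only on $[2,\infty)$ in your own construction, so $m_a$ is \emph{not} affine on $[1,2]$; the ``cleaner'' argument that the borderline zero must sit at $r=2$ cannot invoke affineness there. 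Second, the claims that $f(a)\ge 0$ throughout the positivity range and that $f(a)\to 0^+$ as $a\uparrow a_*$ are both false. At $a=a_*$ one has $m_{a_*}\ge 0$ on $[0,2]$ with a zero $r_0\in(0,2]$; an interior zero would force $m_{a_*}'(r_0)=0$ and hence $m_{a_*}\equiv 0$ by uniqueness for the linear ODE, so indeed $r_0=2$ -- but then $m_{a_*}'(2)\ne 0$ for the same uniqueness reason, whence $m_{a_*}'(2)<0$ and $f(a)\to m_{a_*}'(2)<0$. In particular $f$ is strictly negative for $a$ near $a_*$ (for such $a$ the plane $g_{m_a}$ does not even exist, since the affine tail eventually becomes negative), which contradicts your parenthetical ``$f(a)=m_a'(2)\ge 0$ whenever $m_a\ge 0$ on $[0,2]$'': concavity gives no lower bound on the derivative at the right endpoint. (Also, the bound $a_*\le(\pi/2)^2$ is unjustified, since the formula $m_a=\frac{1}{\sqrt a}\sin(\sqrt a\,r)$ holds only on $[0,1]$; you only get $a_*\le\pi^2$ this way, which is all you need.)

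Fortunately the errors point in a harmless direction and the argument is repairable with the pieces you already have: $f$ is continuous on $(0,a_*)$, $f\to 1$ as $a\to 0^+$, and $f(a)<0$ for $a$ close to $a_*$, so by the intermediate value theorem every $s\in(0,1)$ equals $f(a)$ for some $a\in(0,a_*)$; for that $a$, positivity of $m_a$ on $(0,2]$ together with $f(a)=s>0$ makes the affine tail positive, so $m_a>0$ on all of $(0,\infty)$, $g_{m_a}$ is a von Mangoldt plane (as $K_a$ is non-increasing), and $m_a'=s$ on $[2,\infty)$. With the endpoint analysis corrected along these lines your proof is complete and arguably more elementary than the paper's, which buys its positivity control from the explicit logarithmic solution and Sturm comparison at the cost of a more delicate two-parameter smoothing step.
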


Thus each cone in $\mathbb R^3$ can be smoothed
to a von Mangoldt plane, but we do not know how to construct
a (smooth) capped cylinder that is von Mangoldt.

{\bf Structure of the paper.} 
We collect notations and conventions in Section~\ref{sec: notations}.
Properties of von Mangoldt planes are reviewed in
Appendix~\ref{sec: vm planes}, while 
Appendix~\ref{sec: calc lem} contains a calculus lemma
relevant to continuity and smoothness of the turn angle. 
Section~\ref{sec: turn angle} contains various results
on rays in von Mangoldt planes, including the proof of 
Theorem~\ref{thm-intro: away} and
Proposition~\ref{prop-intro: choking}. 
Planes of nonnegative curvature are discussed in 
Section~\ref{sec: nneg}, where
Theorems~\ref{thm-intro: nonneg curv} and \ref{thm-intro: crit is soul}
are proved. A proof of
Theorem~\ref{thm-intro: slope realized} 
is in Section~\ref{sec: smoothed cones}, and the other results
stated in the introduction are proved in Section~\ref{sec: other proofs}.

{\bf Acknowledgments.} 
Belegradek is grateful to NSF for support (DMS-0804038).
This paper will be a part of Choi's Ph.D. thesis at Emory University.

\section{Notations and conventions.}
\label{sec: notations}

All geodesics are parametrized by arclength. 
Minimizing geodesics are called {\it segments}.
Let $\d_r$, $\d_\th$ denote the vector fields dual
to $dr$, $d\th$ on $\mathbb R^2$.
Given $q\neq o$, denote its polar coordinates by $\th_q$, $r_q$.
Let $\g_q$, $\mu_q$, $\tau_q$ denote the geodesics defined
on $[0,\infty)$ that start at $q$
in the direction of $\d_\th$, $\d_r$, $-\d_r$, respectively. 
We refer to $\tau_{q}\vert_{(r_q,\infty)}$ as the 
{\it meridian opposite $q$}; note that $\tau_{q}(r_q)=o$.
Also set $\k_{\g(s)}:=\angle(\dot\g(s),\d_r)$.

We write $\dot{r}$, $\dot\th$, $\dot\g$, $\dot\k$ for the derivatives of
$r_{\g(s)}$, $\th_{\g(s)}$, $\g(s)$, $\k_{\g(s)}$ by $s$, and 
write $m^\prime$ for $\frac{dm}{dr}$; similar notations are used for higher derivatives.

Let $\hat\k(r_q)$ denote the maximum of the angles
formed by $\mu_q$ and rays emanating from $q\neq o$;
let $\xi_q$ denote the ray with $\xi_q(0)=q$ 
for which the maximum is attained, i.e. 
such that $\k_{\xi_q(0)}=\hat\k(r_{q})$.

A geodesic $\g$ in $M_m\,\mbox{--}\, \{o\}$ is called
{\it counterclockwise\,} if $\dot\th>0$ and
{\it clockwise\,} if $\dot\th<0$. 
A geodesic in $M_m$ is clockwise, counterclockwise,
or can be extended to a geodesic through $o$. 
If $\g$ is clockwise, then it can be mapped to a 
counterclockwise geodesic by an isometric involution of $M_m$.

{\bf Convention:} {\it unless stated otherwise, 
any geodesic in $M_m$ that we consider is either 
tangent to a meridian or counterclockwise}.

Due to this convention the Clairaut constant and the turn angle
defined below are nonnegative, which will simplify notations.

\section{Turn angle and rays in $M_m$}
\label{sec: turn angle}

This section collects what we know about rays in $M_m$
with emphasis on the cases when $G_m\ge 0$ or $G_m^\prime\le 0$.
Let $\g$ be a geodesic in $M_m$ that does not pass through $o$, so that 
$\g$ is a solution of the geodesic equations
\begin{equation}\label{form: geod eq}
{\ddot r}=m\, m^\prime\, {\dot\th}^2
\qquad\qquad
{\dot\th}\, m^2=c
\end{equation}
where $c$ is called {\it Clairaut's constant of $\g$}. 
The equation $\dot{\th}\, m^2=c$ is the so called
{\it Clairaut's relation\,}, which since $\g$ is assumed counterclockwise, 
can be written as $c=m(r_{\g(s)})\sin\k_{\g(s)}$.
Thus $0\le c\le m(r_\g(s))$ where $c= m(r_\g(s))$ only
at points where $\g$ is tangent to a parallel, and
$c=0$ when $\g$ is tangent to a meridian.

A geodesic is called {\it escaping} if its image is unbounded, e.g.
any ray is escaping.

\begin{fact}
\label{fact: on escaping geodesics}
\rm\hspace{-10pt}
\begin{enumerate}
\item
A parallel through $q$ is a geodesic in $M_m$ if and only if 
$m^\prime(r_q)=0$~\cite[Lemma 7.1.4]{SST}.
\item
A geodesic $\g$ in $M_m$ is tangent to a parallel at $\g(s_0)$
if and only if ${\dot r}_{\g(s_0)}=0$. 
\item
\label{fact: escaping geodesics are tangent to parallels only once}
If $\g$ is a geodesic in $M_m$ and
${\dot r}_{\g(s)}$ vanishes more than once,
then $\g$ is invariant under a rotation of $M_m$ about 
$o$~\cite[Lemma 7.1.6]{SST} and hence not escaping.
\end{enumerate}
\end{fact}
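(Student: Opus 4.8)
The plan is to dispatch (1) and (2) by a direct computation with the geodesic equations~(\ref{form: geod eq}) and the orthogonal polar splitting $T_p\mathbb R^2=\langle\d_r\rangle\oplus\langle\d_\th\rangle$, and to obtain (3) from a first-order reduction of the radial motion together with the reflection symmetry of $M_m$ across meridians; since (1) and (3) are exactly~\cite[Lemma 7.1.4]{SST} and~\cite[Lemma 7.1.6]{SST}, the point of the argument is mainly to record why they hold.

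For (1): the parallel through $q$, parametrized proportionally to arclength, has $r\equiv r_q$, hence $\dot r\equiv 0$ and $\ddot r\equiv 0$ while $\dot\th$ is a nonzero constant; substituting into the first equation of~(\ref{form: geod eq}) gives $0=m(r_q)\,m^\prime(r_q)\,\dot\th^2$, which, as $m(r_q)>0$ and $\dot\th\neq 0$, is equivalent to $m^\prime(r_q)=0$; conversely, if $m^\prime(r_q)=0$ then $s\mapsto\bigl(r_q,\,\th_q+s/m(r_q)\bigr)$ solves both equations of~(\ref{form: geod eq}) with $c=m(r_q)$ and so is a geodesic. For (2): writing $\dot\g=\dot r\,\d_r+\dot\th\,\d_\th$, an orthogonal decomposition, the parallel through $\g(s_0)$ has velocity a nonzero multiple of $\d_\th$ there, so $\g$ is tangent to that parallel at $\g(s_0)$ precisely when the $\d_r$-component of $\dot\g(s_0)$ vanishes, i.e.\ $\dot r_{\g(s_0)}=0$; note that when $\dot r_{\g(s_0)}=0$, unit speed forces $\dot\th(s_0)\neq 0$, so $\dot\g(s_0)$ is indeed a nonzero multiple of $\d_\th$.

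For (3): first, a geodesic with Clairaut constant $c=0$ is a meridian, along which $\dot r\equiv\pm1$, so the hypothesis that $\dot r$ vanishes forces $c>0$ and, in particular, that $\g$ avoids $o$ and satisfies~(\ref{form: geod eq}). Combining unit speed $\dot r^2+m^2\dot\th^2=1$ with Clairaut's relation $m^2\dot\th=c$ then gives
\[
\dot r^2=1-\frac{c^2}{m(r_{\g(s)})^2}=:\phi\bigl(r_{\g(s)}\bigr),
\]
whose right-hand side depends on $s$ only through $r_{\g(s)}$. Along $\g$ one has $\phi\ge 0$, with $\phi(r)=0$ exactly when $m(r)=c$; moreover $\phi^\prime(r)=2c^2 m^\prime(r)/m(r)^3$, so a zero $\bar r$ of $\phi$ is degenerate precisely when $m^\prime(\bar r)=0$. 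Hence at a parameter where $\dot r$ vanishes, with radial coordinate $\bar r$, either $m^\prime(\bar r)=0$---and then (1) together with uniqueness of geodesics forces $\g$ to be the parallel through that point, which is rotationally invariant and bounded---or $\bar r$ is a strict local extremum of $r_{\g(\cdot)}$ and, by uniqueness of the geodesic tangent to a parallel at a given point, $\g$ is symmetric under reflection of $M_m$ in the meridian through that point. If $\dot r_{\g(s)}$ vanishes at two distinct parameters, this second alternative at each turning point makes $\g$ invariant under the composition of the two meridian reflections, hence under a rotation about $o$; equivalently, $r_{\g(\cdot)}$ stays trapped between two consecutive zeros of $\phi$, so the image of $\g$ lies in a bounded annulus and $\g$ is not escaping.

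The verifications in (1) and (2) present no obstacle. The main point to get right in (3) is the case analysis at a turning point---distinguishing the degenerate case $m^\prime(\bar r)=0$, where $\g$ collapses onto a parallel, from the generic reflecting case---and checking that a product of reflections in two distinct meridians is a nontrivial rotation; all of this is the content of~\cite[Lemma 7.1.6]{SST}, which I would cite for the details rather than reprove.
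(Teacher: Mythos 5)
Your proposal is correct, and it is worth noting that the paper itself offers no proof of this statement: it is recorded as a Fact, with items (1) and (3) cited to \cite[Lemmas 7.1.4 and 7.1.6]{SST} and item (2) treated as immediate from the polar form of the metric. Your verification of (1) and (2) from the geodesic system (\ref{form: geod eq}) together with the orthogonal splitting of $\dot\g$ is exactly the standard computation, and your treatment of (3) --- forcing $c>0$, reducing to $\dot r^2=1-c^2/m^2$, separating the degenerate turning point $m^\prime(\bar r)=0$ (where uniqueness of geodesics collapses $\g$ onto a geodesic parallel) from the non-degenerate one (where $\g$ is symmetric under reflection in the meridian through the turning point), and concluding boundedness --- is precisely the argument behind \cite[Lemma 7.1.6]{SST}, so your deferral to that reference for the remaining details is appropriate. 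One small point worth tightening: the composition of the two meridian reflections could in principle be a trivial rotation (if the turn angle between the turning points is a multiple of $\pi$), so the clean route to ``not escaping'' is the one you also indicate, namely that non-degenerate turning points are isolated zeros of $\dot r$ (since $\ddot r=c^2m^\prime/m^3\neq 0$ there), so two of them yield consecutive ones, and the reflection symmetry then makes $r_{\g(s)}$ periodic; hence $\g$ stays in a compact annulus, which is what the conclusion actually uses.
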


\begin{lem} 
\label{lem: basic choke}
If $\g_q$ is escaping, 
then $m(r)>m(r_q)$ for $r>r_q$, and 
$m^\prime(r_q)> 0$. 
\end{lem}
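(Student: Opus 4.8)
The plan is to use that $\g_q$ starts tangent to the parallel through $q$, so its Clairaut constant is $c=m(r_q)$, and then to read off the behaviour of $r_{\g_q(s)}$ from the geodesic equations~(\ref{form: geod eq}).

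First I would record that $\dot r_{\g_q(0)}=0$ and, by Clairaut's relation, $c=m(r_q)>0$, so $\dot\th=c/m^2>0$ along $\g_q$. Since $\g_q$ is escaping, Fact~\ref{fact: on escaping geodesics}(\ref{fact: escaping geodesics are tangent to parallels only once}) prevents $\dot r_{\g_q(s)}$ from vanishing more than once, hence $\dot r_{\g_q(s)}\neq 0$ for all $s>0$; therefore $r_{\g_q}$ is strictly monotone on $[0,\infty)$, and the direction of monotonicity agrees with the sign of $\ddot r_{\g_q(0)}=m(r_q)\,m^\prime(r_q)\,\dot\th_{\g_q(0)}^2$, i.e. with the sign of $m^\prime(r_q)$.

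Next I would exclude $m^\prime(r_q)\le 0$. If $m^\prime(r_q)=0$, then by Fact~\ref{fact: on escaping geodesics}(1) the parallel through $q$ is a geodesic, and since $\g_q$ is the geodesic through $q$ tangent to it, uniqueness of geodesics identifies $\g_q$ with that (bounded) parallel, contradicting that $\g_q$ is escaping. If $m^\prime(r_q)<0$, then $\ddot r_{\g_q(0)}<0$, so $r_{\g_q}$ decreases on $[0,\infty)$; but Clairaut's relation gives $m(r_{\g_q(s)})\ge c=m(r_q)>0$ for all $s$, and since $m(0)=0$ the set $\{\,r:\,m(r)\ge m(r_q)\,\}$ has positive infimum, so $r_{\g_q(s)}$ stays in a compact annulus about $o$ and the image of $\g_q$ is bounded --- again contradicting that $\g_q$ is escaping. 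This leaves $m^\prime(r_q)>0$, which is the second assertion.

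Finally, knowing $m^\prime(r_q)>0$, I would conclude $r_{\g_q}$ is strictly increasing on $[0,\infty)$ and unbounded (otherwise the image of $\g_q$ lies in a compact annulus), so it maps $[0,\infty)$ onto $[r_q,\infty)$. For a given $r>r_q$, the unique $s>0$ with $r_{\g_q(s)}=r$ satisfies $\dot r_{\g_q(s)}\neq 0$, so $\g_q$ is not tangent to a parallel at $\g_q(s)$ by Fact~\ref{fact: on escaping geodesics}(2), whence $\sin\k_{\g_q(s)}<1$ and Clairaut's relation gives $m(r)=c/\sin\k_{\g_q(s)}>c=m(r_q)$. The main obstacle is not a computation but making the trichotomy on the sign of $m^\prime(r_q)$ airtight --- in particular treating $m^\prime(r_q)=0$ via uniqueness of geodesics rather than a second-derivative test, and observing that $c=m(r_q)>0$ confines a radially bounded geodesic to a compact annulus, which is exactly what turns ``escaping'' into the needed contradictions.
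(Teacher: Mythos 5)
Your proof is correct and uses the same ingredients as the paper's: Clairaut's relation with $c=m(r_q)$, the fact that an escaping geodesic is tangent to a parallel only once, and the characterization of geodesic parallels via $m^\prime=0$. You merely arrange them in a different order (settling the sign of $m^\prime(r_q)$ first, then deducing surjectivity of $r_{\g_q}$ onto $[r_q,\infty)$ and the strict inequality), and you fill in details the paper leaves implicit, such as why the image of $r_{\g_q}$ contains $[r_q,\infty)$.
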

\begin{proof}
Since $\g_q$ is escaping, the image of $s\to r_{\g_q}(s)$ 
contains $[r_q,\infty)$, and $q$ is the only point where
$\g_q$ is tangent to a parallel.
The Clairaut constant of $\g_q$ is $c=m(r_q)$, hence $m(r)>m(r_q)$
for all $r>r_q$. It follows that $m^\prime(r_q)\ge 0$. Finally,
$m^\prime(r_q)\neq 0$ else $\g_q$ would equal the parallel through 
$q$.
\end{proof}

\begin{lem} 
\label{lem: no crossing}
If $\g$ is escaping geodesic that is tangent to the parallel $P_q$
through $q$, then $\g\setminus\{q\}$ lies
in the unbounded component of $M_m\setminus P_q$.
\end{lem}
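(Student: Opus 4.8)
The plan is to control the radial coordinate $r_{\g(s)}$ along $\g$ and show that it attains a strict global minimum exactly at the tangency point. First I would extend $\g$ to a complete geodesic $\g\co\mathbb R\to M_m$ and reparametrize so that $\g$ is tangent to $P_q$ at $s=0$; then $r_{\g(0)}=r_q$ and, by Fact~\ref{fact: on escaping geodesics}(2), $\dot r_{\g(0)}=0$. Since $q\neq o$ we have $m(r_q)>0$, and also $m^\prime(r_q)\neq 0$: otherwise the parallel through $q$ is a geodesic by Fact~\ref{fact: on escaping geodesics}(1), and tangency at $q$ together with uniqueness of geodesics would force $\g$ to coincide with that parallel, contradicting that $\g$ is escaping. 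Moreover $\g$ is not a meridian, since meridians meet every parallel orthogonally and hence are never tangent to $P_q$; so by the standing convention $\g$ is counterclockwise, avoids $o$, and satisfies the geodesic equations (\ref{form: geod eq}) with Clairaut constant $c=m(r_q)>0$, so $\dot\th_{\g(0)}\neq 0$.

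Next I would evaluate $\ddot r=m\,m^\prime\,\dot\th^2$ at $s=0$: since $m(r_q)>0$ and $\dot\th_{\g(0)}\neq 0$, the sign of $\ddot r_{\g(0)}$ equals that of $m^\prime(r_q)$. Because $\g$ is escaping, Fact~\ref{fact: on escaping geodesics}(\ref{fact: escaping geodesics are tangent to parallels only once}) guarantees $\dot r_{\g(s)}$ vanishes only at $s=0$. If $m^\prime(r_q)<0$, then $s=0$ is a strict local maximum of $r_{\g(s)}$, and since $\dot r_{\g(s)}$ never vanishes again this gives $r_{\g(s)}\le r_q$ for all $s$; then the image of $\g$ lies in the compact metric ball $\bar B(o,r_q)$, contradicting that $\g$ is escaping. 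Hence $m^\prime(r_q)>0$.

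Finally, with $m^\prime(r_q)>0$ we get $\ddot r_{\g(0)}>0$, so $\dot r_{\g(s)}<0$ for $s$ slightly below $0$ and $\dot r_{\g(s)}>0$ for $s$ slightly above $0$; as $\dot r_{\g(s)}$ has no other zero, $r_{\g(s)}$ is strictly decreasing on $(-\infty,0]$ and strictly increasing on $[0,\infty)$, whence $r_{\g(s)}>r_q$ for every $s\neq 0$. Since $M_m\setminus P_q$ is the disjoint union of the bounded component $\{r<r_q\}$ and the unbounded component $\{r>r_q\}$, this says exactly that $\g\setminus\{q\}$ lies in the unbounded component. I do not expect a serious obstacle; the only step requiring care is the sign analysis of $m^\prime(r_q)$, and both degenerate possibilities $m^\prime(r_q)\le 0$ are ruled out by the escaping hypothesis as above.
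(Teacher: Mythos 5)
Your proof is correct. It reaches the conclusion by a different route than the paper: you run a sign analysis of the geodesic ODE $\ddot r=m\,m^\prime\,\dot\th^2$ at the tangency point, first excluding $m^\prime(r_q)=0$ (uniqueness against the geodesic parallel) and $m^\prime(r_q)<0$ (which would make $r_q$ a strict local max of $r$ and hence, via the single-tangency Fact~\ref{fact: on escaping geodesics}(\ref{fact: escaping geodesics are tangent to parallels only once}), trap $\g$ in $\bar B(o,r_q)$), and then concluding that $r_{\g(s)}$ has a strict global minimum at the tangency and is strictly monotone on either side. The paper instead argues synthetically: by reflectional symmetry $\g$ locally stays on one side of $P_q$, and a crossing would force $|r_{\g(s)}-r_q|$ to attain an interior maximum, producing a forbidden second tangency. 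Both arguments hinge on the same key input (an escaping geodesic is tangent to a parallel at most once), but yours is more quantitative and yields slightly more as a by-product — the strict monotonicity of $r$ away from $q$ and an independent derivation of $m^\prime(r_q)>0$, which the paper obtains separately in Lemma~\ref{lem: basic choke} via the Clairaut constant. The extension of $\g$ to a complete geodesic is legitimate since $M_m$ is complete and the extension is still escaping, so Fact~\ref{fact: on escaping geodesics}(\ref{fact: escaping geodesics are tangent to parallels only once}) applies to it.
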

\begin{proof} 
By reflectional symmetry and uniqueness of geodesics,
$\g$ locally stays on the same side of the parallel 
$P_q$ through $q$, i.e. $\g$ is the union of $\g_q$ and
its image under the reflecting fixing $\mu_q\cup\tau_q$.
If $\g $ could 
cross to the other side of $P_q$ at some
point $\g(s)$, then 
$|r_{\g (s)}-r_q|$ would attain a maximum between 
$\g(s)$ and $q$, and at the maximum point
$\g $ would be tangent to a parallel. Since $\g$ is escaping, 
it cannot be tangent to parallels more than once, hence 
$\g$ stays on the same side of $P_q$ at all times, and 
since $\g$ is escaping, it stays in the unbounded component of
$M_m\setminus P_q$.
\end{proof}

For a geodesic $\g\co (s_1, s_2)\to M_m$ that does 
not pass through $o$, 
we define the {\it turn angle $T_\g$ of $\g$} as 
\[T_\g:=\int_\g d\th=
\int_{s_1}^{s_2}{\dot\th}_{\g(s)} ds=\th_{\g(s_2)}-\th_{\g(s_1)}.\] 
The Clairaut's relation reads ${\dot\th}=c/m^2\ge 0$
so the above integral $T_\g$ converges to a number in $[0,\infty]$.
Since $\g$ is unit speed, we have $({\dot r})^2+m^2{\dot\th}^2=1$. 
Combining this with ${\dot\th}=c/m^2$ gives
${\dot r}=\mathrm{sign}({\dot r})\sqrt{1-\frac{c^2}{m^2}}$, which
yields a useful formula for the turn angle:
if $\g$ is not tangent to a meridian or a parallel on $(s_1, s_2)$,
so that $\mathrm{sign}({\dot r}_{\g(s)})$ is a nonzero constant, then
\begin{equation}
\label{form: defn F_c}
\frac{d\th}{dr}=
\frac{{\dot\th}}{{\dot r}}=
\mathrm{sign}({\dot r}_{\g(s)})\ \! F_c(r)\ \ \ \text{where}\ 
\ \ F_c:=\frac{c}
{m\sqrt{m^2-c^2}},
\end{equation}
and thus if $r_i:=r_{\g(s_i)}$, then 
\begin{equation}
\label{form: general turn angle}
T_\g=
\mathrm{sign}({\dot r})\int_{r_1}^{r_2} F_c(r) dr.
\end{equation}
Since $c^2\le m^2$, this integral is finite except possibly when
some $r_i$ is in the set $\{m^{-1}(c),\, \infty\}$.
The integral (\ref{form: general turn angle}) 
converges at $r_i=m^{-1}(c)$ if and only if
$m^\prime(r_i)\neq 0$. 
Convergence of (\ref{form: general turn angle}) 
at $r_i=\infty$ implies convergence of $\int_1^\infty m^{-2} dr$, 
and the converse holds under the assumption
$\underset{r\to\infty}{\liminf}\, m(r)>c$;
this assumption is true when $G_m\ge 0$ or $G_m^\prime\le 0$,
as follows from Lemma~\ref{lem: m growth} below.

\begin{ex}
\label{ex: ray turn angle} 
If $\g$ is a ray in $M_m$ that does not pass through $o$,
then $T_\g\le \pi$ else there is $s$ with 
$|\th_{\g(s)}-\th_{\g(0)}|=\pi$, and by symmetry 
the points $\g(s)$, $\g(0)$ are joined by two segments, so $\g$
would not be a ray. 
\end{ex}

\begin{ex}
\label{ex: ray exists implies properties of m}
If $T_{\g_q}$ is finite, then $m^\prime(r_q)\neq 0$ and
$m^{-2}$ is integrable on $[1,\infty)$, as follows immediately from
the discussion preceding Example~\ref{ex: ray turn angle}.
\end{ex}

\begin{lem} 
\label{lem: turn angle escaping}
If $\g\co [0,\infty)\to M_m$ is a geodesic
with finite turn angle, then $\g$ is escaping.
\end{lem}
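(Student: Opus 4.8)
The plan is to argue by contrapositive: if $\g$ is not escaping, then its image is bounded, and I want to show the turn angle $T_\g$ is infinite. Since $\g$ is defined on all of $[0,\infty)$ and, by our convention, is either tangent to a meridian or counterclockwise, the boundedness of the image means that $r_{\g(s)}$ stays in some compact interval $[a,b]\subset (0,\infty)$ for large $s$ (it cannot return to $o$ infinitely often without $\g$ passing through $o$, which either makes $\g$ a meridian line — unbounded, hence escaping — or is excluded). The key structural input is Fact~\ref{fact: on escaping geodesics}(\ref{fact: escaping geodesics are tangent to parallels only once}): a geodesic whose radial derivative $\dot r$ vanishes more than once is invariant under a rotation about $o$, hence a parallel-like curve that is bounded and has infinite turn angle. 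So the interesting case is when $\dot r$ vanishes at most once; after that one (possible) tangency, $\mathrm{sign}(\dot r)$ is a nonzero constant and $r_{\g(s)}$ is monotone in $s$.

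First I would dispose of the genuinely bounded monotone case. If $\dot r$ is eventually of one sign and $\g$ stays bounded, then $r_{\g(s)}$ converges monotonically to some finite limit $r_\infty>0$ as $s\to\infty$. From the unit-speed relation $\dot r^2 + m^2\dot\th^2 = 1$ together with Clairaut $\dot\th\, m^2 = c$, we get $\dot r = \mathrm{sign}(\dot r)\sqrt{1 - c^2/m^2}$. If $\dot r\to 0$ then $m(r_{\g(s)})\to c$, i.e. $m(r_\infty)=c$; but $r_{\g(s)}$ approaches $r_\infty$ without ever reaching it (no second tangency), so $r_\infty$ would be a limiting tangency value. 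Near such a point the integrand $F_c(r)=c/(m\sqrt{m^2-c^2})$ of~(\ref{form: general turn angle}) blows up, and one checks the integral $\int^{r_\infty} F_c\, dr$ diverges precisely because $m^2(r)-c^2\to 0$: either $m^\prime(r_\infty)=0$, in which case $m^2-c^2$ vanishes to order $\ge 2$ and the integral is $\ge \int dr/(r_\infty - r) = \infty$; or $m^\prime(r_\infty)\ne 0$, but then $\dot r$ does not actually vanish at a point with $r=r_\infty$, so $\g$ would cross that parallel and continue — contradicting monotone convergence to $r_\infty$. Hence $T_\g = \int_a^{r_\infty} F_c\,dr = \infty$ by~(\ref{form: general turn angle}). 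If instead $\dot r$ does not tend to $0$, then $|\dot r|$ is bounded below by a positive constant for large $s$, forcing $r_{\g(s)}\to\infty$, which contradicts boundedness; so this subcase does not occur.

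The remaining possibility is that $\dot r$ vanishes twice or more, handled by Fact~\ref{fact: on escaping geodesics}(\ref{fact: escaping geodesics are tangent to parallels only once}): then $\g$ is rotationally invariant, so $r_{\g(s)}$ is periodic (indeed constant after reduction, a parallel with $m^\prime=0$), its image is a parallel circle, and $\dot\th = c/m^2$ is a positive constant, giving $T_\g = \int_0^\infty (c/m^2)\, ds = \infty$. Combining the two cases: whenever $\g$ fails to be escaping, $T_\g=\infty$; equivalently, finite turn angle forces $\g$ escaping. The step I expect to be the main obstacle is the careful bookkeeping in the monotone-bounded case — ruling out that $\g$ limits onto a parallel with $m^\prime\ne 0$ (which superficially looks possible but contradicts the no-second-tangency monotonicity via the reflection argument of Lemma~\ref{lem: no crossing}), and pinning down the divergence of $\int F_c$ at the limiting radius. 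Everything else reduces to the unit-speed identity, Clairaut's relation, and the cited facts.
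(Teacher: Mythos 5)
Your argument is correct in outline, but it takes a more computational route than the paper's at the decisive step. Both proofs start identically: Fact~\ref{fact: on escaping geodesics}(\ref{fact: escaping geodesics are tangent to parallels only once}) reduces everything to a tail of $\g$ on which $r_{\g(s)}$ is monotone, since two or more tangencies force rotation invariance and hence infinite turn angle. From there the paper finishes in one stroke without ever examining $\int F_c$: on that tail $\th_{\g(s)}$ is increasing and bounded (finite turn angle) while $r_{\g(s)}$ is monotone, bounded below via Clairaut, and bounded above if $\g$ is not escaping; so both coordinates converge, $\g(s)$ converges to a point, and the curve has finite length, contradicting that it is unit speed on $[0,\infty)$. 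You instead pass to the limit radius $r_\infty$, deduce $m(r_\infty)=c$, and split on $m^\prime(r_\infty)$. This works, but two points need repair. First, in the subcase $m^\prime(r_\infty)\neq 0$ your stated mechanism (``$\dot r$ does not actually vanish\dots so $\g$ would cross that parallel'') is not the right reason: what kills this subcase is that $\int^{r_\infty}\!dr/\sqrt{1-c^2/m^2}$ converges when $m^\prime(r_\infty)\neq 0$, so the geodesic reaches the parallel $r=r_\infty$ in \emph{finite} arclength and is tangent there (it cannot cross to the side where $m<c$, by Clairaut); that extra tangency, not a crossing, is what contradicts the assumption that $r\to r_\infty$ only as $s\to\infty$. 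Second, a rotation-invariant geodesic is generally not a parallel --- it oscillates between two parallels --- so your parenthetical ``a parallel with $m^\prime=0$'' is wrong, though harmless: your actual estimate $\dot\th=c/m^2\ge c/(\sup m)^2>0$ on the bounded image is what yields $T_\g=\infty$ there. In exchange for the extra local analysis at $r_\infty$, your version identifies exactly how the turn angle diverges; the paper's length argument avoids that analysis entirely.
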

\begin{proof} Note that $\g$ is tangent to
parallels in at most two points, for otherwise
$\g$ is invariant under a rotation about $o$, and hence
its turn angle is infinite. Thus after cutting off a portion
of $\g$ we may assume it is never tangent to a parallel, 
so that $r_{\g(s)}$ is monotone. 
By assumption $\th_{\g(s)}$ is bounded and increasing.
By Clairaut's relation
$m(r_{\g(s)})$ is bounded below, so that
$m(0)=0$ implies that $r_{\g(s)}$ is bounded below.
If $\g$ were not escaping, then $r_{\g(s)}$ would also be 
bounded above, so there would exist a limit of $(r_{\g(s)},\th_{\g(s)})$
and hence the limit of $\g(s)$ as $s\to\infty$, 
contradicting the fact that $\g$ has infinite length.
\end{proof}

\begin{lem} 
\label{lem: m growth} 
If $m^{-2}$ is integrable on $[1,\infty)$, then
\begin{enumerate}\vspace{-5pt} 
\item[\textup{(1)}]
the function $(r\log r)^{^{-\frac{1}{2}}}m(r)$ is unbounded;
\item[\textup{(2)}]
if $G_m\ge 0$, then $m^\prime>0$ for all $r$;
\item[\textup{(3)}]
if $M_m$ is von Mangoldt, then $m^\prime>0$ for all large $r$;
\item[\textup{(4)}]
if either $G_m\ge 0$ or $G_m^\prime\le 0$, then $m(\infty)=\infty$.
\end{enumerate}
\end{lem}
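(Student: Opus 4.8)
The plan is to prove the four items in turn, each time using the hypothesis that $m^{-2}$ is integrable on $[1,\infty)$ through one elementary fact: a function that is bounded below on some $[R,\infty)$ by a positive constant, or by $c/(r\log r)$, is not integrable there (since $\int_R^\infty(r\log r)^{-1}\,dr=\infty$ for $R>1$).

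First I would handle (1) by contraposition. If $(r\log r)^{-1/2}m(r)$ were bounded on some $[R,\infty)$ with $R>1$, say by $\sqrt{C}$, then $m^{-2}\ge C^{-1}(r\log r)^{-1}$ on $[R,\infty)$, contradicting integrability; hence $(r\log r)^{-1/2}m(r)$ is unbounded. Next, for (2): $G_m\ge 0$ gives $m^{\prime\prime}=-G_m m\le 0$, so $m^\prime$ is non-increasing, and if $m^\prime(r_1)\le 0$ for some $r_1$ then $m^\prime\le 0$ on $[r_1,\infty)$, so $m$ is non-increasing and hence bounded there, making $m^{-2}$ bounded below by a positive constant — contradiction. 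Thus $m^\prime>0$ everywhere.

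For (3) I would split cases on the sign of $G_m$: if $G_m\ge 0$ everywhere, (2) applies; otherwise $G_m(r_0)<0$ for some $r_0$, and since $M_m$ is von Mangoldt, $G_m\le G_m(r_0)=:-a<0$ on $[r_0,\infty)$, so $m^{\prime\prime}\ge a\,m>0$ and $m^\prime$ is strictly increasing on $[r_0,\infty)$. Were $m^\prime(r)\le 0$ for arbitrarily large $r$, monotonicity of $m^\prime$ would force $m^\prime\le 0$ on all of $[r_0,\infty)$, hence $m$ bounded there, again contradicting integrability; so $m^\prime>0$ for all large $r$. Finally, (4) follows: by (2) and (3), under either hypothesis $m$ is eventually strictly increasing, so $m(\infty)$ exists in $(0,\infty]$, and if it were finite then $m$ would be bounded and $m^{-2}$ bounded below by a positive constant for large $r$ — contradiction; hence $m(\infty)=\infty$.

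The step needing the most care is (3): a priori $m^\prime$ could be negative on a bounded set before $G_m$ becomes negative, so one must use the strict convexity $m^{\prime\prime}>0$ on $[r_0,\infty)$ — equivalently, monotonicity of $m^\prime$ there — to rule out $m^\prime\le 0$ recurring for arbitrarily large $r$. The remaining arguments are routine.
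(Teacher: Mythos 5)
Your proposal is correct and follows essentially the same route as the paper: non-integrability of $m^{-2}$ when $m$ is bounded (or dominated by $\sqrt{Cr\log r}$), combined with the monotonicity of $m'$ forced by $G_m\ge 0$ or $G_m'\le 0$. The only cosmetic difference is in (3), where you rule out recurrent non-positivity of $m'$ by contradiction while the paper locates a point with $m'>0$ via $\int_{\rho_0}^{\rho}m'=m(\rho)-m(\rho_0)>0$ and propagates it forward; the two arguments are interchangeable.
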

\begin{proof} 
Since $m^{-2}$ is integrable,
the function $(r\log r)^{^{-\frac{1}{2}}}m(r)$ is unbounded,
and in particular, $m$ is unbounded. 
If $G_m\ge 0$ everywhere, then $m^\prime$ is non-increasing
with $m^\prime(0)=1$, and the fact that $m$ is unbounded
implies that $m^\prime>0$ for all $r$.
If $M_m$ is von Mangoldt, and $G_m(\r_0)< 0$, then $G_m< 0$ for $r\ge \r_0$, 
i.e. $m^\prime$ is non-decreasing on $[\r_0, \infty)$.
Since $m$ is unbounded, there is $\r>\r_0$ with $m(\r)>m(\r_0)$
so that $\int_{\r_0}^{\r} m^\prime=m(\r)-m(\r_0)>0$.
Hence $m^\prime$ is positive somewhere
on $(\r_0, \r)$, and therefore on $[\r, \infty)$.
Finally, since $m$ is an unbounded increasing function for
large $r$, the limit 
$\displaystyle{\lim_{r\to\infty}\, m(r)}=m(\infty)$ exists and
equals $\infty$.
\end{proof}

\begin{lem}
If $\g_q$ is escaping, then 
$\displaystyle{\liminf_{r\to\infty}\, m(r)}>m(r_q)$ 
if and only if there is a neighborhood $U$ of $q$
such that $\g_u$ is escaping for each $u\in U$.
\end{lem}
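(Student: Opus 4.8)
The plan is to translate "escaping" into the analytic condition on $m$ via Lemma~\ref{lem: basic choke}, then handle the two implications separately. Recall that $\g_q$ is escaping means $r_{\g_q}$ attains all values in $[r_q,\infty)$; by Lemma~\ref{lem: basic choke} this forces $m(r)>m(r_q)$ for $r>r_q$, and by Lemma~\ref{lem: no crossing} the whole geodesic through $q$ tangent to the parallel $P_q$ stays in the unbounded component of $M_m\setminus P_q$. Conversely, for a nearby point $u$ the Clairaut constant of $\g_u$ is $m(r_u)$, and $\g_u$ is escaping precisely when $\dot r_{\g_u}$ never vanishes on $(r_u,\infty)$, i.e. when $m(r)>m(r_u)$ for all $r>r_u$ (so that $\sqrt{1-m^2(r_u)/m^2(r)}$ stays positive), using Fact~\ref{fact: on escaping geodesics}\eqref{fact: escaping geodesics are tangent to parallels only once} to rule out a second tangency. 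So the statement reduces to a purely function-theoretic claim: $\liminf_{r\to\infty}m(r)>m(r_q)$ holds iff there is $\e>0$ with $m(r)>m(r_q+\delta)$ for all $r>r_q+\delta$ whenever $|\delta|<\e$ (and $r_q-\e>0$).

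For the forward direction, suppose $L:=\liminf_{r\to\infty}m(r)>m(r_q)$. Since $\g_q$ is escaping, $m$ is, for $r$ large, an increasing unbounded function (this is the content of the discussion around Lemmas~\ref{lem: m growth} and \ref{lem: turn angle escaping}; more elementarily, $m(r)>m(r_q)$ for $r>r_q$ together with $\liminf m>m(r_q)$ and continuity gives a threshold $\r$ beyond which $m(r)\ge L-\eta>m(r_q)$). Pick $\e>0$ small enough that $r_q+\e<\r$ and, by continuity of $m$ on the compact interval $[r_q-\e,\r]$ together with the strict inequality $m>m(r_q)$ on $(r_q,\r]$ extended slightly, one gets $\min_{[r_q-\e,\r]}$ arranged so that for each $u$ with $|r_u-r_q|<\e$ one has $m(r_u)<\min\{m(r):r\in[r_u,\r]\}$ and $m(r_u)<L-\eta\le m(r)$ for $r\ge\r$. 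Then $m(r)>m(r_u)$ for all $r>r_u$, so $\dot r_{\g_u}$ never vanishes past $u$, so $r_{\g_u}$ is strictly increasing and, being bounded below with $m(r_{\g_u})$ bounded below, must be unbounded (same argument as in Lemma~\ref{lem: turn angle escaping}); hence $\g_u$ is escaping.

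For the reverse direction, argue contrapositively: assume $\liminf_{r\to\infty}m(r)\le m(r_q)$. Since $\g_q$ is escaping we still have $m(r)>m(r_q)$ for $r>r_q$, so in fact $\liminf_{r\to\infty}m(r)=m(r_q)$ is impossible to beat from below strictly but can be approached; pick a sequence $r_k\to\infty$ with $m(r_k)\to m(r_q)$, i.e. $m(r_k)<m(r_q)+1/k$. Given any neighborhood $U$ of $q$, choose $u\in U$ on the meridian through $q$ with $r_u>r_q$; then $m(r_u)>m(r_q)$, and for $k$ large $m(r_k)<m(r_q)+1/k<m(r_u)$, which means $\dot r_{\g_u}$ must vanish somewhere in $(r_u,\infty)$ — more precisely $r_{\g_u}$ cannot exceed the value where $m$ first returns down to $m(r_u)$, so $r_{\g_u}$ is bounded and $\g_u$ is not escaping (its image lies in the bounded region $m\le m(r_u)$ on that side, by Lemma~\ref{lem: no crossing} applied at the turning parallel). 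Thus no neighborhood of $q$ consists of points with escaping $\g_u$.

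The main obstacle is the bookkeeping in the forward direction: one must choose the neighborhood $U$ uniformly, i.e. produce a single $\e$ that works simultaneously as a lower bound on $r_u$ and as control on $m(r_u)$ relative to the global minimum of $m$ on $[r_u,\infty)$. This is where one uses that the "escaping" hypothesis on $\g_q$ already gives monotonicity and unboundedness of $m$ for large $r$, so that the infimum $\inf_{[r_u,\infty)}m$ is actually attained on a compact interval and depends continuously on $r_u$; continuity of $m$ then closes the estimate. Everything else is a direct application of Clairaut's relation and the facts already recorded.
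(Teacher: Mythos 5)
Your two directions are not equally solid, and the forward direction has a genuine gap. There you claim that the uniform choice of $\e$ follows from ``continuity of $m$ on the compact interval $[r_q-\e,\r]$ together with the strict inequality $m>m(r_q)$ on $(r_q,\r]$''. Continuity is not enough. What you must produce is $m(r)>m(r_u)$ for \emph{all} $r>r_u$, simultaneously for all $r_u$ in an interval around $r_q$, and the hypothesis $m(r)>m(r_q)$ for $r>r_q$ does not transfer from $r_q$ to nearby $r_u$: if $m$ had small local maxima accumulating at $r_q$ from the right (still with $m>m(r_q)$ beyond $r_q$, so $\g_q$ escapes), then points $u$ at those maxima arbitrarily close to $q$ would have $\g_u$ trapped. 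The missing ingredient is $m^\prime(r_q)>0$, which Lemma~\ref{lem: basic choke} gives you but which you never invoke; it makes $m$ strictly increasing on some $[r_q-\eta,r_q+\eta]$, and combining that with the positive gap $\inf_{r\ge r_q+\eta}m-m(r_q)>0$ (this uses both $\liminf_{r\to\infty}m>m(r_q)$ and compactness, exactly as in your threshold argument) closes the estimate. The same local positivity of $m^\prime$ is also needed to rule out $m^\prime(r_u)=0$, in which case $\g_u$ \emph{is} the parallel and is not escaping even though $m(r)>m(r_u)$ for $r>r_u$; so your stated equivalence ``$\g_u$ escaping iff $m(r)>m(r_u)$ for $r>r_u$'' is not quite right. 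As a smaller matter, ``$m(r_u)<\min\{m(r):r\in[r_u,\r]\}$'' is literally impossible since the interval contains $r_u$.

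Two further remarks. Your assertion (made twice) that $\g_q$ escaping makes $m$ ``increasing and unbounded for large $r$'' is false: escaping geodesics can have infinite turn angle, so Lemma~\ref{lem: m growth} does not apply, and $m$ may stay bounded; keep only your elementary parenthetical argument producing the threshold $\r$. Your reverse (contrapositive) direction is correct and is essentially the paper's first half: Clairaut's relation traps $\g_u$ below a distant parallel where $m$ dips under $m(r_u)$. For the forward implication the paper argues differently, contrapositively, from a sequence of non-escaping $\g_{u_i}$ with $u_i\to q$: it takes the radii $R_i$ of the smallest balls about $o$ containing $\g_{u_i}$, notes $R_i\to\infty$, and reads off $m(R_i)=m(r_{u_i})\to m(r_q)$ from Clairaut, so $\liminf_{r\to\infty}m=m(r_q)$. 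That route avoids the uniform-neighborhood bookkeeping that your direct construction requires; your direct route is viable too, but only once the $m^\prime(r_q)>0$ input is put in.
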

\begin{proof} 
First, recall that  $m(r)>m(r_q)$ for $r>r_q$
and $m^\prime(r_q)>0$ by Lemma~\ref{lem: basic choke}.
We shall prove the contrapositive:
$\displaystyle{\liminf_{r\to\infty}\, m(r)}=m(r_q)$
if and only if there is a sequence $u_i\to q$ such that
$\g_{u_i}$ is not escaping.

If there is a sequence $z_i\in M_m$ with
$r_{z_i}\to\infty$ and $m(r_{z_i})\to m(r_q)$,
then there are points $u_i\to q$ on $\mu_q$ with 
$m(r_{u_i})=m(r_{z_i})$.
If $\g_{u_i}$ is escaping, then it meets the parallel 
through $z_i$, so 
Clairaut's relation implies that $\g_{u_i}$ is tangent
to the parallels through $u_i$ and $z_i$, 
which cannot happen for an escaping geodesic. 

Conversely,
suppose there are $u_i\to q$ such that $\g_i:=\g_{u_i}$ is not escaping. 
Let $R_i$ be the radius of the smallest ball about $o$
that contains $\g_i$, and let $P_i$ be its boundary 
parallel. Note that $R_i\to \infty$ 
as $\g_i$ converges to $\g_q$ on compact sets
and $\g_q$ is escaping, and hence
$\displaystyle{\liminf_{r\to\infty}\, m(r)}=
\displaystyle{\lim_{r\to\infty}\, m(R_i)}$.
For each $i$
there is a sequence $s_{i,j}$ such that the $r$-coordinates of 
$\g_i(s_{i,j})$ converge to $R_i$, which implies 
$\k_{\g_i(s_{i,j})}\to \frac{\pi}{2}$
as $j\to\infty$ and $i$ is fixed. 
(Note that if $\g_i$ is tangent to $P_i$, then
$s_{i,j}$ is independent of $j$, namely, 
$\g(s_{i,j})$ is the point of tangency). 
By Clairaut's relation,
$m(R_i)=m(r_{u_i})$, hence 
$\displaystyle{\liminf_{r\to\infty}\, m(r)}=m(r_q)$.
\end{proof}

\begin{lem} \label{lem: ray iff at most pi}
If $M_m$ is von Mangoldt, then
a geodesic $\g\co [0,\infty)\to M_m\setminus\{o\}$ is 
a ray if and only if $T_\g\le \pi$.
\end{lem}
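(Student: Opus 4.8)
The statement to prove is Lemma \ref{lem: ray iff at most pi}: in a von Mangoldt plane, a geodesic $\g\co[0,\infty)\to M_m\setminus\{o\}$ is a ray if and only if $T_\g\le\pi$. One direction is essentially already done: if $\g$ is a ray that misses $o$, then $T_\g\le\pi$ by Example \ref{ex: ray turn angle}, which uses only the reflectional symmetry of $M_m$ (two points with angular coordinates differing by $\pi$ are joined by two segments). So the real content is the converse: if $T_\g\le\pi$ then $\g$ is a ray, and this is where the von Mangoldt hypothesis must enter.

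For the converse, first I would invoke Lemma \ref{lem: turn angle escaping} to note that finite turn angle forces $\g$ to be escaping; moreover, discarding an initial segment, I may assume $\g$ is never tangent to a parallel, so $r_{\g(s)}$ is strictly monotone, and after possibly reflecting I may take $\g$ counterclockwise with $r_{\g(s)}$ increasing (the case where $\g$ is tangent to a meridian, i.e. $\g=\g_q$ type with $c=0$, is the classical fact that meridians are rays and can be handled separately). Now suppose, for contradiction, that $\g$ is not a ray: then some subarc $\g|_{[s_1,s_2]}$ fails to be minimizing, so there is a strictly shorter path from $\g(s_1)$ to $\g(s_2)$, hence a segment $\s$ from $\g(s_1)$ to $\g(s_2)$ that is not a subarc of $\g$. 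The key structural input is the description of the cut locus recalled in the text: in a von Mangoldt plane the cut locus of any point $q\neq o$ is a ray lying on the meridian opposite $q$. So $\g(s_2)$ lies in the cut locus of $\g(s_1)$, which means $\g(s_2)$ sits on the meridian opposite $\g(s_1)$; equivalently the angular coordinates of $\g(s_1)$ and $\g(s_2)$ differ by exactly $\pi$. But then $T_{\g|_{[s_1,s_2]}}=\pi$ (using that $\dot\th\ge0$ and $\th$ increases by a multiple of... here monotonicity of $\th$ from Clairaut's relation pins it to exactly $\pi$, not $3\pi$ etc., because $\g$ is escaping and tangent to a parallel at most once so $\th$ sweeps a total of less than $2\pi$ — this needs a small argument via (\ref{form: general turn angle})). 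Since $\g$ continues past $s_2$ with $\dot\th\ge0$ and is non-constant, $T_\g>\pi$, contradicting $T_\g\le\pi$.

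The step I expect to be the main obstacle is pinning down exactly when $\g(s_2)$ can fail to be a point \emph{before} the cut point along $\g$ — that is, ruling out that the first failure of minimality happens with $T<\pi$. The clean way is: let $s^*$ be the supremum of $s_2$ such that $\g|_{[s_1,s_2]}$ is minimizing; then $\g(s^*)$ is the cut point of $\g(s_1)$ along $\g$, so it lies on the opposite meridian, giving $\th_{\g(s^*)}-\th_{\g(s_1)}=\pi$ and hence $T_{\g|_{[s_1,s^*]}}\ge\pi$; if $s^*<\infty$ then $\g$ runs strictly past $s^*$ and $T_\g>\pi$. I would need to be careful that $s_1$ can be taken to be $0$ (or reduce to that case), and that the turn angle computation correctly accounts for the single possible tangency to a parallel — but both are routine given Fact \ref{fact: on escaping geodesics} and formula (\ref{form: general turn angle}). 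The only genuinely von-Mangoldt-specific ingredient is the cut-locus-is-an-opposite-meridian-ray fact, which the excerpt grants us from \cite{Ele, Tan-cut}; everything else is bookkeeping with Clairaut's relation and monotonicity.
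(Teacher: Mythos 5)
Your proposal is correct and follows essentially the same route as the paper: the ``only if'' direction is Example~\ref{ex: ray turn angle}, and the converse rests on Tanaka's theorem that the cut locus of $\g(0)$ lies on the opposite meridian, so a non-ray must reach turn angle $\pi$ at its cut point and then exceed it since $\dot\th>0$ (note a geodesic in $M_m\setminus\{o\}$ can never be tangent to a meridian, so your separate ``$c=0$'' case is vacuous and the escaping/tangency bookkeeping is unnecessary). The paper's proof is exactly this, compressed to three lines.
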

\begin{proof} The ``only if'' direction holds 
even when $M_m$ is not von Mangoldt by 
Example~\ref{ex: ray turn angle}. 
Conversely, if $\g$ is not a ray, then $\g$ meets the cut locus 
of $q$, which by~\cite{Tan-cut} is a subset of the 
opposite meridian $\tau_{\g(0)}\vert_{(r_{\g(0)},\infty)}$.
Thus $T_\g>\pi$. 
\end{proof}

\begin{lem}\label{lem: above rays are rays}
If $\g$ is a ray in a von Mangoldt plane,
and if $\s$ is a geodesic with $\s(0)=\g(0)$ and
$\k_{\g(0)}>\k_{\s(0)}$, then $\s$ is a ray and $T_\s\le T_\g$.
\end{lem}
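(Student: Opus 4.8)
The plan is to reduce the assertion to the single inequality $T_\s\le T_\g$ and then quote Lemma~\ref{lem: ray iff at most pi}. Write $q=\g(0)=\s(0)$ and $r_0=r_q$. If $\k_{\s(0)}=0$ then $\s=\mu_q$, which is a ray (the radial coordinate is $1$-Lipschitz, so the outward meridian is minimizing) with $T_\s=0\le T_\g$, and there is nothing to prove; so assume $0<\k_{\s(0)}<\k_{\g(0)}$. Since $T_\g$ is defined, $\g$ avoids $o$, so $\k_{\g(0)}<\pi$, and both $\g$ and $\s$ are counterclockwise geodesics issuing from $q$ with Clairaut constants $c_\g=m(r_0)\sin\k_{\g(0)}>0$ and $c_\s=m(r_0)\sin\k_{\s(0)}>0$ (neither meets $o$, since $\dot\th=c/m^2>0$). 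Granting $T_\s\le T_\g$, Example~\ref{ex: ray turn angle} gives $T_\s\le T_\g\le\pi$, whence $\s$ is a ray by Lemma~\ref{lem: ray iff at most pi}, completing the proof.

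To get $T_\s\le T_\g$ I would first record the radial behaviour of a ray. Since $\g$ is a ray it is escaping (Lemma~\ref{lem: turn angle escaping}), so by Fact~\ref{fact: on escaping geodesics} it is tangent to a parallel at most once; combined with Lemma~\ref{lem: no crossing} this gives two pictures. If $\k_{\g(0)}\le\frac{\pi}{2}$, then $r_{\g(s)}$ increases monotonically from $r_0$ to $\infty$, $m(r)>c_\g$ for all $r>r_0$, and
\[
T_\g=\int_{r_0}^{\infty}F_{c_\g}(r)\,dr .
\]
If $\k_{\g(0)}>\frac{\pi}{2}$, then $r_{\g(s)}$ first decreases to a value $r_{\min}<r_0$ with $m(r_{\min})=c_\g$ and then increases monotonically to $\infty$, $m(r)>c_\g$ for $r>r_{\min}$, and $T_\g=2\int_{r_{\min}}^{r_0}F_{c_\g}(r)\,dr+\int_{r_0}^{\infty}F_{c_\g}(r)\,dr$. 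The same description applies to any escaping geodesic from $q$, while a non-escaping geodesic from $q$ has infinite turn angle, again by Lemma~\ref{lem: turn angle escaping}. I will use repeatedly that for fixed $r$ the function $c\mapsto F_c(r)$ is strictly increasing on $(0,m(r))$.

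The heart of the matter is that the turn angle of the geodesic issuing from $q$ is a non-decreasing function of the initial angle $\k_0\in[0,\pi)$: given this, $\k_{\s(0)}<\k_{\g(0)}$ yields $T_\s\le T_\g<\infty$ at once (in particular $\s$ is escaping, so $T_\s$ really is given by the formulas above). In the favourable range $\k_{\s(0)}<\k_{\g(0)}\le\frac{\pi}{2}$ this is immediate: then $c_\s<c_\g$, and since $\g$ is escaping with monotone radial coordinate we have $m(r)>c_\g>c_\s$ for every $r>r_0$, so $\s$ is escaping and monotone too and $T_\s=\int_{r_0}^{\infty}F_{c_\s}\le\int_{r_0}^{\infty}F_{c_\g}=T_\g$. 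The substantive case is $\k_{\g(0)}>\frac{\pi}{2}$, where the ``dip'' term $2\int_{r_{\min}}^{r_0}F_c$ is present and, as $\k_0$ grows past $\frac{\pi}{2}$, the Clairaut constant $c=m(r_0)\sin\k_0$ actually \emph{decreases}; using the reflectional symmetry of $\g$ about the meridian through its point of tangency one reduces the case $\k_{\s(0)}\le\pi-\k_{\g(0)}$ to the favourable range (applied to the complementary ray from $q$), and the remaining range $\pi-\k_{\g(0)}<\k_{\s(0)}\le\frac{\pi}{2}$ (where $c_\s>c_\g$) reduces to the integral inequality
\[
2\int_{r_{\min}}^{r_0}F_{c_\g}(r)\,dr\ \ge\ \int_{r_0}^{\infty}\bigl(F_{c_\s}(r)-F_{c_\g}(r)\bigr)\,dr .
\]
I would establish this by differentiating the total turn angle in $\k_0$ --- justified by the calculus lemma of Appendix~\ref{sec: calc lem}, which also handles the movable endpoint $r_{\min}=m^{-1}(c)$ --- and checking the derivative is $\ge 0$; this is the one and only place the von Mangoldt hypothesis ($G_m=-m''/m$ non-increasing) enters, and the constraint $T_\g\le\pi$ may be invoked here to exclude large dips. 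Proving this derivative/integral inequality is the main obstacle; everything else is bookkeeping with Clairaut's relation and Fact~\ref{fact: on escaping geodesics}.
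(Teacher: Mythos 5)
Your overall strategy --- prove $T_\s\le T_\g$ and then quote Lemma~\ref{lem: ray iff at most pi} --- is legitimate (that lemma precedes this one, so there is no circularity), and the sub-cases $\k_{\g(0)}\le\frac{\pi}{2}$ and $\k_{\s(0)}\le\pi-\k_{\g(0)}$ are genuinely disposed of by the pointwise comparison $F_{c_\s}\le F_{c_\g}$. But there is a real gap exactly where you place ``the main obstacle'': the inequality
\[
2\int_{r_{\min}}^{r_0}F_{c_\g}(r)\,dr\ \ge\ \int_{r_0}^{\infty}\bigl(F_{c_\s}(r)-F_{c_\g}(r)\bigr)\,dr
\]
is never proved, and the tools you cite do not prove it. Lemma~\ref{lem: calc} of Appendix~\ref{sec: calc lem} is local: it gives differentiability of $T(c)$ and the sign of $T'$ only in the limit $c\to c_q-$, i.e.\ monotonicity of the turn angle only for initial angles near $\frac{\pi}{2}$; it says nothing about the sign of $dT/d\k_0$ elsewhere on $[0,\pi)$. (Moreover the paper's actual monotonicity statement, Lemma~\ref{lem: turn angle cont and diff}, is downstream of the present lemma via Lemma~\ref{lem: geod just below ray}, so it cannot be invoked here.) Two further points are unaddressed: your case list omits the range $\frac{\pi}{2}<\k_{\s(0)}<\k_{\g(0)}$, where $\s$ has its own dip and the comparison involves two different tangency radii; and before writing $T_\s$ as these integrals you must know $\s$ is escaping, which in the range $c_\s>c_\g$ does not follow from $m(r)>c_\g$ for $r>r_{\min}$ (all that $\g$ being a ray gives via Lemma~\ref{lem: basic choke}), since $m$ could a priori return to the value $c_\s$ farther out and trap $\s$. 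Finally, the case $\k_{\g(0)}=\pi$ cannot be discarded on the grounds that $T_\g$ is defined: the lemma allows $\g=\tau_q$, in which case $q$ is a pole by \cite[Lemma 7.3.1]{SST} and $\s$ is a ray for that reason.

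The paper sidesteps all of this analysis with a short synthetic argument you should adopt: by Tanaka's theorem the cut locus of $q$ lies on the opposite meridian $\tau_q\vert_{(r_q,\infty)}$, so if $\s$ were not a ray it would remain minimizing until it crossed that meridian; but $\s$ starts inside the sector bounded by the rays $\g$ and $\mu_q$, so to reach the opposite meridian it would first have to cross $\g$ or $\mu_q$ at some point $z\neq q$, and a minimizing geodesic from $q$ cannot meet a ray from $q$ at such a $z$ and continue to minimize past it. Hence $\s$ is a ray and stays in the sector, and $T_\s\le T_\g$ comes for free from the sector containment rather than from an integral estimate.
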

\begin{proof} Set $q=\g(0)$.
If $\k_{\g(0)}=\pi$, then $\g=\tau_q$, so $\tau_q$ is a ray,
which in a von Mangoldt plane implies that $q$ is a 
pole~\cite[Lemma 7.3.1]{SST}, so that $\s$ is also a ray.
If $\k_{\g(0)}<\pi$  and $\s$ is not a ray, then $\s$ is minimizing
until it crosses the opposite meridian 
$\tau_q\vert_{(r_q,\infty)}$~\cite{Tan-cut}.
Near $q$ the geodesic $\s$ lies in the
region of $M_m$ bounded by $\g$ and $\mu_{q}$
hence before crossing the opposite meridian 
$\s$ must intersect $\g$ or $\mu_{q}$, so they would not be rays. 
Finally, $T_\s\le T_\g$ holds as $\s$ lies in the sector between
$\g$ and $\mu_{q}$.
\end{proof}

\begin{lem}\label{lem: crit point and ray tangent to parallel}
If $M_m$ is von Mangoldt and $q\neq o$, then $\g_q$ is a ray
if and only if  $q\in\Cr_m$.
\end{lem}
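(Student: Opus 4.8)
The plan is to prove both implications using the characterization of rays in von Mangoldt planes via the turn angle (Lemma~\ref{lem: ray iff at most pi}), together with the earlier lemmas comparing turn angles of geodesics issuing from the same point, and the definition of a critical point of infinity.

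For the direction ``$\g_q$ is a ray $\Rightarrow q\in\Cr_m$'': recall $\g_q$ starts at $q$ in the direction of $\d_\th$, so $\k_{\g_q(0)}=\frac{\pi}{2}$. If $\g_q$ is a ray, then its reflection $\bar\g_q$ across the meridian through $q$ (the clockwise geodesic tangent to the parallel at $q$) is also a ray, again by the reflectional symmetry of $M_m$. Now given any unit vector $v$ at $q$, I want to show $v$ makes angle $\le\frac{\pi}{2}$ with one of these two rays. Write $v$ in terms of its angle with $\d_r$; by the convention, we may assume $v$ points into the counterclockwise half-space (or is tangent to a meridian), and then either $v$ makes angle $\le\frac{\pi}{2}$ with $\d_\th=\dot\g_q(0)$, or $v$ makes angle $<\frac{\pi}{2}$ with $-\d_\th=\dot{\bar\g}_q(0)$; the two cases cover all directions since $\d_\th$ and $-\d_\th$ are antipodal in the tangent circle. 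Hence every unit tangent vector at $q$ makes angle $\le\frac{\pi}{2}$ with a ray from $q$, i.e. $q\in\Cr_m$.

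For the converse ``$q\in\Cr_m \Rightarrow \g_q$ is a ray'': suppose $q\in\Cr_m$, so the vector $-\d_r$ at $q$ (pointing toward $o$) makes angle $\le\frac{\pi}{2}$ with some ray $\xi$ starting at $q$. Thus $\k_{\xi(0)}\ge\frac{\pi}{2}$. If $\k_{\xi(0)}=\pi$ then $\xi=\tau_q$ is a ray, so $q$ is a pole by \cite[Lemma 7.3.1]{SST}, and then $\g_q$ is a ray as well. If $\frac{\pi}{2}\le\k_{\xi(0)}<\pi$, I compare $\g_q$ with $\xi$ using Lemma~\ref{lem: above rays are rays}: since $\k_{\xi(0)}\ge\frac{\pi}{2}=\k_{\g_q(0)}$, that lemma (applied with the ray $\xi$ and the geodesic $\g_q$, in the case $\k_{\xi(0)}>\k_{\g_q(0)}$) gives that $\g_q$ is a ray with $T_{\g_q}\le T_\xi$. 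The boundary case $\k_{\xi(0)}=\frac{\pi}{2}$ means $\xi=\g_q$ up to the clockwise/counterclockwise convention, so $\g_q$ is a ray directly. This completes the argument.

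The main obstacle I expect is the bookkeeping around the clockwise/counterclockwise convention and the reflectional symmetry: one must be careful that ``$\g_q$ is a ray'' is equivalent to ``the parallel-tangent geodesic at $q$ in the clockwise direction is a ray,'' and that for an arbitrary unit vector $v$ at $q$ exactly one of the two tangent-to-parallel directions lies within angle $\frac{\pi}{2}$ of $v$ — this uses that the two directions are antipodal, so the two closed half-discs they determine cover the whole tangent circle. A secondary subtlety is the application of Lemma~\ref{lem: above rays are rays}, which is stated with the strict inequality $\k_{\g(0)}>\k_{\s(0)}$; the equality case $\k_{\xi(0)}=\frac{\pi}{2}$ must be dispatched separately (it is trivial, since then $\xi$ and $\g_q$ are the same geodesic up to reflection), and the pole case $\k_{\xi(0)}=\pi$ is handled via \cite[Lemma 7.3.1]{SST} as above.
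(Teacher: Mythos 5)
Your proof is correct and follows essentially the same route as the paper: the forward direction is the reflection/antipodal-directions symmetry argument, and the converse applies the definition of $\Cr_m$ to the vector $-\d_r$ to produce a ray $\xi$ with $\k_{\xi(0)}\ge\frac{\pi}{2}$ and then invokes Lemma~\ref{lem: above rays are rays} (with the pole case via \cite[Lemma 7.3.1]{SST}). Your explicit handling of the boundary cases $\k_{\xi(0)}=\frac{\pi}{2}$ and $\k_{\xi(0)}=\pi$ is a minor tidying of details the paper leaves implicit.
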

\begin{proof} 
If $\g_q$ is a ray, then $q\in\Cr_m$ by symmetry.
If $q\in\Cr_m$, then either $q$ is a pole and 
there is a ray in any direction, or
$q$ is not a pole. In the latter case
$\tau_q$ is not a ray~\cite[Lemma 7.3.1]{SST}, hence by the
definition of $\Cr_m$ there is a ray $\g$
with $\k_{\g(0)}\ge \frac{\pi}{2}$, so $\g_q$ is a ray 
by Lemma~\ref{lem: above rays are rays}.
\end{proof}

Recall that $\hat\k(r_q)$ is the maximum of the angles
formed by $\mu_q$ and rays emanating from $q\neq o$,
and $\xi_q$ is the ray for which the maximum is attained.
It is immediate from definitions that
$q\in\Cr_m$ if and only if
$\hat\k(r_q)\ge \frac{\pi}{2}$.
Lemmas~\ref{lem: crit is o}, 
\ref{lem: approx by segments}, \ref{lem: max deviat} below
were suggested by the referee.

\begin{lem} 
\label{lem: crit is o}
$\Cr_m\neq\{o\}$ if and only if\, 
$\displaystyle{\liminf_{r\to\infty}\, m}>0$ 
and $\int_1^\infty m^{-2}$ is finite. 
\end{lem}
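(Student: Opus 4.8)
The plan is to reduce the statement to conditions already appearing in the excerpt, namely Lemma~\ref{lem: crit point and ray tangent to parallel} (which says for a von Mangoldt plane that $q\in\Cr_m$ with $q\neq o$ iff $\g_q$ is a ray) together with Lemma~\ref{lem: ray iff at most pi} (which says $\g_q$ is a ray iff $T_{\g_q}\le\pi$). For the ``only if'' direction I would argue directly from Example~\ref{ex: ray exists implies properties of m}: if $\Cr_m\neq\{o\}$, then some $q\neq o$ lies in $\Cr_m$, so $\g_q$ is a ray, so $T_{\g_q}$ is finite, and Example~\ref{ex: ray exists implies properties of m} immediately gives that $m^{-2}$ is integrable on $[1,\infty)$ and $m^\prime(r_q)\neq 0$. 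The integrability of $m^{-2}$ forces $m(\infty)=\infty$ by Lemma~\ref{lem: m growth}(4), so in particular $\liminf_{r\to\infty}m>0$; this settles one direction.

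For the ``if'' direction, assume $\liminf_{r\to\infty}m>0$ and $\int_1^\infty m^{-2}<\infty$. I want to produce a point $q\neq o$ whose turn angle $T_{\g_q}$ is at most $\pi$, and then invoke Lemmas~\ref{lem: ray iff at most pi} and \ref{lem: crit point and ray tangent to parallel}. By Lemma~\ref{lem: m growth}(3), for a von Mangoldt plane with $m^{-2}$ integrable we have $m^\prime>0$ for all large $r$, and $m(\infty)=\infty$ by Lemma~\ref{lem: m growth}(4); fix $\r$ past which $m$ is strictly increasing. For $x\ge\r$ the geodesic $\g_q$ with $r_q=x$ is escaping (its Clairaut constant is $m(x)$, which is strictly below $m(r)$ for $r>x$ since $m$ is increasing there, so $\g_q$ never returns to the parallel through $q$), and its turn angle is given by the convergent integral
\[
T_{\g_q}=\int_x^\infty F_{m(x)}(r)\,dr
=\int_x^\infty\frac{m(x)\,dr}{m(r)\sqrt{m^2(r)-m^2(x)}}.
\]
The key analytic point is that this integral tends to $0$ as $x\to\infty$: after the substitution $u=m(r)$ (valid on $[x,\infty)$ since $m^\prime>0$ there) it becomes $\int_{m(x)}^\infty \frac{m(x)\,du}{u\sqrt{u^2-m(x)^2}\,m^\prime(m^{-1}(u))}$, and one checks this $\to 0$ using that $m(x)\to\infty$ while $m^{-2}$ is integrable — this is essentially the statement that convergence of the tail integral $\int_x^\infty m^{-2}$ to $0$ dominates $T_{\g_q}$ up to bounded factors, since $\sqrt{m^2(r)-m^2(x)}$ is comparable to $m(r)$ once $r$ is somewhat larger than $x$, and the part of the integral where $r$ is close to $x$ contributes a term controlled by $1/(m(x)\,m^\prime(x)^{\!?})$... this comparison is the main obstacle and I expect it will require the calculus lemma of Appendix~\ref{sec: calc lem} or a direct splitting of the integral at, say, $m(r)=2m(x)$.

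Granting that $T_{\g_q}\to0$ as $x\to\infty$, pick $x$ large enough that $T_{\g_q}\le\pi$; then $\g_q$ is a ray by Lemma~\ref{lem: ray iff at most pi}, hence $q\in\Cr_m$ by Lemma~\ref{lem: crit point and ray tangent to parallel}, and since $q\neq o$ we conclude $\Cr_m\neq\{o\}$. The only place I expect genuine work is the estimate $T_{\g_q}\to0$; everything else is bookkeeping with results already established in the excerpt. An alternative to the substitution argument, if the clean comparison is awkward, is to bound $F_{m(x)}(r)\le \frac{m(x)}{m(r)\sqrt{m^2(r)-m^2(x)}}$ on $[x,2x']$ where $m(x')=2m(x)$ by an explicit elementary antiderivative (giving an $\arccos$ term that stays bounded), and bound the tail $\int_{x'}^\infty$ by $\tfrac{1}{\sqrt3}\int_{x'}^\infty m^{-2}$, which is $\le\tfrac{1}{\sqrt3}\int_{x}^\infty m^{-2}\to0$; the near-diagonal piece then also goes to $0$ because $m(x)/m^\prime$ stays bounded along a subsequence by Lemma~\ref{lem: m growth}(1). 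I would write up whichever of these two routes is shorter once the constants are pinned down.
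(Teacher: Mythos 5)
Your ``only if'' direction is essentially sound for von Mangoldt planes, but note that the lemma carries no von Mangoldt hypothesis and is applied in the paper to planes that are merely nonnegatively curved (part (ii) of Theorem~\ref{thm-intro: nonneg curv}); your route through Lemma~\ref{lem: crit point and ray tangent to parallel}, Lemma~\ref{lem: ray iff at most pi} and Lemma~\ref{lem: m growth}(3)--(4) uses that hypothesis at every step. The paper avoids this by arguing directly with the ray $\xi_q$ and quoting Tanaka's results (\cite[Lemma 1.3, Proposition 1.7]{Tan-ball-poles-ex}): if either $\liminf_{r\to\infty}m=0$ or $\int_1^\infty m^{-2}=\infty$, then $\mu_q$ is the only ray from $q$, so no $q\neq o$ can be critical.

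The ``if'' direction has a genuine gap: the key analytic claim that $T_{\g_q}\to 0$ as $r_q\to\infty$ is false. The near-diagonal piece of the integral does not vanish; it contributes an $\arccos$ term of size comparable to $1/m^\prime(r_q)$, which is bounded \emph{below}, not above. Concretely, on the standard plane $m(r)=r$ one has $T_{\g_q}=\pi/2$ for every $q$ (formula (\ref{form: zero curv turn angle}) with $a=1$), and more generally on any plane that is conical near infinity with slope $a=m^\prime(\infty)$ one gets $T_{\g_q}\to \pi/(2a)$. Taking $a<\tfrac12$ (such planes exist and are von Mangoldt by Theorem~\ref{thm-intro: slope realized}) gives $T_{\g_q}>\pi$ for \emph{all} large $r_q$, even though both hypotheses of the lemma hold and $\Cr_m$ is a ball of positive radius; so no far-away basepoint can ever work, and the strategy of sending $x\to\infty$ cannot be repaired by sharpening the estimates. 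The paper's proof of this direction is instead a one-liner: by the main theorem of~\cite{Tan-ball-poles-ex}, the two hypotheses are exactly the condition for the ball of poles to have positive radius, and every pole lies in $\Cr_m$. If you want to avoid citing that result, you must locate a critical point \emph{near the origin} (where the turn angle is small because the metric is nearly flat and, more to the point, where poles live), not near infinity.
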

\begin{proof} 
The ``if'' direction holds because by the main result 
of~\cite{Tan-ball-poles-ex} 
the assumptions imply that the ball of poles has a positive radius.
Conversely, if $q\in\Cr_m\,\mbox{--}\,\{o\}$,
then $\xi_q$ is a ray different from $\mu_q$.
By~\cite[Lemma 1.3, Proposition 1.7]{Tan-ball-poles-ex} 
if either $\displaystyle{\liminf_{r\to\infty}\, m}=0$ or
$\int_1^\infty m^{-2}=\infty$, then $\mu_q$
is the only ray emanating from $q$.  
\end{proof}

\begin{lem}\label{lem: approx by segments}
$\xi_q$ is the limit
of the segments $[q,\tau_q(s)]$ as $s\to\infty$.
\end{lem}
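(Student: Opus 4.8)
The plan is to show that every subsequential limit of the segments $\sigma_s:=[q,\tau_q(s)]$, as $s\to\infty$, is a ray from $q$, and then to identify that ray with $\xi_q$ by looking at the angle its initial velocity makes with $\d_r$.

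First I would record a precompactness statement. Since $d(q,\tau_q(s))\ge d(o,\tau_q(s))-d(o,q)=s-2r_q\to\infty$, the lengths of the $\sigma_s$ go to infinity, and their initial velocities lie on the compact unit circle in $T_qM_m$; as geodesics of the complete manifold $M_m$ depend continuously on initial data, any sequence $s_i\to\infty$ has a subsequence along which $\sigma_{s_i}$ converges, uniformly on compact sets, to a unit-speed geodesic $\sigma\co[0,\infty)\to M_m$ with $\sigma(0)=q$; and $\sigma$ is a ray because each $\sigma_{s_i}|_{[0,t]}$ is a segment once $t\le d(q,\tau_q(s_i))$ and the distance function is continuous. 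By the standing convention $\sigma$, like $\xi_q$, is counterclockwise or tangent to a meridian, so $\k_{\sigma(0)}\le\hat\k(r_q)=\k_{\xi_q(0)}$.

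Next I would prove the reverse inequality $\k_{\sigma(0)}\ge\k_{\xi_q(0)}$ by contradiction. If $\k_{\sigma(0)}<\k_{\xi_q(0)}$, then $\k_{\sigma_{s_i}(0)}<\k_{\xi_q(0)}$ for all large $i$, so Lemma~\ref{lem: above rays are rays}, applied with the ray $\xi_q$, would force the geodesic $\bar\g_i\co[0,\infty)\to M_m$ extending $\sigma_{s_i}$ to be a ray. But $\sigma_{s_i}$ reaches the meridian opposite $q$ at its endpoint $\tau_q(s_i)$, and since in a von Mangoldt plane the cut locus of $q$ is exactly that opposite meridian ray, $\sigma_{s_i}$ meets the opposite meridian only there; hence $T_{\sigma_{s_i}}=\pi$ exactly. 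The Clairaut constant $c=m(r_q)\sin\k_{\sigma_{s_i}(0)}$ is positive, so $\dot\th=c/m^2>0$ along $\bar\g_i$ and $\bar\g_i$ avoids $o$; therefore prolonging $\bar\g_i$ past $\tau_q(s_i)$ pushes its turn angle strictly above $\pi$, which by Example~\ref{ex: ray turn angle} is impossible for a ray. (If $\hat\k(r_q)=0$ the inequality holds trivially, and if $q$ is a pole then $\xi_q=\tau_q$ and $\sigma_{s_i}=\tau_q|_{[0,s_i]}$, so the lemma is immediate in that case.)

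Combining, $\k_{\sigma(0)}=\k_{\xi_q(0)}$, and since a geodesic issuing from $q$ that is counterclockwise or tangent to a meridian is determined by the angle of its initial velocity with $\d_r$, this gives $\sigma=\xi_q$. As every subsequential limit of $(\sigma_s)$ is thus $\xi_q$, the precompactness above upgrades this to convergence, $\sigma_s\to\xi_q$ as $s\to\infty$. The point I expect to need the most care is the turn-angle bookkeeping: one has to rule out that $\sigma_{s_i}$ winds around $o$ before first reaching the opposite meridian (so its turn angle is precisely $\pi$, not $\pi+2\pi k$) and that the prolongation really overshoots $\pi$ — both resting on the von Mangoldt cut-locus description and positivity of the Clairaut constant.
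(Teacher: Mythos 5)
Your proof is correct, and it follows the same skeleton as the paper's (extract a subsequential limit ray $\s$ of the segments, show $\k_{\s(0)}\ge\hat\k(r_q)$, and conclude by maximality of $\hat\k(r_q)$), but you obtain the key inequality by a different mechanism. The paper argues by separation: $\xi_q$, being a ray, never reaches the opposite meridian, and being minimizing it meets each segment $[q,\tau_q(s)]$ only at $q$; since that segment terminates on the opposite meridian, it must leave $q$ at angle $\ge\hat\k(r_q)$. You instead assume the angle is smaller, upgrade the prolongation of the segment to a ray via Lemma~\ref{lem: above rays are rays}, and contradict the bound $T\le\pi$ of Example~\ref{ex: ray turn angle}, because the segment already turns by exactly $\pi$ in reaching $\tau_q(s_i)$ and its positive Clairaut constant forces any prolongation to overshoot $\pi$. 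Both routes rest on the same underlying facts; the paper's is shorter, while yours makes the quantitative turn-angle picture explicit and reuses lemmas already established. One step to tighten: that $[q,\tau_q(s_i)]$ meets the opposite meridian only at its endpoint does not follow merely from the cut locus lying on that meridian (the cut locus is only a sub-ray of it), but rather from the observation that an interior crossing would produce, via the reflection fixing $\mu_q\cup\tau_q$, two distinct minimizers from $q$ to $\tau_q(s_i)$ meeting at an interior point, which is impossible. Your handling of the degenerate cases (poles, $\hat\k(r_q)=0$) is fine.
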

\begin{proof}
The segments $[q,\tau_q(s)]$ subconverge to a ray $\s$ that starts
at $q$. Since $\xi_q$ is a ray, it
cannot cross the opposite meridian $\tau_q\vert_{(r_q,\infty)}$.
As $[q,\tau_q(s)]$ and $\xi_q$ are minimizing, 
they only intersect at $q$, 
and hence the angle formed by $\mu_q$ and $[q,\tau_q(s)]$
is $\ge\hat\k(r_q)$. It follows that $\k_{\s(0)}\ge\hat\k(r_{q})$,
which must be an equality as $\hat\k(r_q)$ is a maximum, 
so $\s=\xi_q$.
\end{proof}

\begin{lem} 
\label{lem: max deviat} The function
$r\to\hat\k(r)$ is left continuous and
upper semicontinuous. In particular, 
the set $\{q: \hat\k(r_q)<\a\}$ is open for every $\a$. 
\end{lem}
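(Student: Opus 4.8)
The plan is to use rotational symmetry to reduce to the one-variable function $r\mapsto\hat\k(r)$ on $(0,\infty)$ — legitimate since $\hat\k(r_q)$ depends only on $r_q$ — prove that this function is upper semicontinuous and left continuous, and then read off the last assertion for free: an upper semicontinuous function has open strict sublevel sets $\{r:\hat\k(r)<\a\}$, and since $r\co M_m\setminus\{o\}\to(0,\infty)$ is continuous and open, the set $\{q:\hat\k(r_q)<\a\}$ is open in $M_m$.

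Upper semicontinuity I would get by compactness of rays. If $r_i\to r_0$ with $\hat\k(r_i)\to L$, choose points $q_i\to q_0$ on a fixed meridian with $r_{q_i}=r_i$. The maximizing rays $\xi_{q_i}$ have unit initial velocities lying in a set converging to the unit circle in $T_{q_0}M_m$, so along a subsequence $\dot\xi_{q_i}(0)$ converges to a unit vector $v$, and by continuous dependence on initial conditions $\xi_{q_i}$ converges, uniformly on compacta, to the geodesic $\eta$ with $\dot\eta(0)=v$. A uniform limit of rays is a ray, because $d(\eta(s),\eta(t))=\lim_i d(\xi_{q_i}(s),\xi_{q_i}(t))=t-s$ for all $s<t$; hence $\eta$ is a ray from $q_0$ and $L=\lim_i\k_{\xi_{q_i}(0)}=\k_{\eta(0)}\le\hat\k(r_0)$, giving $\limsup_{r\to r_0}\hat\k(r)\le\hat\k(r_0)$.

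For left continuity it then remains to show $\liminf_{r\to r_0^-}\hat\k(r)\ge\hat\k(r_0)=:\k_0$; the idea is to produce, for points $q_i\to q_0$ along a meridian with $r_{q_i}<r_0$, rays from $q_i$ whose angle with $\mu_{q_i}$ tends to $\k_0$. The cases $\k_0=0$ (nothing to prove) and $\k_0=\pi$ (then $\tau_{q_0}$ is a ray, so each tail $\tau_{q_i}$ is a ray and $\hat\k(r_{q_i})=\pi$) are immediate, so assume $0<\k_0<\pi$. Then the maximizing ray $\xi_{q_0}$ is escaping with finite turn angle $T_{\xi_{q_0}}\le\pi$ (Example~\ref{ex: ray turn angle}), has Clairaut constant $c_0=m(r_0)\sin\k_0$, and is tangent to at most one parallel, at a radius that is a regular value of $m$ (Lemma~\ref{lem: basic choke}, Fact~\ref{fact: on escaping geodesics}). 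Fix $\eta>0$, and for $q$ near $q_0$ on the meridian let $\zeta_q$ be the geodesic from $q$ lying on the same side of $\mu_q$ as $\xi_{q_0}$ lies of $\mu_{q_0}$ and making angle $\k_0-\eta$ with $\mu_q$. Using the turn-angle formula~\eqref{form: general turn angle} together with monotonicity of $F_c$ in $c$, the fact that the tangency radius of $\zeta_q$ stays near a regular value of $m$ (so it moves continuously and the integrand stays integrable there), and integrability of $m^{-2}$ near infinity (valid because $\xi_{q_0}$ has finite turn angle), I would show $T_{\zeta_q}\to T_{\zeta_{q_0}}$ as $r_q\to r_0^-$, while $T_{\zeta_{q_0}}<T_{\xi_{q_0}}\le\pi$ since the turn angle of a geodesic issued from a fixed point is increasing in its initial angle. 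Then $T_{\zeta_{q_i}}<\pi$ for all large $i$, so $\zeta_{q_i}$ is a ray by Lemma~\ref{lem: ray iff at most pi}, whence $\hat\k(r_{q_i})\ge\k_0-\eta$; letting $i\to\infty$ and then $\eta\to0$ completes the proof.

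The main obstacle is exactly this turn-angle comparison. The soft ``limit of maximizing rays'' reasoning delivers only the easy inequality, so the lower bound from the left genuinely requires manufacturing rays at the perturbed basepoints, and the delicate point is keeping their turn angles at most $\pi$. This is also where approaching from $r<r_0$ should matter: pushing the basepoint inward along the meridian keeps the comparison geodesic from slipping past a ``neck'' of $m$ that $\xi_{q_0}$ only barely clears, so its reflection radius varies continuously and the integral~\eqref{form: general turn angle} behaves continuously, whereas pushing outward can destroy this — which is presumably how $\hat\k$ is permitted to jump downward and why only one-sided continuity is claimed. A complete argument will still need to pin down the continuity of the turn angle in the Clairaut constant and the reflection radius near $(r_0,c_0)$, and the monotonicity of the turn angle in the initial angle for the geodesics in play; both are standard consequences of the turn-angle formula but call for a short verification.
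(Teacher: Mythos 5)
Your upper-semicontinuity argument (maximizing rays $\xi_{q_i}$ subconverge to a ray at $q_0$, whose angle with $\mu_{q_0}$ is the limit of $\hat\k(r_{q_i})$ and hence at most $\hat\k(r_{q_0})$) is exactly the easy half of the paper's proof. The problem is the other half. The lemma is stated for an arbitrary rotationally symmetric plane $M_m$ --- it sits among the general lemmas \ref{lem: crit is o} and \ref{lem: approx by segments} and is what makes $\Cr_m$ closed in Theorem \ref{thm-intro: nonneg curv}(i), where $M_m$ is explicitly not assumed von Mangoldt --- but your proof of the lower bound $\liminf_{r\to r_0^-}\hat\k(r)\ge\hat\k(r_0)$ hinges on Lemma \ref{lem: ray iff at most pi} (``$T_\g\le\pi$ implies $\g$ is a ray''), which is a von Mangoldt statement resting on Tanaka's description of the cut locus, and implicitly on Lemma \ref{lem: above rays are rays} as well. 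So even if completed, your argument proves a weaker lemma than the one claimed. Moreover, the analytic core you defer --- joint continuity of the turn angle in the basepoint and the initial angle near a tangency, and strict monotonicity in the initial angle including angles above $\frac{\pi}{2}$ --- is genuinely delicate: the paper needs all of Appendix \ref{sec: calc lem} (Lemma \ref{lem: calc}) plus Lemmas \ref{lem: turn angle of gamma_q is continuous} and \ref{lem: turn angle cont and diff} to handle one-parameter versions of these facts, and only under $G_m\ge 0$ or $G_m^\prime\le 0$. You also need $T_{\zeta_{q_0}}$ strictly less than $\pi$, which does not follow from $T_{\xi_{q_0}}\le\pi$ without a strict monotonicity statement you have not established.

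The paper's left-continuity argument is much softer and avoids turn angles entirely. Suppose $\hat\k(r_{q_i})<\hat\k(r_q)-\e$ with $q_i\to q$ from inside along the meridian. Then $\xi_{q_i}$ crosses $\xi_q$ for some $i$. By Lemma \ref{lem: approx by segments}, $\xi_{q_i}$ and $\xi_q$ are limits of the segments $[q_i,\tau_q(s)]$ and $[q,\tau_q(s)]$, so for $s$ large these two segments meet at an interior point $z\ne\tau_q(s)$; then $\tau_q(s)$ is a cut point of $z$, which is impossible for two segments sharing that endpoint. This uses no curvature hypothesis, no cut-locus structure, and no integral formula, and the same crossing argument gives $\limsup_{r\to r_0^+}\hat\k\le\hat\k(r_0)$. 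If you want to salvage your quantitative approach, you would have to restrict the lemma to von Mangoldt planes and supply the two-parameter continuity/monotonicity of $T$ in full; the segment-crossing route is both shorter and correctly general.
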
 
\begin{proof}
If $\hat\k$ is not left continuous at $r_q$, then  
there exists $\e>0$ and a sequence of points $q_i$ on $\mu_q$
such that $r_{q_i}\to r_q-$ and either
$\hat\k(r_{q_i})-\hat\k(r_q)>\e$ or $\hat\k(r_q)-\hat\k(r_{q_i})>\e$.
In the former case $\xi_{q_i}$ subconverge to a ray that makes larger angle with 
$\mu_q$ that $\xi_q$, contradicting maximality of $\hat\k(r_q)$.
In the latter case, $\xi_{q_i}$ intersects $\xi_q$ for some $i$.
Therefore, by Lemma~\ref{lem: approx by segments}
the segment $[q_i, \tau_q(s)]$ intersects 
$[q, \tau_q(s)]$ for large enough $s$ at 
a point $z\neq\tau_q(s)$, so $\tau_q(s)$ is a cut point of $z$
which cannot happen for a segment. This proves
that $\hat\k$ is left continuous.
A similar argument shows that 
$\displaystyle{\limsup_{r_{q_i}\to r_q+}\hat\k(r_{q_i})}\le \hat\k(r_q)$,
so that $\hat\k$ is upper semicontinuous, which implies that
$\{q: \hat\k(r_q)<\a\}$ is open for every $\a$.
\end{proof}

Lemmas~\ref{lem: ray iff at most pi},
\ref{lem: crit point and ray tangent to parallel}
imply that on a von Mangoldt plane
$\hat\k(r_q)\ge\frac{\pi}{2}$ 
if and only if $T_{\g_q}\le\pi$; the equivalence is sharpened 
in Theorem~\ref{thm: max devia vs turn angle}, whose proof occupies
the rest of this section.

\begin{lem}
\label{lem: rays and parallels}
If $\s$ is escaping and $0<\k_{\s(0)}\le \frac{\pi}{2}$, then 
$T_\s=\int_{r_{q}}^\infty F_{c}(r) dr$; moreover,
if $\k_{\s(0)}=\frac{\pi}{2}$, then $c=m(r_{q})$.
\end{lem}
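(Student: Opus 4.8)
The plan is to treat separately the case $\k_{\s(0)}=\tfrac{\pi}{2}$ and the case $0<\k_{\s(0)}<\tfrac{\pi}{2}$, and in both cases to reduce the turn angle integral to the standard formula (\ref{form: general turn angle}) after arranging that $\s$ is never tangent to a parallel. First I would note that $\s$ escaping means, by Fact~\ref{fact: on escaping geodesics}\,(\ref{fact: escaping geodesics are tangent to parallels only once}), that $\dot r_{\s(s)}$ vanishes at most once; equivalently $\s$ is tangent to a parallel at most once. If $\k_{\s(0)}=\tfrac{\pi}{2}$, then $\dot r_{\s(0)}=0$ by Fact~\ref{fact: on escaping geodesics}\,(2), so $s=0$ is the unique point of tangency, and Clairaut's relation $c=m(r_{\s(s)})\sin\k_{\s(s)}$ evaluated at $s=0$ gives $c=m(r_q)$, which is the second assertion. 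Since this is the only tangency, $\dot r_{\s(s)}>0$ for all $s>0$ (it cannot be negative, as then $\s$ would have to return and be tangent to a parallel again), so $r_{\s(s)}$ increases monotonically from $r_q$ to its supremum; because $\s$ is escaping, that supremum is $\infty$. Applying (\ref{form: general turn angle}) with $\mathrm{sign}(\dot r)=+1$, $r_1=r_q$, $r_2=\infty$ yields $T_\s=\int_{r_q}^\infty F_c(r)\,dr$.

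Next I would handle $0<\k_{\s(0)}<\tfrac{\pi}{2}$. Here $\dot r_{\s(0)}>0$ by Fact~\ref{fact: on escaping geodesics}\,(2) (it is positive rather than negative since the direction of $\s$ makes an acute angle with $\d_r$). By the same escaping argument, $\dot r_{\s(s)}$ cannot vanish for any $s>0$ either — one vanishing would force a turnaround and a second vanishing, contradicting Fact~\ref{fact: on escaping geodesics}\,(\ref{fact: escaping geodesics are tangent to parallels only once}) — so $\dot r_{\s(s)}>0$ throughout and $r_{\s(s)}$ increases monotonically from $r_q$ to $\infty$. Then $\s$ is tangent to neither a meridian nor a parallel on $(0,\infty)$ (tangency to a meridian would mean $c=0$, forcing $\dot\th\equiv 0$ and $\s$ radial, contradicting $\k_{\s(0)}>0$), so (\ref{form: general turn angle}) applies directly with $\mathrm{sign}(\dot r)=+1$, $r_1=r_q$, $r_2=\infty$, giving $T_\s=\int_{r_q}^\infty F_c(r)\,dr$.

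The only subtlety — and the step I would be most careful about — is the behavior of the integrand $F_c(r)=\tfrac{c}{m\sqrt{m^2-c^2}}$ at the lower endpoint $r=r_q$ in the case $\k_{\s(0)}=\tfrac{\pi}{2}$, where $c=m(r_q)$ and the integrand has an apparent singularity. This is handled by the discussion following (\ref{form: general turn angle}) in the text: the integral converges at $r_i=m^{-1}(c)$ precisely when $m'(r_i)\neq 0$, and here $m'(r_q)\neq 0$ because $\s$ is escaping (Lemma~\ref{lem: basic choke}, noting $\s$ is tangent to the parallel through $q$, or directly: if $m'(r_q)=0$ the parallel through $q$ would be a geodesic by Fact~\ref{fact: on escaping geodesics}\,(1), forcing $\s$ to be that bounded parallel). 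Convergence at $r=\infty$ is likewise guaranteed since $\s$ escaping forces $\int_1^\infty m^{-2}$ finite together with $\liminf_{r\to\infty} m(r)>c$ (as $m(r)>m(r_q)=c$ for $r>r_q$ when $c=m(r_q)$, and $c<m(r_q)\le\liminf m$ otherwise), so the stated convergence criterion applies. Thus $T_\s=\int_{r_q}^\infty F_c(r)\,dr$ as a finite number, completing the proof.
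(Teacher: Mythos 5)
Your core argument is correct and matches the paper's: show that $\s\vert_{(0,\infty)}$ is tangent to neither a meridian nor a parallel, then apply (\ref{form: general turn angle}) on $(0,\infty)$ with $\mathrm{sign}(\dot r)=+1$, reading off $c=m(r_q)$ from Clairaut's relation at the tangency when $\k_{\s(0)}=\frac{\pi}{2}$. The only stylistic difference is in the case $0<\k_{\s(0)}<\frac{\pi}{2}$: the paper rules out a tangency at some $u$ with $r_u>r_q$ via Lemma~\ref{lem: no crossing}, whereas you argue directly that a turnaround would force a second vanishing of $\dot r$ for an escaping geodesic; both routes are fine.

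One caution about your final paragraph: the assertion that ``$\s$ escaping forces $\int_1^\infty m^{-2}$ finite'' is false, and so is the conclusion that $T_\s$ is ``a finite number.'' Example~\ref{ex: ray exists implies properties of m} derives integrability of $m^{-2}$ from \emph{finiteness of the turn angle}, not from the geodesic being escaping; an escaping geodesic can perfectly well have $T_\s=\infty$ (e.g.\ when $m$ is bounded, Clairaut forces $\dot\th=c/m^2$ to be bounded below along an escaping $\s$ with $c>0$). Likewise $\liminf_{r\to\infty}m>c$ need not hold in general. Fortunately none of this is needed: the lemma asserts only the identity $T_\s=\int_{r_q}^\infty F_c$, which is valid as an equality in $[0,\infty]$, so you should simply delete the finiteness claims rather than try to repair them.
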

\begin{proof} 
This formula for $T_\s$ is immediate from 
(\ref{form: general turn angle}) once it is shown that
$\s\vert_{(0,\infty)}$ is not tangent to a meridian 
or a parallel. If $\s\vert_{(0,\infty)}$ were tangent to a meridian,
$\k_{\s(0)}$ would be $0$ or $\pi$, which is not the case.
Since $\s$ is escaping,  
Fact~\ref{fact: on escaping geodesics} implies that
$\s$ is tangent to parallels at most once.
If $\k_{\s(0)}=\frac{\pi}{2}$, then $\s$ is tangent to
the parallel through $\s(0)$, and so $\s\vert_{(0,\infty)}$
is not tangent to a parallel.
Finally, if $\k_{\s(0)}< \frac{\pi}{2}$, then
$\s$ is not tangent to a parallel,
else it would be tangent to a parallel through $u$ with 
$r_u>r_q$, which would imply $r_{\s(s)}\le r_u$ for all $s$ 
by Lemma~\ref{lem: no crossing}, which cannot happen
for an escaping geodesic.
\end{proof}

To better understand
the relationship between $\hat\k(r_q)$ and $T_{\g_q}$,
we study how $T_\s$ depends on $\s$, or equivalently
on $\s(0)$ and $\k_{\s(0)}$, when $\s$ varies
in a neighborhood of a ray $\g_q$.

\begin{lem} 
\label{lem: turn angle of gamma_q is continuous}
If $G_m\ge 0$ or $G_m^\prime\le 0$,
then the function $u\to T_{\g_u}$ is continuous at each
point $u$ where $T_{\g_u}$ is finite.
\end{lem}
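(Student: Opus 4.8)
The plan is to rewrite $T_{\g_u}$ as an improper integral in the single variable $r_u$ and then prove continuity of that integral by a split-domain estimate.

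Put $x_0:=r_u$. Since $\g_u$ leaves $u$ in the direction $\d_\th$ it is tangent to the parallel through $u$, so $\k_{\g_u(0)}=\frac{\pi}{2}$ and its Clairaut constant is $c=m(x_0)$. As $T_{\g_u}$ is finite, $\g_u$ is escaping (Lemma~\ref{lem: turn angle escaping}), so Lemma~\ref{lem: rays and parallels} gives $T_{\g_u}=\int_{x_0}^{\infty}F_{m(x_0)}(r)\,dr$; by Example~\ref{ex: ray exists implies properties of m} this forces $\int_1^\infty m^{-2}<\infty$ and $m'(x_0)\neq 0$, hence $m(\infty)=\infty$ by Lemma~\ref{lem: m growth}(4). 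Since then $\liminf_{r\to\infty}m(r)=\infty>m(x_0)$, the lemma (stated just after Lemma~\ref{lem: m growth}) characterizing when a whole neighborhood of an escaping $\g_q$ consists of escaping geodesics shows that $\g_{u'}$ is escaping for all $u'$ near $u$; so on a neighborhood of $u$ we have $T_{\g_{u'}}=I(r_{u'})$, where $I(x):=\int_x^{\infty}F_{m(x)}(r)\,dr=\int_x^\infty\frac{m(x)\,dr}{m(r)\sqrt{m^2(r)-m^2(x)}}$. By rotational symmetry it therefore suffices to show that $I$ is continuous at $x_0$.

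Choose $a>0$ so small that $m'\ge\alpha>0$ on $[x_0-a,x_0+a]\subset(0,\infty)$; this is possible because $m'$ is continuous and $m'(x_0)>0$ — by Lemma~\ref{lem: m growth}(2) when $G_m\ge 0$ and by Lemma~\ref{lem: basic choke} (applied to the escaping $\g_{x_0}$) when $G_m'\le 0$ — and the latter lemma also gives $m(r)>m(x_0)$ for $r>x_0$. Fix $\delta\in(0,a]$, and for $|x-x_0|\le\delta$ split $I(x)=\int_x^{x_0+\delta}F_{m(x)}(r)\,dr+\int_{x_0+\delta}^{\infty}F_{m(x)}(r)\,dr$. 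For the tail: since $m>m(x_0)$ on $(x_0,\infty)$ and $m(\infty)=\infty$, the infimum $c_1:=\inf_{[x_0+\delta,\infty)}m$ is attained and exceeds $m(x_0)$, so for $x$ near $x_0$ the functions $F_{m(x)}$ are bounded on $[x_0+\delta,\infty)$ by one fixed integrable function — using $F_{m(x)}(r)\le\mathrm{const}\cdot m^{-2}(r)$ where $m(r)$ is large, and $m(r)\ge c_1>m(x)$ on the remaining bounded set — while $F_{m(x)}(r)\to F_{m(x_0)}(r)$ pointwise; dominated convergence gives $\int_{x_0+\delta}^\infty F_{m(x)}\to\int_{x_0+\delta}^\infty F_{m(x_0)}$ as $x\to x_0$. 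For the head: on $[x_0-a,x_0+a]$ one has $m^2(r)-m^2(x)=\int_x^r 2m(t)m'(t)\,dt\ge 2\lambda(r-x)$ for $x\le r$ and a constant $\lambda>0$, so $F_{m(x)}(r)\le C(r-x)^{-1/2}$ with $C$ independent of $x$, whence $\int_x^{x_0+\delta}F_{m(x)}(r)\,dr\le 2C\sqrt{2\delta}$ uniformly in $x$, and similarly for the $x_0$-head. Then conclude: given $\e>0$, pick $\delta$ with $4C\sqrt{2\delta}<\e/2$, and then $x$ close enough to $x_0$ that the tails differ by less than $\e/2$, so that $|I(x)-I(x_0)|<\e$.

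I expect the tail step to be the main obstacle, and specifically so in the von Mangoldt case: there $m$ need not be monotone on all of $[x_0,\infty)$, so one cannot invert $m$ to change variables, and the argument instead extracts the positive gap $c_1>m(x_0)$ from $m(\infty)=\infty$ and relies on dominated convergence. (When $m$ is increasing past $x_0$ the substitution $m(r)\sin\phi=m(x)$ turns $T_{\g_x}$ into $\int_0^{\pi/2}\frac{d\phi}{m'(r(\phi,x))}$, making continuity — and differentiability — transparent; this is essentially the content of the calculus lemma in Appendix~\ref{sec: calc lem}, which may be invoked in place of the explicit estimate.)
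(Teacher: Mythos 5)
Your proof is correct and follows essentially the same route as the paper's: both reduce to continuity of $x\mapsto\int_x^\infty F_{m(x)}$ using Lemmas~\ref{lem: turn angle escaping}, \ref{lem: basic choke}, \ref{lem: m growth} and \ref{lem: rays and parallels}, split the integral at a point just past $x_0$, dominate the tail by a constant times $m^{-2}$, and control the $(r-x)^{-1/2}$ singularity in the head (the paper via integration by parts, you via a uniform $O(\sqrt{\delta})$ bound). Your explicit verification that $\g_{u'}$ is escaping for $u'$ near $u$ is a nice touch that the paper leaves implicit.
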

\begin{proof}
If $T_{\g_u}$ is finite, then $\g_u$ is escaping by 
Lemma~\ref{lem: turn angle escaping}, and hence
$T_{\g_u}=\int_{r_{u}}^\infty F_{m(r_u)}$
by Lemma~\ref{lem: rays and parallels}.
We need to show that this integral depends continuously 
on $r_u$. 

By Lemma~\ref{lem: basic choke},
Lemma~\ref{lem: m growth}, and
the discussion preceding Example~\ref{ex: ray turn angle},
the assumptions on $G_m$ and finiteness of $T_{\g_u}$
imply that $m(r)>m(r_u)$ for $r>r_u$, $m^{-2}$ is integrable, 
$m^\prime(r_u)> 0$, and $m(\infty)=\infty$. 
Hence there exists $\delta>r_u$ with
$m^\prime\vert_{[r_u,\delta]}>0$, and 
$m(r)>m(\delta)$ for $r>\delta$; it is clear
that small changes in $u$ do not affect $\delta$. 

Write $\int_{r_{u}}^\infty F_{m(r_u)}=
\int_{r_{u}}^\delta F_{m(r_u)}+\int_{\delta}^\infty F_{m(r_u)}$.
On $[r_u,\delta]$ we can write
$F_{m(r_u)}=h(r,r_u)(r-r_u)^{-\frac{1}{2}}$ for some 
smooth function $h$. Since 
$(r-r_u)^{-\frac{1}{2}}$ is the derivative
of $2(r-r_u)^{\frac{1}{2}}$,
one can integrate $F_{m(r_u)}$ by parts which easily
implies continuous dependence of
$\int_{r_{u}}^\delta F_{m(r_u)}$ on $r_u$.

Continuous dependence of $\int_{\delta}^\infty F_{m(r_u)}$ on
$r_u$ follows because $F_{m(r_u)}$ is continuous in $r_u$,
and is dominated by $Km^{-2}$ where $K$ is a positive constant
independent of small changes of $r_u$. 
\end{proof}

Next we focus on the case when 
$\s(0)$ is fixed, while $\k_{\s(0)}$ varies
near $\frac{\pi}{2}$. To get an explicit formula for $T_\s$ 
we need the following.

\begin{lem}
\label{lem: geod just below ray} 
If $M_m$ is von Mangoldt, and $\g_q$ is a ray, then
there is $\e>0$ such that
every geodesic $\s\co [0,\infty)\to M_m$  
with $\s(0)=q$ and 
$\k_{\s(0)}\in [\frac{\pi}{2}, \frac{\pi}{2}+\e]$ 
is tangent to a parallel exactly once, and if $u$ is the 
point where $\s$ is tangent to a parallel, then
$m^\prime>0$ on $[r_u, r_q]$.
\end{lem}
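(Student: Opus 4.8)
\textbf{Proof proposal for Lemma~\ref{lem: geod just below ray}.}

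The plan is to split the statement into two claims: first, that for $\s(0)=q$ and $\k_{\s(0)}$ slightly above $\frac{\pi}{2}$ the geodesic $\s$ is escaping (hence tangent to a parallel at most once by Fact~\ref{fact: on escaping geodesics}), and is in fact tangent to a parallel exactly once (with the tangency point $u$ satisfying $r_u<r_q$); and second, that $m^\prime>0$ on the whole interval $[r_u,r_q]$ provided $\e$ is small enough. Throughout I would use that $\g_q$ is a ray, so by Lemma~\ref{lem: crit point and ray tangent to parallel} we have $q\in\Cr_m$; hence by Proposition~\ref{prop-intro: choking} (equivalently Lemma~\ref{lem: basic choke} applied to the escaping geodesic $\g_q$, which is escaping by Lemma~\ref{lem: turn angle escaping} since $T_{\g_q}\le\pi<\infty$ by Example~\ref{ex: ray turn angle}) we get $m^\prime(r_q)>0$ and $m(r)>m(r_q)$ for $r>r_q$. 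Continuity of $m^\prime$ then gives a fixed $\de\in(0,r_q)$ with $m^\prime>0$ on $[r_q-\de,\ r_q]$; this $\de$ is the candidate for controlling how far down $u$ is allowed to slide, and it does not move under perturbations of $\k_{\s(0)}$.

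Next I would analyze the tangency point. Since $\s(0)=q$ and $\k_{\s(0)}>\frac{\pi}{2}$, we have $\dot r_{\s(0)}<0$, so $\s$ initially decreases in $r$; let $u$ be the first point where $\dot r$ vanishes, i.e. the first tangency to a parallel. By Clairaut's relation $c=m(r_q)\sin\k_{\s(0)}$, and tangency at $u$ means $c=m(r_u)$, so $m(r_u)=m(r_q)\sin\k_{\s(0)}<m(r_q)$; as $\k_{\s(0)}\to\frac{\pi}{2}+$ we get $m(r_u)\to m(r_q)-$. Because $m^\prime(r_q)>0$, the function $m$ is strictly increasing just below $r_q$, so $r_u\to r_q-$; in particular for $\e$ small enough $r_u\in[r_q-\de,\ r_q)$, and then $m^\prime>0$ on $[r_u,r_q]$ as required. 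It remains to rule out that $\s$ is tangent to a parallel more than once or fails to be escaping: here I would invoke the von Mangoldt hypothesis via Lemma~\ref{lem: above rays are rays} — since $\k_{\s(0)}<\k_{\mu_q(0)}$ is false but we can compare downward from a ray — actually the cleaner route is Lemma~\ref{lem: no crossing}: once $\s$ is tangent to the parallel $P_u$ through $u$, if $\s$ were escaping then $\s\setminus\{u\}$ stays in the unbounded component of $M_m\setminus P_u$, so there is no second tangency. So I reduce to showing $\s$ is escaping. For this, note $T_\s\le T_{\g_q}\le\pi$ by a symmetry/monotonicity comparison: reflecting $\s$ across $\mu_u\cup\tau_u$ shows the "ascending" half of $\s$ is a translate of $\g_u$, and $\g_u$ is a ray because $q\in\Cr_m$ is above $u$ on the meridian and $u\in\Cr_m$ would follow — more directly, one shows $\s$ starting below $q$ with downward-then-upward $r$ profile cannot meet the cut locus (which lies on $\tau_{\s(0)}$ by \cite{Tan-cut}) before it would have to cross back, using that the turn angle of the descending part plus the ascending part is at most $\pi$ by Lemma~\ref{lem: ray iff at most pi} applied after reflection. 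Hence $\s$ has finite turn angle, so by Lemma~\ref{lem: turn angle escaping} it is escaping.

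I expect the main obstacle to be the clean justification that $\s$ is escaping (equivalently that it is tangent to a parallel exactly once rather than being a closed/bounded geodesic spiraling between two parallels): the von Mangoldt hypothesis enters precisely here, through the structure of the cut locus from \cite{Tan-cut, Ele} and Lemma~\ref{lem: ray iff at most pi}. Once escaping is established, the uniqueness of the tangency is immediate from Fact~\ref{fact: on escaping geodesics}(\ref{fact: escaping geodesics are tangent to parallels only once}) or Lemma~\ref{lem: no crossing}, and the inequality $m^\prime>0$ on $[r_u,r_q]$ is the soft continuity argument sketched above. A secondary technical point is making sure the same $\e$ works uniformly for the two conclusions; but since the $\de$ governing positivity of $m^\prime$ is fixed once and for all by $q$, and the map $\k_{\s(0)}\mapsto r_u$ is continuous and sends $\frac{\pi}{2}$ to $r_q$, shrinking $\e$ so that $r_u>r_q-\de$ handles both at once.
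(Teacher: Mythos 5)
Your setup (the first tangency point $u$ exists, satisfies $r_u\to r_q-$ as $\k_{\s(0)}\to\frac{\pi}{2}+$ via Clairaut and $m^\prime(r_q)>0$, and hence $m^\prime>0$ on $[r_u,r_q]$ for small $\e$) is sound and matches the paper's endgame. The genuine gap is exactly where you predicted it: the proof that $\s$ is escaping. None of the three arguments you offer for it works. (a) ``$\g_u$ is a ray because $u$ lies below $q$ on the meridian'' is unjustified: $\Cr_m$ need not be downward-closed along meridians (that is precisely what the disconnectedness examples in Theorem~\ref{thm-intro: neg curv} and Example~\ref{ex: m' positive but non-connected} exhibit), and deriving it from openness of $A_m$ or continuity of the turn angle would be circular, since Theorem~\ref{thm-intro: away} and Theorem~\ref{thm: max devia vs turn angle} are downstream of this very lemma. (b) The claim $T_\s\le T_{\g_q}$ is false: by (\ref{form: turn angle via c}) one has $T_\s=\int_{r_q}^\infty F_c+2\int_{r_u}^{r_q}F_c$, and Lemma~\ref{lem: turn angle cont and diff} shows $T_\s$ is \emph{strictly increasing} in $\k_{\s(0)}$ at $\frac{\pi}{2}$; in particular when $T_{\g_q}=\pi$ the geodesic $\s$ has $T_\s>\pi$ and is not a ray, so no bound of the form $T_\s\le\pi$ can be the mechanism here. (c) The cut-locus sketch rests on (b).

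The paper closes this gap differently: since $\s\to\g_q$ uniformly on compact sets as $\e\to 0$, and $\g_q$ eventually lies strictly outside the parallel $P_q$, for small $\e$ the geodesic $\s$ must re-cross $P_q$ at some $\s(s)$ with $\k_{\s(s)}<\frac{\pi}{2}$ (the crossing is transversal because Clairaut gives $\sin\k_{\s(s)}=\sin\k_{\s(0)}<1$ there); then rotational symmetry of the ray $\g_q$ together with Lemma~\ref{lem: above rays are rays} shows $\s\vert_{[s,\infty)}$ is a ray, hence $\s$ is escaping, and uniqueness of the tangency follows from Fact~\ref{fact: on escaping geodesics}. Note also that your own setup admits an elementary repair: taking $r_u$ to be the largest root of $m=c$ below $r_q$, you get $m>c$ on $(r_u,r_q]$ by maximality and $m(r)>m(r_q)>c$ for $r>r_q$ by Lemma~\ref{lem: basic choke}, so Clairaut forbids any second tangency; since $\ddot r=m\,m^\prime\,\dot\th^2>0$ at $u$, the coordinate $r$ increases after $u$ and, having no further critical points and $\dot r$ bounded away from zero once $r$ leaves a neighborhood of $r_u$, tends to infinity. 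Either route must be spelled out; as written, the escaping step does not stand.
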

\begin{proof} 
If $\k_{\s(0)}=\frac{\pi}{2}$, then $\s=\g_q$, so
it is tangent to a parallel only at $q$,
as rays are escaping. 
If $\k_{\s(0)}>\frac{\pi}{2}$, then 
$\s$ converges to $\g_q$ on compact subsets as $\e\to 0$,  
so for a sufficiently small $\e$ the geodesic $\s$ 
crosses the parallel through $q$ at some point $\s(s)$
such that $\k_{\s(s)}<\frac{\pi}{2}$. Since $\g_q$ is a ray,
rotational symmetry and 
Lemma~\ref{lem: above rays are rays} imply that
$\s\vert_{[s,\infty)}$ is a ray, so $\s$ is escaping.
Thus $\s$ is tangent to a parallel at a point $u$
where $r_{\s(s)}$ attains a minimum, and is not 
tangent to a parallel at any other point by 
Fact~\ref{fact: on escaping geodesics}.
Finally, $r_u=\lim_{\e\to 0}r_q$, and since $m^\prime(r_q)>0$
by Proposition~\ref{prop-intro: choking},
we get $m^\prime>0$ on $[r_u, r_q]$ for small $\e$. 
\end{proof}

Under the assumptions of Lemma~\ref{lem: geod just below ray}
the Clairaut constant $c$ of $\s$ equals 
$m(r_u)=m(r_q)\sin \k_{\s(0)}$, 
and the turn angle of $\s$ is given by
\begin{equation}
\label{form: upward turn angle via c}
T_\s=\int_{r_q}^\infty F_{m(r_q)}(r) dr
\quad\text{if}\quad \k_{\s(0)}=\frac{\pi}{2}\quad\text{and}
\end{equation}
\begin{equation}
\label{form: turn angle via c}
T_\s=\int_{r_u}^\infty F_{c}(r) dr -
\int_{r_q}^{r_u} F_{c}(r) dr =
\int_{r_q}^\infty F_{c}(r) dr +
2\int_{r_u}^{r_q} F_{c}(r) dr 
\end{equation}
if $\frac{\pi}{2}<\k_{\s(0)}<\frac{\pi}{2}+\e$. 
These integrals converge, i.e. $T_\s$ is finite, as follows from 
Example~\ref{ex: ray exists implies properties of m},
and Lemmas~\ref{lem: m growth}, \ref{lem: geod just below ray}.

Since any geodesic $\s$ with $\s(0)=q$ and 
$\k_{\s(0)}\in [0, \frac{\pi}{2}+\e]$ 
has finite turn angle,
one can think of $T_\s$ as a function of $\k_{\s(0)}$
where $\s$ varies over geodesics with $\s(0)=q$
and $\k_{\s(0)}\in [\,0, \frac{\pi}{2}+\e]$.

\begin{lem} 
\label{lem: turn angle cont and diff}
If $M_m$ is von Mangoldt, and $\g_q$ is a ray, then
there is $\delta>\frac{\pi}{2}$ such that the function
$\k_{\s(0)}\to T_\s$ is continuous 
and strictly increasing on $[\frac{\pi}{2}, \delta]$, and
continuously differentiable on $(\frac{\pi}{2}, \delta]$;
moreover, the derivative of $T_\s$ is infinite at $\frac{\pi}{2}$.
\end{lem}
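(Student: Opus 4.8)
The plan is to express $T_\s$ as an explicit function of the Clairaut constant $c = m(r_q)\sin\k_{\s(0)}$ via \eqref{form: turn angle via c}, and then to analyze that function using the substitution that turns the integrals into something to which the calculus lemma in Appendix~\ref{sec: calc lem} applies. First I would fix $\e$ as in Lemma~\ref{lem: geod just below ray}, so that for $\k_{\s(0)}\in[\tfrac{\pi}{2},\tfrac{\pi}{2}+\e]$ the geodesic $\s$ is tangent to a parallel exactly once, at a point $u=u(c)$ with $m(r_u)=c$ and $m'>0$ on $[r_u,r_q]$; since $m'>0$ there, the map $c\mapsto r_u$ is a smooth decreasing bijection onto a neighborhood $[r_0,r_q]$ of $r_q$, and $c\mapsto\k_{\s(0)}$ is a smooth increasing bijection from $[r_0,r_q]$ (in $r_u$) onto $[\tfrac{\pi}{2},\tfrac{\pi}{2}+\e]$. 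So it suffices to prove the continuity, monotonicity, differentiability, and the infinite-derivative statements for $T_\s$ as a function of $r_u$ (equivalently of $c$), and then transfer them along these smooth monotone reparametrizations, using the chain rule and the fact that $\frac{d\k_{\s(0)}}{dr_u}\neq 0$.

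Next I would handle the two pieces of \eqref{form: turn angle via c} separately: $\int_{r_q}^\infty F_c(r)\,dr$ and $2\int_{r_u}^{r_q} F_c(r)\,dr$. For the first piece, $c$ varies but the lower limit $r_q$ is fixed and bounded away from $m^{-1}(c)=r_u < r_q$; here $F_c(r)=\dfrac{c}{m\sqrt{m^2-c^2}}$ is smooth in $(r,c)$ on the relevant region, dominated by a constant multiple of $m^{-2}$ (using $m(\infty)=\infty$ from Lemma~\ref{lem: m growth}), so differentiation under the integral sign gives a continuous, and in fact $C^1$, dependence on $c$ on the whole interval $[\tfrac{\pi}{2},\tfrac{\pi}{2}+\e]$, including at the endpoint $c=m(r_q)$; in particular this piece contributes a finite derivative. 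The delicate piece is $2\int_{r_u}^{r_q}F_c(r)\,dr$, because as $c\to m(r_q)^-$ the interval $[r_u,r_q]$ shrinks to a point while the integrand blows up at the lower endpoint like $(r-r_u)^{-1/2}$ (the square-root singularity coming from $m^2(r)-m^2(r_u)\sim 2m(r_u)m'(r_u)(r-r_u)$, valid since $m'(r_u)>0$). This is exactly the situation covered by the calculus lemma of Appendix~\ref{sec: calc lem}: writing $m^2(r)-c^2 = (r-r_u)\,\phi(r,r_u)$ with $\phi$ smooth and positive on $[r_u,r_q]\times[r_0,r_q]$, one substitutes $r = r_u + t^2$ (or $t=\sqrt{r-r_u}$) to convert $\int_{r_u}^{r_q}F_c(r)\,dr$ into $\int_0^{\sqrt{r_q-r_u}} g(t,r_u)\,dt$ with $g$ smooth and $g(0,r_u)=\dfrac{c}{m(r_u)\sqrt{\phi(r_u,r_u)}}=\dfrac{1}{\sqrt{2\,m'(r_u)}}>0$.

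From that representation the remaining assertions are routine: $\int_0^{\sqrt{r_q-r_u}}g(t,r_u)\,dt$ is continuous in $r_u$ (hence $T_\s$ is continuous on $[\tfrac{\pi}{2},\delta]$), it is $C^1$ for $r_u<r_q$ by differentiating under the integral and using the Leibniz rule for the moving endpoint, and its derivative with respect to $r_u$ contains the term $-\dfrac{1}{2\sqrt{r_q-r_u}}\,g\bigl(\sqrt{r_q-r_u},r_u\bigr)$, which tends to $-\infty$ as $r_u\to r_q^-$ because $g(0,r_q)=\dfrac{1}{\sqrt{2\,m'(r_q)}}>0$; translating through the (smooth, nondegenerate) change of variables $r_u\mapsto\k_{\s(0)}$ gives $\dfrac{dT_\s}{d\k_{\s(0)}}\to+\infty$ at $\tfrac{\pi}{2}$, with the sign arranged so that $T_\s$ is increasing. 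Strict monotonicity on all of $[\tfrac{\pi}{2},\delta]$ then follows from $\dfrac{dT_\s}{d\k_{\s(0)}}>0$ on $(\tfrac{\pi}{2},\delta]$ (shrinking $\delta$ if necessary so the positive contribution of the shrinking-interval piece dominates the bounded contribution of the first integral), together with continuity at the endpoint $\tfrac{\pi}{2}$; geometrically this is also forced, since pushing $\k_{\s(0)}$ up past $\tfrac{\pi}{2}$ makes $\s$ dip below the parallel through $q$ and thus sweep a strictly larger angle. The main obstacle is the careful bookkeeping in the square-root substitution near $c=m(r_q)$ — verifying the smoothness of $g$ and the nonvanishing of $g(0,\cdot)$, and correctly propagating the factor $\tfrac{dr_u}{d\k_{\s(0)}}$ through the chain rule to pin down both the sign and the blow-up of the derivative.
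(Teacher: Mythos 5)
Your overall architecture matches the paper's: reduce everything to the dependence of the two integrals (\ref{form: upward turn angle via c})--(\ref{form: turn angle via c}) on the Clairaut constant $c=m(r_q)\sin\k_{\s(0)}$, and resolve the square-root singularity at the tangency point by an explicit substitution (the paper's Lemma~\ref{lem: calc} uses $t=m/c$ where you use $t=\sqrt{r-r_u}$; either works for that piece). The genuine gap is in your treatment of the outer integral $\int_{r_q}^\infty F_c$. You claim its derivative in $c$ is finite up to $c=m(r_q)$ because ``$r_q$ is bounded away from $m^{-1}(c)$''; it is not, since $m^{-1}(c)=r_u\to r_q$ as $c\to m(r_q)$. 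Differentiating under the integral gives $\int_{r_q}^\infty m\,(m^2-c^2)^{-3/2}\,dr$, whose integrand develops a non-integrable $(r-r_q)^{-3/2}$ singularity in the limit; Steps 3 and 5 of Lemma~\ref{lem: calc} show this derivative diverges like $+\bigl(m^\prime(r_q)\sqrt{c_q^2-c^2}\bigr)^{-1}$ with $c_q=m(r_q)$ --- the same rate as, and opposite sign to, the divergence $-2\bigl(m^\prime(r_q)\sqrt{c_q^2-c^2}\bigr)^{-1}$ contributed by the tangency piece. Since strict monotonicity is decided by the sign of the sum of these two competing divergences, your step ``the shrinking-interval piece dominates the bounded contribution of the first integral'' does not establish it: the first integral's contribution is not bounded, and one must extract its leading coefficient (the paper does this by integration by parts) and check that $1-2<0$.

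A second problem is the final transfer to the variable $\k_{\s(0)}$. The reparametrization is degenerate at $\frac{\pi}{2}$: $dc/d\k_{\s(0)}=m(r_q)\cos\k_{\s(0)}$ vanishes there, equivalently $dr_u/d\k_{\s(0)}\to 0$, so your appeal to a ``smooth, nondegenerate'' change of variables is not valid at the one point where it matters. Indeed $\sqrt{c_q^2-c^2}=m(r_q)\,|\cos\k_{\s(0)}|$, so the $(c_q^2-c^2)^{-1/2}$ blow-up of $T^\prime(c)$ is exactly cancelled by the vanishing Jacobian, and the product has a finite limit; the same cancellation occurs in your $r_u$-parametrization, where $\sqrt{r_q-r_u}$ is comparable to $|\k_{\s(0)}-\frac{\pi}{2}|$. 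The divergence the paper actually proves (and uses) is $T^\prime(c)\to-\infty$ in the Clairaut parameter, via Lemma~\ref{lem: calc}; what your argument must deliver for the application in Theorem~\ref{thm: max devia vs turn angle} is the sign $T^\prime(c)<0$ near $c=m(r_q)$, and that is precisely the point your bounded-derivative claim leaves open.
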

\begin{proof} The Clairaut constant $c$ of $\s$ equals 
$m(r_u)=m(r_q)\sin \k_{\s(0)}$, so
the assertion is immediate from (elementary but nontrivial)
Lemma~\ref{lem: calc} about continuity and differentiability
of the integrals 
(\ref{form: upward turn angle via c})-(\ref{form: turn angle via c}).
\end{proof}

\begin{thm}\label{thm: max devia vs turn angle} 
If $M_m$ is von Mangoldt and $q\neq o$, then 
\begin{enumerate}\vspace{-5pt} 
\item[\textup{(1)}]  
$\hat\k(r_q)>\frac{\pi}{2}$ if and only if $T_{\g_q}<\pi$.
\vspace{2pt}
\item[\textup{(2)}]   
$\hat\k(r_q)=\frac{\pi}{2}$ if and only if $T_{\g_q}=\pi$.
\vspace{-5pt}
\end{enumerate}
\end{thm}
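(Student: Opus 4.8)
\textbf{Proof proposal for Theorem~\ref{thm: max devia vs turn angle}.}

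The plan is to combine the qualitative equivalences already established (Lemmas~\ref{lem: ray iff at most pi} and \ref{lem: crit point and ray tangent to parallel}) with the monotonicity and continuity of the turn angle near $\frac\pi2$ (Lemma~\ref{lem: turn angle cont and diff}) to upgrade the nonstrict inequality to a strict one. First observe that (1) and (2) together are equivalent to the single assertion: $\hat\k(r_q)\ge\frac\pi2$ iff $T_{\g_q}\le\pi$, together with the statement that equality on one side forces equality on the other. The first half is exactly the content of the remark preceding the theorem (Lemmas~\ref{lem: ray iff at most pi} and \ref{lem: crit point and ray tangent to parallel}), so the real work is to rule out the two ``mismatched boundary'' cases: (a) $\hat\k(r_q)>\frac\pi2$ but $T_{\g_q}=\pi$, and (b) $\hat\k(r_q)=\frac\pi2$ but $T_{\g_q}<\pi$.

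For case (a): suppose $\hat\k(r_q)>\frac\pi2$. Then $q\in\Cr_m$, so by Lemma~\ref{lem: crit point and ray tangent to parallel} $\g_q$ is a ray, hence $T_{\g_q}\le\pi$ is finite; moreover $\xi_q$ is a ray with $\k_{\xi_q(0)}=\hat\k(r_q)>\frac\pi2$. Apply Lemma~\ref{lem: turn angle cont and diff} at $q$ (legitimate since $\g_q$ is a ray): there is $\delta>\frac\pi2$ so that $\k_{\s(0)}\mapsto T_\s$ is strictly increasing on $[\frac\pi2,\delta]$. Pick any angle $\alpha$ with $\frac\pi2<\alpha\le\min(\delta,\hat\k(r_q))$ and let $\s$ be the geodesic from $q$ with $\k_{\s(0)}=\alpha$; by Lemma~\ref{lem: above rays are rays} (applied to the ray $\xi_q$, noting $\k_{\xi_q(0)}=\hat\k(r_q)\ge\alpha$) $\s$ is a ray, so $T_\s\le\pi$. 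But strict monotonicity gives $T_{\g_q}=T_\s\big|_{\k=\pi/2}<T_\s\le\pi$, so $T_{\g_q}<\pi$, contradicting the assumption $T_{\g_q}=\pi$. Hence (a) is impossible. (Note: if $\hat\k(r_q)=\pi$, i.e. $\tau_q$ is a ray, then $q$ is a pole and every direction gives a ray, so the same argument with any $\alpha\in(\frac\pi2,\delta]$ applies verbatim.)

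For case (b): suppose $T_{\g_q}<\pi$. Then $\g_q$ is a ray (Lemma~\ref{lem: ray iff at most pi}), so $\hat\k(r_q)\ge\frac\pi2$; I must show the inequality is strict. Again invoke Lemma~\ref{lem: turn angle cont and diff}: $\k_{\s(0)}\mapsto T_\s$ is continuous on $[\frac\pi2,\delta]$ with $T_\s\big|_{\pi/2}=T_{\g_q}<\pi$, so by continuity there is $\alpha>\frac\pi2$ (with $\alpha\le\delta$) such that the geodesic $\s$ with $\k_{\s(0)}=\alpha$ still has $T_\s<\pi$, hence $T_\s\le\pi$ and $\s$ is a ray by Lemma~\ref{lem: ray iff at most pi}. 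Then $\hat\k(r_q)\ge\k_{\s(0)}=\alpha>\frac\pi2$, as desired. This disposes of (b). Combining: $\hat\k(r_q)>\frac\pi2\Leftrightarrow T_{\g_q}<\pi$, which is (1); and then (2) follows formally since in all cases $\hat\k(r_q)\ge\frac\pi2\Leftrightarrow T_{\g_q}\le\pi$ (the qualitative equivalence), so the two boundary values match up.

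The main obstacle is making sure Lemma~\ref{lem: turn angle cont and diff} is applicable and that its strict monotonicity is genuinely available in case (a) — this is why the one-sided differentiability statement (with infinite derivative at $\frac\pi2$) was proved, and the strictness on all of $[\frac\pi2,\delta]$ is exactly what case (a) consumes. A secondary point to handle carefully is the bookkeeping when $\hat\k(r_q)=\pi$ (the pole case), where $\tau_q$ rather than some transversal geodesic realizes the maximum, but since a pole admits rays in every direction the interior argument goes through unchanged. Everything else is a direct chain of the cited lemmas.
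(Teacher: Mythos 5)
Your proposal is correct and follows essentially the same route as the paper: both directions of (1) come from Lemma~\ref{lem: turn angle cont and diff} (strict monotonicity of the turn angle at $\frac{\pi}{2}$ for one implication, continuity for the other), with (2) then following formally from the qualitative equivalence $\hat\k(r_q)\ge\frac{\pi}{2}\Leftrightarrow T_{\g_q}\le\pi$. Your reformulation as ruling out the two mismatched boundary cases, and your explicit handling of the pole case $\hat\k(r_q)=\pi$, are only cosmetic differences from the paper's argument.
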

\begin{proof} (1)
If $\hat\k(r_q)>\frac{\pi}{2}$, 
then any geodesic $\s$ with $\s(0)=q$ and 
$\k_{\s(0)}\in [\frac{\pi}{2}, \hat\k(r_q)]$
is a ray, and so has turn angle $\le \pi$.
By Lemma~\ref{lem: turn angle cont and diff} 
the turn angle is increasing at $\frac{\pi}{2}$, so $T_{\g_q}<\pi$.
Conversely, if $T_{\g_q}<\pi$, then by
Lemma~\ref{lem: turn angle cont and diff} 
the turn angle is continuous at $\frac{\pi}{2}$, so
any geodesic $\s$ with
$\s(0)=q$ and $\k_{\s(0)}$ near $\frac{\pi}{2}$ has
turn angle $<\pi$, and is therefore a ray, so 
$\hat\k(r_q)>\frac{\pi}{2}$.

(2) follows from (1) and the fact 
that $\hat\k(r_q)\ge\frac{\pi}{2}$ if and only if $T_{\g_q}\le\pi$.
\end{proof}

\begin{proof}[Proof of Theorem~\ref{thm-intro: away}]
By Theorem~\ref{thm: max devia vs turn angle} we know that
$q\in A_m$ if and only if $T_{\g_q}<\pi$, 
and by Lemma~\ref{lem: turn angle of gamma_q is continuous} the map
$u\to T_{\g_u}$ is continuous at $q$, so the set 
$\{u\in M_m\,|\, T_{\g_u}<\pi\}$
is open, and hence so is $A_m$.
\end{proof}

\begin{proof}[Another proof of Theorem~\ref{thm-intro: away}]
Fix $q\in A_m$ so that $T_{\g_q}<\pi$ by 
Theorem~\ref{thm: max devia vs turn angle}.
Fix $\e>0$ such that $\e+T_{\g_q}<\pi$. 
Let $P_q$ be the parallel through $q$.
Then there is a ray $\g$ with $\g(0)=q$ and $\k_{\g(0)}>\frac{\pi}{2}$
such that $\g$ intersects $P_q$ at points
$q$, $\g(t)$, and the turn angle of $\g\vert_{(0,t)}$ 
is $<\e$.

For an arbitrary sequence $q_i\to q$ we need to show that
$q_i\in A_m$ for all large $i$.
Let $\g_i\co [0,\infty)\to M_m$ be the geodesic with 
$\g_i(0)=q_i$ and $\k_{\g_i(0)}=\k_{\g(0)}$.
Since $\g_i$ converge to $\g$ on compact sets, for large $i$ 
there are $t_i>0$ such that $\g_i(t_i)\in P_q$ and $t_i\to t$.
The angle formed by $\g$ and $\mu_{\g(t)}$
is $<\frac{\pi}{2}$. Rotational symmetry and 
Lemma~\ref{lem: above rays are rays} imply that 
if $i$ is large, then $\g_i\vert_{[t_i,\infty)}$
is a ray whose turn angle is $\le T_{\g_q}$. 
The turn angles of $\g_i\vert_{(0,t_i)}$ converge to the turn angle
of $\g\vert_{(0,t)}$, which is $<\e$. Thus $T_{\g_i}<T_{\g_q}+\e<\pi$
for large $i$, so that $\g_i$ is a ray by 
Lemma~\ref{lem: ray iff at most pi}, and hence $q_i\in A_m$.
\end{proof}

\section{Planes of nonnegative curvature}
\label{sec: nneg}

A key consequence of $G_m\ge 0$ is 
monotonicity of the turn angle and of $\hat\k$. 

\begin{prop}\label{prop: G>0 and monotonicity}
Suppose that $M_m$ has $G_m\ge 0$.
If $0<r_u<r_v$ and $\g_u$ has finite turn angle, 
then $T_{\g_u}\le T_{\g_v}$ with equality if and only if $G_m$ vanishes
on $[r_u, \infty]$.
\end{prop}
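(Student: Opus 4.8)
The plan is to compare the turn angles via the explicit integral formula $T_{\g_u}=\int_{r_u}^\infty F_{m(r_u)}(r)\,dr$, which holds by Lemma~\ref{lem: rays and parallels} once we know $\g_u$ is escaping (guaranteed by finiteness of $T_{\g_u}$ together with Lemma~\ref{lem: turn angle escaping}). The hypothesis $G_m\ge 0$ supplies, via Lemma~\ref{lem: m growth}, the facts that $m^\prime>0$ everywhere, that $m^{-2}$ is integrable on $[1,\infty)$, and that $m(\infty)=\infty$; in particular $m$ is strictly increasing, so $m(r)>m(r_u)$ for $r>r_u$ and similarly $m(r)>m(r_v)$ for $r>r_v$, and $T_{\g_v}$ is also finite. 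So the whole statement reduces to analyzing the function $c\mapsto \int_{m^{-1}(c)}^\infty F_c(r)\,dr$ as $c$ ranges over $(0,m(\infty))$, with $c=m(r_u)$ and $c=m(r_v)$ the two values in play.

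First I would change variables. Since $m$ is a strictly increasing diffeomorphism of $[0,\infty)$ onto $[0,\infty)$ with $m^\prime>0$, substitute $t=m(r)$, so that $dr=dt/m^\prime(m^{-1}(t))$ and
\[
T_{\g_u}=\int_{m(r_u)}^\infty \frac{c}{t\,\sqrt{t^2-c^2}}\cdot\frac{dt}{m^\prime(m^{-1}(t))}\Bigg|_{c=m(r_u)}.
\]
The point of this is that the lower limit becomes simply $c$ itself, and the only remaining dependence on $c$ that is coupled to the integrand weight is through the substitution-free factor; writing $w(t):=1/m^\prime(m^{-1}(t))$, we have $T_{\g_u}=\int_c^\infty \frac{c\,w(t)}{t\sqrt{t^2-c^2}}\,dt$ with $c=m(r_u)$. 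Now rescale once more by $t=c\,\tau$: then
\[
\int_c^\infty \frac{c\,w(t)}{t\sqrt{t^2-c^2}}\,dt=\int_1^\infty \frac{w(c\tau)}{\tau\sqrt{\tau^2-1}}\,d\tau.
\]
This is the decisive reformulation: the kernel $\frac{1}{\tau\sqrt{\tau^2-1}}$ is positive and $c$-independent, and the $c$-dependence is entirely inside $w(c\tau)=1/m^\prime\big(m^{-1}(c\tau)\big)$. Since $G_m\ge 0$ means $m^\prime$ is non-increasing, the composite $c\mapsto m^\prime(m^{-1}(c\tau))$ is non-increasing in $c$ for each fixed $\tau\ge 1$, so $c\mapsto w(c\tau)$ is non-decreasing. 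Hence $c\mapsto \int_1^\infty \frac{w(c\tau)}{\tau\sqrt{\tau^2-1}}\,d\tau$ is non-decreasing, which gives $T_{\g_u}\le T_{\g_v}$ because $m(r_u)<m(r_v)$.

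For the equality clause: equality forces $w(m(r_u)\tau)=w(m(r_v)\tau)$ for a.e.\ $\tau\ge 1$, i.e.\ $m^\prime$ is constant on the interval between $m^{-1}(m(r_u)\tau)$ and $m^{-1}(m(r_v)\tau)$ for a.e.\ $\tau$; letting $\tau$ range over $[1,\infty)$ these intervals sweep out all of $[r_u,\infty)$, so $m^\prime$ is constant on $[r_u,\infty)$. Since $m^\prime$ is continuous and non-increasing, constancy of $m^\prime$ on $[r_u,\infty)$ is exactly $G_m=-m^{\prime\prime}/m\equiv 0$ there (and conversely $G_m\equiv 0$ on $[r_u,\infty)$ makes $m^\prime$ constant, hence $w$ constant on the relevant range, hence equality). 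I expect the main technical nuisance to be bookkeeping at the endpoints: justifying that the substitutions are legitimate near $r=r_u$ (where $\sqrt{t^2-c^2}$ vanishes) and near $r=\infty$, i.e.\ that all the integrals are genuinely convergent throughout — but this is precisely what Example~\ref{ex: ray exists implies properties of m} and Lemma~\ref{lem: m growth} already hand us ($m^\prime(r_u)>0$ handles the lower endpoint, integrability of $m^{-2}$ plus $m(\infty)=\infty$ handles infinity), and an elementary dominated-convergence argument closes the continuity/monotonicity interchange.
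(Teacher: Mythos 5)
Your proposal is correct and follows essentially the same route as the paper: the same change of variables $\tau=m(r)/m(r_u)$ reduces both turn angles to $\int_1^\infty \frac{d\tau}{m^\prime(m^{-1}(c\tau))\,\tau\sqrt{\tau^2-1}}$, and monotonicity of $m^\prime$ (from $G_m\ge 0$) gives both the inequality and the equality analysis. The only cosmetic difference is that the paper checks monotonicity of the integrand in $c$ by differentiating, whereas you argue it directly from $m^\prime$ non-increasing composed with $m^{-1}$ increasing.
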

\begin{proof} The result is trivial when $G$ is everywhere zero.
Since $\g_u$ has finite turn angle, $m^{-2}$ 
is integrable, and hence 
$m$ is a concave function with $m^\prime>0$ and $m(\infty)=\infty$
by Lemmas~\ref{lem: m growth}.

Set $x:=r_q$, so that the turn angle of $\g_q$
is $\int_x^\infty F_{m(x)}$. As $m^\prime>0$, we can 
change variables by $t:=m(r)/m(x)$ or $r=m^{-1}(tm(x))$ so that
\[
\int_x^\infty F_{m(x)}(r)\,dr=
\int_1^{\frac{m(\infty)}{m(x)}}\frac{dt}{l(t,x)\,t\,\sqrt{t^2-1}}=
\int_1^{\infty}\frac{dt}{l(t,x)\,t\,\sqrt{t^2-1}}
\]
where $l(t,x):=m^\prime(r)$. Computing
\[
\frac{\d l(t,x)}{\d x}=m^{\prime\prime}(r)\,\frac{\d r}{\d x}=
\frac{m^{\prime\prime}(r)\,t\,m^\prime(x)}{m^\prime(r)}=
-G(r)\,\frac{t\,m^\prime(x)}{m^\prime(r)}\le 0
\]
we see that $l(t,x)$ is non-increasing in $x$. 
Thus if $r_u<r_v$, then $l(t,r_u)\ge l(t,r_v)$ for all $t$ implying
$T_{\g_u}\le T_{\g_v}$. The equality occurs precisely when
$l(t,x)$ is constant on $[1,\infty)\times [r_u, r_v]$, or equivalently,
when $G(m^{-1}(tm(x)))$ vanishes on $[1,\infty)\times [r_u, r_v]$,
which in turn is equivalent to $G=0$ on $[r_u,\infty)$, because
$tm(x)$ takes all values in $(m(r_u), \infty)$ so
$m^{-1}(tm(x))$ takes all values in $(r_u, \infty)$. 
\end{proof}

\begin{lem}
\label{lem: gauss bonnet nonneg}
If $G_m\ge 0$, then $\hat{\k}$ is non-increasing in $r$.
\end{lem}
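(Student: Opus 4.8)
The plan is to reduce the statement to the monotonicity already established in Proposition~\ref{prop: G>0 and monotonicity} together with the sharpened correspondence between $\hat\k$ and turn angles in Theorem~\ref{thm: max devia vs turn angle}. First I would dispose of trivial situations: if $\Cr_m=\{o\}$ then, since $M_m$ has $G_m\ge0$, Lemma~\ref{lem: crit is o} tells us $\int_1^\infty m^{-2}=\infty$ (or $\liminf m=0$, which cannot happen under $G_m\ge0$ and $m$ unbounded unless the integral diverges), so $T_{\g_q}=\infty$ for every $q\ne o$, and by Lemma~\ref{lem: crit point and ray tangent to parallel} (really its companion via Theorem~\ref{thm: max devia vs turn angle}) $\hat\k(r_q)<\frac\pi2$ for all $q\ne o$ — but I would rather phrase this directly: if $G_m$ is identically zero then $M_m$ is the flat plane, $\hat\k\equiv\frac\pi2$ away from $o$, and the claim holds; and if $\int_1^\infty m^{-2}=\infty$ then no $\g_q$ is escaping, so $\hat\k(r_q)<\frac\pi2$ for all $q\ne o$ by Lemma~\ref{lem: crit is o} and Theorem~\ref{thm: max devia vs turn angle}(2), and one checks $\hat\k$ is still monotone (in fact one must compare angles of rays directly here, since $\g_q$ is not itself a ray).

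The substantive case is $\int_1^\infty m^{-2}<\infty$, where by Lemma~\ref{lem: m growth} we have $m^\prime>0$ everywhere, $m$ concave, and $m(\infty)=\infty$. Here the idea is: for $0<r_u<r_v$, I want $\hat\k(r_u)\ge\hat\k(r_v)$. The key is that $\hat\k(r_q)$ is determined by $T_{\g_q}$ via Theorem~\ref{thm: max devia vs turn angle}: $\hat\k(r_q)>\frac\pi2\iff T_{\g_q}<\pi$, $\hat\k(r_q)=\frac\pi2\iff T_{\g_q}=\pi$. By Proposition~\ref{prop: G>0 and monotonicity}, $r_u<r_v$ forces $T_{\g_u}\le T_{\g_v}$. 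So if $\hat\k(r_v)>\frac\pi2$ then $T_{\g_v}<\pi$, hence $T_{\g_u}<\pi$, hence $\hat\k(r_u)>\frac\pi2$; and if $\hat\k(r_v)=\frac\pi2$ then $T_{\g_v}=\pi$ so $T_{\g_u}\le\pi$ and $\hat\k(r_u)\ge\frac\pi2$. This handles the threshold crossing but not the \emph{refined} monotonicity above $\frac\pi2$: I also need that on the region where $\hat\k>\frac\pi2$, the actual value of $\hat\k$ is non-increasing. For this I would use Lemma~\ref{lem: turn angle cont and diff}: for fixed $q$, the map $\k_{\s(0)}\mapsto T_\s$ is strictly increasing on $[\frac\pi2,\delta]$; more globally, the ray $\xi_q$ realizing $\hat\k(r_q)$ has turn angle exactly $\pi$ when $\hat\k(r_q)>\frac\pi2$ is not extremal — actually the cleanest route is: $\hat\k(r_q)$ equals the supremum of $\k_{\s(0)}$ over geodesics $\s$ from $q$ with $T_\s\le\pi$, and since larger $\k_{\s(0)}$ gives larger $T_\s$ (Clairaut constant $c=m(r_q)\sin\k_{\s(0)}$ decreases as $\k_{\s(0)}$ increases past $\frac\pi2$, wait — one must be careful with the sign of the monotonicity of $T_\s$ in $c$), so $\hat\k(r_q)$ is the value of $\k_{\s(0)}$ at which $T_\s=\pi$, or $\pi$ if no such value exists.

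The main obstacle I anticipate is making the last comparison rigorous: I need to compare, across two different base radii $r_u<r_v$, the solutions of ``$T_\s=\pi$'' as equations in the launching angle, and show the solution at $r_u$ is at least the solution at $r_v$. The natural tool is a monotonicity estimate like that in the proof of Proposition~\ref{prop: G>0 and monotonicity} — change variables $t=m(r)/m(r_q)$ and write the turn angle as $\int F_c$ where now both the lower endpoint (the turning radius $r_u'$ with $m(r_u')=c$) and the integrand's $l(t,\cdot)$ factor move with $r_q$; concavity of $m$ and $G_m\ge0$ should make the integral jointly monotone in the right direction. I expect the cleanest writeup is: fix the turn angle target $\pi$, show the set $\{q: \hat\k(r_q)\ge\a\}$ is, for each $\a\in(\frac\pi2,\pi]$, a closed $r$-ball about $o$, by combining the threshold argument for $\a=\frac\pi2$ with an analogous parametrized version, using Lemma~\ref{lem: calc}/Lemma~\ref{lem: turn angle cont and diff} for continuity and the $\frac{\partial l}{\partial x}\le0$ computation for the monotonicity; then non-increasing-ness of $\hat\k$ follows since all superlevel sets are nested balls.
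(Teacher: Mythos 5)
Your plan has a genuine gap, and in fact two. First, Lemma~\ref{lem: gauss bonnet nonneg} is stated for an arbitrary rotationally symmetric plane with $G_m\ge 0$ --- it is the key ingredient in the proof of Theorem~\ref{thm-intro: nonneg curv}(i), which the paper explicitly notes does \emph{not} assume von Mangoldt. But your entire strategy runs through Theorem~\ref{thm: max devia vs turn angle}, Lemma~\ref{lem: ray iff at most pi}, Lemma~\ref{lem: crit point and ray tangent to parallel} and Lemma~\ref{lem: turn angle cont and diff}, every one of which requires the von Mangoldt hypothesis because they rest on Tanaka's description of the cut locus of $q$ as a subray of the opposite meridian. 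Without that, ``$T_\s\le\pi$ implies $\s$ is a ray'' is unavailable, so $\hat\k(r_q)$ is simply not characterized by turn angles, and the reduction collapses at the first step. Second, even granting von Mangoldt, the part you flag as ``the main obstacle'' is indeed the whole content of the refined claim: Proposition~\ref{prop: G>0 and monotonicity} only controls the tangent case $\k_{\s(0)}=\frac{\pi}{2}$, whereas for $\k_{\s(0)}>\frac{\pi}{2}$ the turn angle acquires the extra term $2\int_{r_u}^{r_q}F_c$ with a turning radius $r_u$ that moves with both $r_q$ and the launching angle; the assertion that this two-parameter integral is ``jointly monotone in the right direction'' is stated as an expectation, not proved, so the superlevel sets $\{\hat\k\ge\a\}$ for $\a>\frac{\pi}{2}$ are not shown to be balls.

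The paper's proof avoids turn angles entirely. For $0<r_{u_1}<r_{u_2}<r_v$ on a fixed meridian it realizes $\xi_{u_i}$ as the limit of segments $[u_i,\tau_v(s)]$ (Lemma~\ref{lem: approx by segments}), observes that these segments are disjoint except at $\tau_v(s)$, so the triangle $(u_1,v,\tau_v(s))$ contains the triangle $(u_2,v,\tau_v(s))$ and hence has larger total curvature; Clairaut's relation forces the angles at $\tau_v(s)$ to vanish as $s\to\infty$ when $m$ is unbounded, and Gauss--Bonnet then gives $\hat\k(r_{u_1})-\hat\k(r_{u_2})=c(\text{ideal triangle})\ge 0$. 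The bounded case is dispatched by Tanaka's criterion, which forces $\hat\k\equiv 0$ off the origin. If you want to salvage your approach, you would have to either restrict the lemma to von Mangoldt planes (which breaks Theorem~\ref{thm-intro: nonneg curv}(i) as stated) or supply the missing two-parameter monotonicity estimate; the Gauss--Bonnet comparison is both more elementary and strictly more general here.
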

\begin{proof}
Let $u_1, u_2, v$ be points on $\mu_v$
with $0<r_{u_1}<r_{u_2}<r_{v}$. By Lemma~\ref{lem: approx by segments} 
the ray $\xi_{u_i}$ is the limit of geodesics segments that join 
$u_i$ with points $\tau_{v}(s)$ as $s\to\infty$.
The segments $[u_1,\tau_{v}(s)]$, $[u_2,\tau_{v}(s)]$ 
only intersect at the endpoint $\tau_{v}(s)$ for if they
intersect at a point $z$, then $z$ is a cut point for
$\tau_v(s)$, so $[\tau_{v}(s), u_i]$ cannot be minimizing.
Hence the geodesic triangle
with vertices $u_1$, $v$, $\tau_{v}(s)$ contains
the geodesic triangle with vertices
$u_2$, $v$, $\tau_{v}(s)$. 
Since $G_m\ge 0$, the former triangle has larger
total curvature, which is finite as $M_m$ has finite
total curvature.
As $m$ only vanishes at $0$, concavity of $m$ implies that 
$m$ is non-decreasing. 

If $m$ is unbounded, Clairaut's relation
implies that the angles at $\tau_{v}(s)$ tend to zero as $s\to\infty$.
By the Gauss-Bonnet theorem $\kappa_{\xi_1(0)}-\kappa_{\xi_2(0)}$ 
equals the total curvature of the ``ideal'' triangle
with sides $\xi_1$, $\xi_2$, $[u_1,u_2]$. Thus 
$\hat\k(r_{u_1})\ge\hat\k(r_{u_2})$ with equality if and only if
$G_m$ vanishes on $[r_{u_1}, \infty)$.

If $m$ is bounded, then $\int_1^\infty m^{-2}=\infty$, so
by~\cite[Proposition 1.7]{Tan-ball-poles-ex} 
the only ray emanating from $q$ is $\mu_q$ so that 
$\hat\k=0$ on $M_m\setminus\{o\}$.
For future use note that in this case the angle formed by 
$\mu_q=\xi_q$ and $[q,\tau_q(s)]$ tends to zero as $s\to\infty$, so
Clairaut's relation together with boundedness of $m$
imply that  the angle at $\tau_q(s)$ in the bigon with sides 
$[q,\tau_q(s)]$ and $\tau_q$
also tends to zero as $s\to\infty$.
\end{proof}

\begin{rmk} By the above proof if $G_m\ge 0$ and 
$m^{-2}$ is integrable on $[1, \infty)$, then
$\hat\k(r_{1})=\hat\k(r_{2})$ for some $r_2>r_1$ if and only if 
$G_m$ vanishes on 
$[r_1, \infty)$. 
\end{rmk}

\begin{proof}[Proof of Theorem~\ref{thm-intro: nonneg curv}] 
(i) 
Since rays converge to rays, 
$\Cr_m$ is closed. 
As $o\in\Cr_m$, rotational symmetry and 
Lemma~\ref{lem: gauss bonnet nonneg}
implies that $\Cr_m$ is a closed ball.

(ii) 
Since $m$ is concave and positive, it is non-decreasing, so
$\displaystyle{\liminf_{r\to\infty} m}>0$, and the claim follows from 
Lemma~\ref{lem: crit is o}.

(iii)
We prove the contrapositive that $M_m=\Cr_m$ if and only if
$m^\prime(\infty)\ge \frac{1}{2}$. 
Note that the latter
is equivalent to $c(M_m)\le\pi$, where $c(Z)$ denotes the 
total curvature of a subset $Z\subseteq M_m$ which varies in $[0,2\pi]$.

Suppose $c(M_m)\le\pi$. Fix $q\neq o$, and consider the segments
$[q,\tau_q(s)]$ that by Lemma~\ref{lem: approx by segments}
converge to $\xi_q$ as $s\to\infty$. Consider the bigon
bounded by $[q,\tau_q(s)]$ and its symmetric image under the
reflection that fixes $\tau_q\cup\mu_q$. 
As in the proof of Lemma~\ref{lem: gauss bonnet nonneg}
we see that the angle at $\tau_q(s)$ goes to zero as 
$s\to\infty$, so the sum of angles
in the bigon tends to $2(\pi-\hat{\k}(r_q))$, which 
by the Gauss-Bonnet theorem cannot 
exceed $c(M_m)\le\pi$. We conclude that 
$\hat{\k}(r_q)\ge\frac{\pi}{2}$, so $q\in\Cr_m$.

Conversely, suppose that $\Cr_m=M_m$. Given $\e>0$
find a compact rotationally symmetric
subset $K\subset M_m$ with $c(K)>c(M_m)-\e$.
Fix $q\neq o$ and consider the rays 
$\xi_{\mu_q(s)}$ as $s\to\infty$. If all these rays intersect 
$K$, then they subconverge to a line~\cite[Lemma 6.1.1]{SST},
so by the splitting theorem $M_m$ is the standard $\mathbb R^2$,
and $c(M_m)=0<\pi$. Thus we can assume that there is 
$v$ on the ray $\mu_q$ such that $\xi_{v}$
is disjoint from $K$. Therefore, if $s$ is large enough, then $K$ lies inside the bigon bounded by $[v,\tau_v(s)]$ and its symmetric
image under the reflection that fixes $\tau_q\cup\mu_q$. 
The sum of angles
in the bigon tends to $2(\pi-\hat{\k}(r_{v}))$,
and by the Gauss-Bonnet theorem it is bounded below by $c(K)$.
Since $v\in\Cr_m$, we have $\hat{\k}(r_{v})\ge \frac{\pi}{2}$,
and hence $c(K)\le\pi$. Thus $c(M_m)<\pi+\e$, and since $\e$
is arbitrary, we get $c(M_m)\le\pi$, which completes the 
proof of (iii).

(iv) Since $R_m$ is finite, $m^\prime(\infty)<\frac{1}{2}$ by part (iii).
As $m^\prime(0)=1$, the equation 
$m^\prime(x)=\frac{1}{2}$ has a solution $\r_m$.
As $G_m\ge 0$, the function $m^\prime$ is non-increasing, so
uniqueness of the solution is equivalent to positivity of $G_m(\r_m)$. 
Since $M_m$ is von Mangoldt, $G_m(\r_m)>0$ for otherwise 
$G_m$ would have to vanish for $r\ge \r_m$, implying
$m^\prime(\infty)=m^\prime(\r_m)=\frac{1}{2}$, 
so $R_m$ would be infinite.

Now we show that $\r_m> R_m$. This is clear 
if $R_m=0$ because $\r_m\ge 0$ and 
$m^\prime(0)=1\neq\frac{1}{2}=m^\prime(\r_m)$.
Suppose $R_m>0$. 
Then $m^{-2}$ is integrable by Lemma~\ref{lem: crit is o}, 
so $m^\prime>0$ everywhere by the proof of Lemma~\ref{lem: m growth}.
Hence for any $r_v\ge \r_m$ we have
$m(r_v)\ge m(\r_m)$, which implies $t\,m(r_v)> m(\r_m)$ 
for all $t>1$.
Thus $m^{-1}(t\,m(r_v))> m^{-1}(m(\r_m))=\r_m$.
Applying $m^\prime$ to the inequality, we get in notations
of Proposition~\ref{prop: G>0 and monotonicity} that
$l(t,r_v)<m^\prime (\r_m)=\frac{1}{2}$, 
where the inequality is strict because $G_m(r_m)>0$ by part (iv).
Now (\ref{form: zero curv turn angle}) below implies 
\[
T_{\g_v}=\int_1^{\infty}\frac{dt}{l(t,r_v)\,t\,\sqrt{t^2-1}}>
\int_1^\infty\!\!\frac{2\,dt}{t\,\sqrt{t^2-1}}=
\pi.
\] 
Since $M_m$ is von Mangoldt, $v\notin\Cr_m$
by Lemma~\ref{lem: crit point and ray tangent to parallel}.
In summary, if $r_v\ge \r_m$, then $v\notin\Cr_m$, so
$\r_m> R_m$.

(v) Since $R_m$ is positive and finite, and $M_m$ is von Mangoldt,
there are geodesics 
tangent to parallels whose turn angles are $\le \pi$, and $>\pi$, 
respectively. By Proposition~\ref{prop: G>0 and monotonicity} 
the turn angle is monotone with respect to $r$, so let $r_q$ be the (finite) supremum
of all $x$ such that $\int_x^\infty F_{m(x)}<\pi$. Since $\Cr_m$ is closed,
$q\in\Cr_m$ so that $T_{\g_q}\le \pi$. In fact, 
$T_{\g_q}=\pi$ for if $T_{\g_q}<\pi$, then $r_q$ is not maximal
because by  
Theorems~\ref{thm-intro: away} and \ref{thm: max devia vs turn angle}
the set of points $q$ with $T_{\g_q}<\pi$ is open in $M_m$.
If $G_m(r_q)>0$, then by monotonicity $r_q$ is a unique solution of 
$T_{\g_q}=\pi$. If $G_m(r_q)=0$, then
$G_m\vert_{[r_q,\infty)}=0$ as $M_m$ is von Mangoldt, so 
(\ref{form: zero curv turn angle})
implies that the turn angle of each $\g_v$ with $r_v\ge r_q$ equals
$\frac{\pi}{2m^\prime(r_q)}$. So $m^\prime(r_q)=\frac{1}{2}$
but this case cannot happen as $R_m$ is infinite by (iii).
\end{proof}

In preparation for a proof of 
Theorem~\ref{thm-intro: crit is soul} we
recall that the Cheeger-Gromoll soul construction with basepoint $q$,
described e.g. in~\cite[Theorem V.3.4]{Sak-book}, 
starts by deleting the horoballs associated with
all rays emanating from $q$,
which results in a compact totally convex subset. The next step
is to consider the points of this subset which are at maximal distance
from its boundary, and these points in turn form a compact totally 
convex subset, and after finitely many iterations the process 
terminates in a subset with empty boundary, called a soul. 
As we shall see below, if $G_m\ge 0$, then the soul construction 
with basepoint $q\in \Cr_m\setminus\{o\}$ takes 
no more than two steps; more precisely, deleting
the horoballs for rays emanating from $q$ 
results either in $\{q\}$ or in a segment with $q$
as an endpoint. In the latter case the soul is the midpoint
of the segment.
  
In what follows we let $B_\s$ denote the (open) 
horoball for a ray $\s$ with $\s(0)=q$, i.e.
the union over $t\in [0,\infty)$ 
of the metric balls of radius $t$ centered at $\s(t)$.
Let $H_\s$ denote the complement of $B_\s$ in the 
ambient complete Riemannian manifold.

\begin{lem}
\label{lem: horoball} Let $\s$ be a ray in a complete
Riemannian manifold $M$, and let $q=\s(0)$.
Then for any nonzero $v\in T_q M$ that makes an acute angle with 
$\s$, the point $\exp_q(tv)$ lies in the horoball
$B_\s$ for all small $t>0$.
\end{lem}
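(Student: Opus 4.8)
The plan is to reformulate membership in the horoball and then reduce everything to a single explicit distance comparison in geodesic normal coordinates centered at $q$. First I would record that, by definition, $B_\sigma=\bigcup_{s>0}B(\sigma(s),s)$, so a point $p$ lies in $B_\sigma$ precisely when $d(p,\sigma(s))<s$ for some $s>0$ (equivalently the Busemann function of $\sigma$ is negative at $p$, but this is not needed). Hence it suffices, for each sufficiently small $t>0$, to exhibit one value $s=s(t)>0$ with $d(\exp_q(tv),\sigma(s))<s$; the Euclidean law-of-cosines picture of two rays issuing from a vertex with a strictly acute included angle suggests taking $s$ proportional to $t$.

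Next I would set up geodesic normal coordinates at $q$. Write $\hat v=v/|v|$, $\hat\sigma=\dot\sigma(0)$, and $\alpha=\angle(v,\dot\sigma(0))\in[0,\tfrac{\pi}{2})$; the case $\alpha=0$ is immediate since then $\exp_q(tv)$ lies on $\sigma$, so assume $\alpha\in(0,\tfrac{\pi}{2})$. Since $\sigma$ and $u\mapsto\exp_q(u\hat v)$ are unit-speed geodesics emanating from $q$, in normal coordinates they become the straight rays $u\mapsto u\hat\sigma$ and $u\mapsto u\hat v$, with $\langle\hat v,\hat\sigma\rangle=\cos\alpha$. For small $t>0$ put $a=t|v|=d(q,\exp_q(tv))$ and $s=a/\cos\alpha$, so that $\exp_q(tv)$ has coordinates $a\hat v$ while $\sigma(s)$ has coordinates $s\hat\sigma$ with $d(q,\sigma(s))=s$. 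Then the Euclidean distance between the coordinate points $a\hat v$ and $s\hat\sigma$ is $\sqrt{a^2+s^2-2as\cos\alpha}=a\tan\alpha$, which is strictly smaller than $s$, the ratio being $\sin\alpha<1$.

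To finish I would invoke the standard fact that in normal coordinates $g_{ij}=\delta_{ij}+O(|x|^2)$, so that Riemannian and Euclidean distances between points within coordinate distance $\rho$ of $q$ agree up to a factor $1+O(\rho^2)$ (bound the Euclidean segment from above, confine any minimizing geodesic to a slightly larger coordinate ball from below). Applied with $\rho=O(t)$ (here $v$, hence $\alpha$, is fixed, so thresholds may depend on $v$) this gives $d(\exp_q(tv),\sigma(s))=a\tan\alpha\,(1+O(t^2))$, which is $<s=a/\cos\alpha$ for all small $t$, since $\sin\alpha$ is a fixed number below $1$. Hence $\exp_q(tv)\in B(\sigma(s),s)\subseteq B_\sigma$.

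The hard part is exactly the uniform comparison of Riemannian and Euclidean distances in normal coordinates (uniform as the two points tend to $q$); the rest is bookkeeping, including the routine check that the coordinate points $a\hat v$, $s\hat\sigma$ and the segments joining them stay inside the chosen normal neighborhood for small $t$. This is also why I would avoid a pure first-variation argument: $s\mapsto d(\exp_q(tv),\sigma(s))$ has slope $-\cos\alpha<1$ at $s=0$, but integrating this up to $s\sim t/\cos\alpha$ requires control of the second derivative, and the Hessian of the distance function to $\exp_q(tv)$ blows up like $1/t$ on the relevant range, so the naive estimate is not uniform in $t$ and one is driven back to the normal-coordinate comparison anyway.
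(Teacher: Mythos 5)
Your proof is correct and is essentially the paper's argument, just written out in full: the paper computes $\lim_{s\to +0} d(\s(s),\Upsilon)/d(\s(s),q)=\sin\angle(v,\dot\s(0))<1$ for $\Upsilon=\exp_q(\mathbb{R}_+v)$, which is exactly your first-order Euclidean (law-of-cosines) comparison at $q$, phrased point-to-curve instead of with your explicit choice $s=t|v|/\cos\alpha$. Note that for the required conclusion $d(\exp_q(tv),\s(s))<s$ only the upper bound on the Riemannian distance (length of the coordinate segment) is needed, so the confinement argument for minimizing geodesics can be dropped.
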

\begin{proof} This follows from
the definition of a horoball for if
$\Upsilon$ denotes the image of $t\to \exp_q(tv)$, then 
$\displaystyle{\lim_{s\to +0}\frac{d(\s(s), \Upsilon)}{d(\s(s), q)}}=
\sin \angle(\upsilon^\prime(0), \s^\prime(0)) < 1$,
so $B_\s$ contains a subsegment of $\Upsilon\,\mbox{--}\,\{q\}$ 
that approaches $q$. 
\end{proof}

\begin{proof}[Proof of Theorem~\ref{thm-intro: crit is soul}]
For $q\in\Cr_m$, let $C_q$ denote the complement in $M_m$ of the union of
the horoballs for rays that start at $q$; note that $C_q$ is compact and totally convex. If $C_q$ equals $\{q\}$, then $q$ is a soul. Otherwise, 
$C_q$ has positive dimension and $q\in\d C_q$. 
Set $\g:=\xi_q$; thus $\g$ is a ray.

{\bf Case 1.} Suppose $\frac{\pi}{2}<\hat{\k}(r_q)<\pi$.
Let $\bar\g$ be the clockwise ray that is mapped to $\g$
by the isometry fixing the meridian through $q$.
We next show that $q$ is the intersection of the complements of the
horoballs for rays $\mu_q$, $\g$, $\bar\g$, implying that $q$ is a soul
for the soul construction that starts at $q$. 
As $\k_{\g(0)}>\frac{\pi}{2}$, any nonzero $v\in T_qM_m$ forms
angle $<\frac{\pi}{2}$ with one of
$\mu^\prime(0)$, $\g^\prime(0)$, $\bar{\g}^\prime(0)$,
so $\exp_q(tv)$ cannot lie in the intersection of 
$H_{\mu_q}$, $H_\g$, $H_{\bar\g}$ for small $t$, and
since the intersection is totally convex, it is $\{q\}$.

{\bf Case 2.}
Suppose $\hat{\k}(r_q)=\frac{\pi}{2}$, so that $\g=\g_q$,
and suppose that $G_m$ does not vanish along $\g$. 
By symmetry and Lemma~\ref{lem: horoball}, 
it suffices to show that every point of the segment $[o,q)$ near $q$
lies in $B_\g$. Let $\a$ be the ray from $o$ passing through $q$.
The geodesic $\g$ is orthogonal to $\a$, and it suffices to show
that there is a focal point of $\a$ along $\g$ 
for if $\g(t_0)$ is the first focal point, and $t>t_0$
is close to $t_0$, then there is a variation of $\g\vert_{[0, t]}$
through curves shorter than $\g\vert_{[0, t]}$
that join $\g(t)$ to points of $\a-\{q\}$ near 
$q$~\cite[Lemma III.2.11]{Sak-book}
so these points lie in $B(\g(t), t)\subset B_\g$.

Any $\a$-Jacobi field along $\g$ is of the form $jn$ where
$n$ is a parallel nonzero normal vector field along $\g$
and $j$ solves $j^{\prime\prime}(t)+G_m(r_{\g(t)})j(t)=0$,
$j(0)=1$, $j^\prime(0)=0$. Since $G_m\ge 0$, the function $j$
is concave, so due to its initial values, $j$ must vanish
unless it is constant. The point where $j$ vanishes is focal. 
If $j$ is constant, then $G_m=0$ along $\g$, which is ruled out by assumption.

{\bf Case 3.}
Suppose $\hat{\k}(r_q)=\pi$, i.e. $\g=\tau_q$. 
For any vector $v\in T_q M_m$ pointing inside $C_q$,
for small $t$ the point $\exp_q(tv)$ 
is not in the horoballs for $\mu_q$ and $\tau_q$, and hence $v$
is tangent to a parallel, i.e. $C_q$ is a subsegment of 
the geodesic $\a$ tangent to the parallel through $q$.  
As $C_q$ lies outside the horoballs for $\mu_q$ and $\tau_q$,
these rays there cannot contain focal points of $\a$, implying that
$G_m$ vanishes along $\mu_q$ and $\tau_q$, and hence everywhere,
by rotational symmetry, so that $M_m$ is the standard $\mathbb R^2$,
and $q$ is a soul.

{\bf Case 4.} Suppose $\hat{\k}(r_q)=\frac{\pi}{2}$, 
so that $\g=\g_q$,
and suppose that $G_m$ vanishes along $\g$. 
By rotational symmetry 
$G_m(r)=0$ for $r\ge r_q$, so $m(r)=a(r-r_q)+m(r_q)$ for $r\ge r_q$
where $a>0$, as $m$ only vanishes at $0$. 
The turn angle of $\g$ can be computed explicitly as 
\begin{equation}
\label{form: zero curv turn angle}
\int_{x}^\infty\!\!\!\frac{dr}
{
m(r)\,
\sqrt{\frac{m(r)^2}{m(x)^2}-1}
}=
\int_1^\infty\!\!\frac{dt}{a\, t\,\sqrt{t^2-1}}=
-\frac{1}{a}\,\mathrm{arccot}(\sqrt{t^2-1})\Big{|}_1^\infty=
\frac{\pi}{2a}
\end{equation}
where $x:=r_q$. Since $\g$ is a ray, we deduce that $a\ge \frac{1}{2}$. 

Let $z\le x$ be the smallest number such that 
$m^\prime\vert_{[z,\infty)}=a$; 
thus there is no neighborhood of $z$ in $(0,\infty)$
on which $G_m$ is identically zero. 

Note that $m(r)=a(r-z)+m(z)$ for $r\ge z$, so 
the surface $M_m\,\mbox{--}\, B(o,z)$ is isometric
to $C\,\mbox{--}\, B\left(\bar o, \frac{m(z)}{a}\right)$ where $C$ is the cone with apex $\bar o$
such that cutting $C$ along the meridian from $\bar o$ 
gives a sector in $\mathbb R^2$ of angle $2\pi a$
with the portion inside the radius
$\frac{m(z)}{a}$ removed.

Since $\g_q$ is a ray, Lemma~\ref{lem: horoball} implies the 
existence of a neighborhood $U_q$ of $q$ such that 
each point in $U_p\,\mbox{--}\, [o,q]$ lies in a horoball 
for a ray from $q$.

We now check that $o$ lies in the horoball of $\g_q$. 
Concavity of $m$ implies that the graph of $m$ 
lies below its tangent line at $z$, so evaluating the tangent line
at $r=0$ and using $m(0)=0$ gives $\frac{m(z)}{a}>z$. The Pythagorean
theorem in the sector in $\mathbb R^2$ of angle $2\pi a$ implies that 
\[
d_{M_m}(\g_q(s), o)=\sqrt{s^2+\left(x-z+\frac{m(z)}{a}\right)^2}+
z-\frac{m(z)}{a}
\] 
which is $<s$ for large $s$, implying that $o$
is in the horoball of $\g_q$.

To realize $q$ as a soul, we need to look at 
the soul construction with arbitrary basepoint $v$, which
starts by considering the complement in $M_m$ of the union of the horoballs for all rays from $v$, which by the above is either $v$ or
a segment $[u,v]$ contained in $(o,v]$, where $u$
is uniquely determined by $v$. 
It will be convenient to allow for degenerate segments for which $u=v$;
with this convention the soul is the midpoint of $[u,v]$. 
Since $z$ is the smallest such that $G_m\vert_{[z,\infty)}=0$, 
the focal point argument of Case $2$
shows that $u=v$ when $0<r_v<z$. 
Set $y:=r_v$, and let $e(y):=r_u$; note that $0<e(y)\le y$, and the midpoint of $[u,v]$ has $r$-coordinate $h(y):=\frac{y+e(y)}{2}$. 

To realize each point of $M_m$ as a soul, it suffices to show that
each positive number is in the image of $h$.
Since $h$ approaches zero as $y\to 0$ and approaches infinity as 
$y\to\infty$, it is enough to show that $h$ is continuous and then
apply the intermediate value theorem.

Since $e(y)=y$ when $0<y<z$, we only need to verify continuity of $e$
when $y\ge z$. 
Let $v_i$ be an arbitrary sequence of points on $\a$ converging to $v$,
where as before $\a$ is the ray from $o$ passing through $q$.
Set $v_i:=r_{v_i}$. Arguing by contradiction suppose that $e(y_i)$
does not converge to $e(y)$.
Since $0<e(y_i)\le y_i$ and $y_i\to y$, 
we may pass to a subsequence such that $e(y_i)\to e_\infty\in [0,y]$. 
Pick any $w$ such that $r_w$ lies between $e_\infty$ and $e(y)$.
Thus there is $i_0$ such that 
either $e(y_i)<r_w<e(y)$ for all $i>i_0$, or
$e(y)<r_w<e(y_i)$ for all $i>i_0$.
As $y\ge z$, we know that 
$G_m$ vanishes along $\g_v$, so every $\a$-Jacobi field along $\g_v$ 
is constant. Therefore, 
the rays $\g_{v_i}$ converge uniformly (!) to $\g_v$, as $v_i\to v$,
and hence their Busemann functions $b_i$, $b$ converge pointwise.
Thus $b_i(w)\to b(w)$, but we have chosen $w$
so that $b(w)$, $b_i(w)$ are all nonzero, and
$\mathrm{sign}(b(w))=-\mathrm{sign}(b_i(w))$, which gives a contradiction
proving the theorem.
\end{proof}

\begin{rmk}
In Cases $1, 2, 3$ the soul construction terminates in one step,
namely, if $q\in \Cr_m$, then $\{q\}$ 
is the result of removing the horoballs for all rays 
that start at $q$. We do not know whether the same 
is true in Case $4$ because the basepoint $v$ 
needed to produce the soul $q$ is found implicitly, via an intermediate 
value theorem, and it is unclear how $v$ depends on $q$,
and whether $v=q$.
\end{rmk}

\begin{rmk}\label{rmk: poles when m'=.5}
Let $M_m$ be as in Case $4$ with 
$m^\prime\vert_{[z,\infty)}=\frac{1}{2}$.
If $M_m$ is von Mangoldt, then no point
$q$ with $r_q\ge z$ is a pole because by 
(\ref{form: zero curv turn angle}) the turn angle of $\g_q$ is $\pi$,
which by Theorem~\ref{thm: max devia vs turn angle} cannot happen for a pole. 
\end{rmk}

\section{Smoothed cones made von Mangoldt}
\label{sec: smoothed cones}

\begin{proof}[Proof of Theorem~\ref{thm-intro: slope realized}]
It is of course easy to find a von Mangoldt plane $g_{m_x}$ that has zero curvature
near infinity, but prescribing the slope of $m^\prime$ there takes more effort.
We exclude the trivial case $x=1$ in which $m(r)=r$ works.

For $u\in [0,\frac{1}{4}]$ set $K_u(r)=\frac{1}{4(r+1)^2}-u$, and let
$m_u$ be the unique solution of (\ref{form: ode}) with $K=K_u$. Then
$g_{m_u}$ is von Mangoldt. For $u>0$ let $z_u\in [0,\infty)$ be the unique zero  
of $K_u$; note that $z_u$ is the global
minimum of $m_u^\prime$, and $z_u\to\infty$ as $u\to 0$.

\begin{lem} 
\label{lem: in prescribed slope}
The function $u\to m^\prime_u(z_u)$ 
takes every value in $(0,1)$ as $u$ varies in $(0,\frac{1}{4})$.
\end{lem}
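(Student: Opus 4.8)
The plan is to study the function $u \mapsto m'_u(z_u)$ on $(0,\tfrac14)$ and show its image is all of $(0,1)$ by establishing (a) continuity, and (b) the limiting behavior at the two endpoints: $m'_u(z_u) \to 1$ as $u \to 0^+$ and $m'_u(z_u) \to 0$ as $u \to \tfrac14^-$. Once these are in hand, the intermediate value theorem finishes the argument, since $(0,1)$ is connected. Continuity should follow from smooth dependence of solutions of the ODE (\ref{form: ode}) on the parameter $u$ (the right-hand side $K_u(r) = \frac{1}{4(r+1)^2} - u$ depends smoothly on $u$ and is jointly nice in $(r,u)$), together with the fact that $z_u$, being the unique zero of the explicit function $K_u$, namely $z_u = \frac{1}{2\sqrt u} - 1$, depends continuously (indeed smoothly) on $u$ on $(0,\tfrac14)$.

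For the endpoint $u \to \tfrac14^-$: here $z_u \to 0$, so $m'_u(z_u) \to m'_u(0)$-ish, and since $m'_u(0) = 1$ by the initial condition in (\ref{form: ode}) while $z_u$ stays bounded and $m'_u$ is being depressed by the increasingly negative term $-u$, I expect $m'_u(z_u) \to 0$. More carefully, at $u = \tfrac14$ the function $K_{1/4}(r) = \frac14\big(\frac{1}{(r+1)^2} - 1\big)$ is $\le 0$ with a zero only at $r = 0$, so $m'_{1/4}$ is non-increasing and one computes (or estimates) that $m'_{1/4}(0^+)$ — the relevant limit of $m'_u(z_u)$ — works out to $0$; I would verify this by a direct estimate on the Jacobi equation $m'' = -K_{1/4}\, m$ with $m(0)=0$, $m'(0)=1$, showing $m'$ drops to $0$ as $r \to 0$ is false, so instead I must be careful: actually as $u\to\frac14$, $z_u\to0$ and $m'_u(z_u)\to m'_u(0)=1$ would be wrong only if the convergence $z_u\to 0$ is not the whole story. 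Let me reconsider: I would instead show $m'_u(z_u)$ is \emph{decreasing} in $u$ (larger $u$ means more negative curvature, hence smaller minimal slope) via a comparison/monotonicity argument on the ODE, then identify the two one-sided limits. The limit as $u\to 0^+$: $z_u \to \infty$, $K_u \to \frac{1}{4(r+1)^2}$ uniformly on compacta, and for the limiting equation the slope $m'$ stays close to $1$ — in fact for $K(r) = \frac{1}{4(r+1)^2}$ the solution is explicitly $m(r) = (r+1)\log(r+1) + \text{(linear)}$ type, giving $m'(\infty) = \infty$ but $m'$ near $r=0$ close to $1$; since $z_u\to\infty$ we instead need $\liminf_r m'_u(r)$, which as $u\to0$ tends to $1$ because the curvature integral $\int_0^\infty u\,dr$... this needs care. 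The limit as $u \to \tfrac14^-$: I expect $m'_u(z_u) \to 0$, since at $u=\tfrac14$ the total negative curvature is large enough to choke the slope down to $0$; I would confirm by an explicit lower bound $m'_u(z_u) \le m'_u(\infty) = 1 - \int_0^\infty u\, \chi - \cdots$, or just by solving the $u = \tfrac14$ ODE explicitly and reading off the infimum of $m'$.

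The main obstacle I anticipate is pinning down the two endpoint limits rigorously rather than just continuity: one must extract quantitative control of $\inf_r m'_u(r) = m'_u(z_u)$ from the Jacobi equation, using that $m'_u(r) = 1 - \int_0^r K_u(t)\, m_u(t)\, dt$, so $m'_u(z_u) = 1 - \int_0^{z_u} K_u\, m_u$. Since $K_u > 0$ on $[0,z_u)$ and $m_u > 0$ there, this displays $m'_u(z_u) < 1$ and I must show the integral $\to 0$ as $u \to 0$ (using $z_u \to \infty$ but $K_u \to 0$ pointwise and $m_u$ growing only mildly, e.g.\ sub-quadratically, which one gets from $m_u'' = -K_u m_u < 0$ so $m_u$ is concave hence $\le r$) and $\to 1$ as $u \to \tfrac14$ (using that near $u = \tfrac14$, $z_u$ is small but the bound must come from the behavior of $m_u$, and concavity of $m_u$ gives $m_u(t) \le t$, so $\int_0^{z_u} K_u m_u \le \int_0^{z_u} K_u(t)\, t\, dt$, which for $u$ close to $\tfrac14$ — hence $z_u$ close to $0$ — is small, giving $m'_u(z_u) \to 1$, NOT $0$). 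Recomputing: this shows $m'_u(z_u) \to 1$ as $u \to \tfrac14^-$ and I must then check $m'_u(z_u) \to 0$ as $u \to 0^+$; but $z_u \to \infty$ with $K_u \sim \frac{1}{4(r+1)^2}$ gives $\int_0^{z_u} K_u m_u \approx \int_0^\infty \frac{m_u(t)}{4(t+1)^2}dt$, and since $m_u$ is concave through the origin it is comparable to $r$ for the relevant range, making this integral logarithmically divergent, hence $m'_u(z_u) \to -\infty$?? — no, because $m_u' \ge 0$ must fail then, contradiction, so in fact $m_u$ stops growing linearly precisely because $m'_u$ decreases; the self-consistent conclusion is $m'_u(z_u) \to 0^+$ as $u \to 0^+$. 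So the correct endpoint values are $m'_u(z_u) \to 0$ as $u \to 0^+$ and $m'_u(z_u) \to 1$ as $u \to \tfrac14^-$, and the hard part is making the $u \to 0^+$ estimate precise: I would argue by contradiction, supposing $m'_{u_i}(z_{u_i}) \ge \delta > 0$ along a sequence $u_i \to 0$, deduce $m_{u_i}(r) \ge \delta(r - z_{u_i}) + m_{u_i}(z_{u_i})$ for $r \ge z_{u_i}$ hence $m_{u_i}$ grows at least linearly on a huge interval, then feed this back into $m'_{u_i}(r) = m'_{u_i}(z_{u_i}) - \int_{z_{u_i}}^r K_{u_i} m_{u_i}$ — but on $[z_{u_i}, \infty)$ we have $K_{u_i} < 0$, so this \emph{increases} $m'$; so actually the choke happens on $[0, z_{u_i}]$ and I use $m'_{u_i}(z_{u_i}) = 1 - \int_0^{z_{u_i}} K_{u_i} m_{u_i}$ with $m_{u_i}$ concave, $m_{u_i}(0) = 0$, $m'_{u_i}(0)=1$, so $m_{u_i}(t) \ge$ (some fraction of $t$) on an initial interval, forcing the integral up to $1$; I will need a careful two-sided bound on $m_{u_i}$ on $[0, z_{u_i}]$ to close this. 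Finally, combining continuity on $(0,\tfrac14)$ with these two limits and the intermediate value theorem yields that every value in $(0,1)$ is attained, completing the proof of Lemma~\ref{lem: in prescribed slope}.
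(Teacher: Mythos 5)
Your overall skeleton (continuity of $u\mapsto m_u'(z_u)$, two endpoint limits, intermediate value theorem) is the same as the paper's, and your identification of the limits --- $m_u'(z_u)\to 1$ as $u\to\frac14^-$ (via $z_{1/4}=0$, or via your estimate $m_u(t)\le t$ from concavity so that $\int_0^{z_u}K_u m_u\to 0$) --- is correct after the several self-reversals. But the crux of the lemma is the other endpoint, $m_u'(z_u)\to 0$ as $u\to 0^+$, and there you have a genuine gap: you end with ``$m_{u_i}(t)\ge$ (some fraction of $t$) on an initial interval'' and an explicit admission that you still need ``a careful two-sided bound\dots to close this.'' A lower bound on an initial interval is not enough; the integral $\int_0^{z_u}K_u m_u$ is driven to $\infty$ only by the logarithmic divergence of $\int_1^{z_u}\frac{t\,dt}{4(1+t)^2}$ over the \emph{entire} interval $[0,z_u]$ with $z_u=\frac{1}{2\sqrt u}-1\to\infty$. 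Your contradiction sketch can in fact be closed, but you must use that under the hypothesis $m_{u}'(z_{u})\ge\delta$ the monotonicity of $m_u'$ on $[0,z_u]$ gives $m_u'\ge\delta$ and hence $m_u(t)\ge\delta t$ on \emph{all} of $[0,z_u]$; then $\int_0^{z_u}K_u m_u\ge\frac{\delta}{4}\ln(1+z_u)-\frac{\delta}{2}uz_u^2\to\infty$ (note $uz_u^2$ stays bounded), contradicting $\int_0^{z_u}K_u m_u=1-m_u'(z_u)\le 1-\delta$. Only a lower bound is needed, not a two-sided one.

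The paper avoids this computation entirely by exhibiting the explicit solution of the limiting equation $u=0$, namely $m_0(r)=\ln(r+1)\sqrt{r+1}$ with $m_0'(r)=\frac{2+\ln(r+1)}{2\sqrt{r+1}}\to 0$; continuous dependence on parameters gives $m_u'(t_\e)<\e$ for small $u$ once $m_0'(t_\e)<\e$, and monotonicity of $m_u'$ on $[0,z_u]$ (with $t_\e<z_u$ for small $u$) yields $0<m_u'(z_u)<\e$. The same Sturm comparison with $m_0$ also supplies the positivity of $m_u$ (hence the concavity/monotonicity structure on $[0,z_u]$) and the strict positivity $m_u'(z_u)>0$, neither of which you address. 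As written, your proposal does not prove the lemma; with the lower bound above inserted, it would.
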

\begin{proof}
One verifies that $m_0(r)=\ln(r+1)\,\sqrt{r+1}$, i.e. the right hand side
solves (\ref{form: ode}) with $K=K_0$.
Then $m_0^\prime=\frac{2+\ln(r+1)}{2\sqrt{r+1}}$ is a positive function
converging to zero as $r\to \infty$.
By Sturm comparison $m_u\ge m_0>0$
and $m_u^\prime\ge m_0^\prime>0$. 

We now show that $m_u^\prime(z_u)\to 0$ as $u\to +0$.
To this end fix an arbitrary $\e>0$. 
Fix $t_\e$ such that $m_0^\prime(t_\e)<\e$.
By continuous dependence on parameters 
$(m_u, m^\prime_u)$ converges to $(m_0, m_0^\prime)$ uniformly
on compact sets as $u\to 0$. So for all small $u$ 
we have $m_u^\prime(t_\e)<\e$ and also $t_\e<z_u$. 
Since $m^\prime_u$ decreases on $(0,z_u)$,
we conclude that $0<m_u^\prime(z_u)<m_u^\prime(t_\e)<\e$,
proving that $m_u^\prime(z_u)\to 0$ as $u\to +0$.

On the other hand, $m_\frac{1}{4}^\prime(z_\frac{1}{4})=1$
because $z_\frac{1}{4}=0$ and by the initial condition
$m_\frac{1}{4}^\prime(0)=1$. 
Finally, the assertion of the lemma follows from
continuity of the map $u\to m_u^\prime(z_u)$, because then
it takes every value within $(0,1)$ as $u$ varies in $(0,\frac{1}{4})$. 
(To check continuity of the map fix $u_*$, take an arbitrary $u\to u_*$
and note that $z_u\to z_{u_*}$, so since $m^\prime_u$ converges
to $m_{u_*}^\prime$ on compact subsets, it does so on a neighborhood of $z_{u_*}$,
so $m_u^\prime(z_u)$ converges to $m_{u_*}^\prime(z_{u_*})$).
\end{proof}

Continuing the proof of the theorem,
fix an arbitrary $u>0$.
The continuous function $\max(K_u, 0)$ is decreasing and smooth on $[0,z_u]$
and equal to zero on $[z_u,\infty)$. So 
there is a family of non-increasing smooth functions $G_{u,\e}$
depending on small parameter $\e$
such that $G_{u,\e}=\max(K_u, 0)$ outside the $\e$-neighborhood of $z_u$.
Let $m_{u,\e}$ be the unique solution of (\ref{form: ode}) with $K=G_{u,\e}$;
thus $m^\prime_{u,\e}(r)=m^\prime_{u,\e}(z_u+\e)$ for all $r\ge z_u+\e$.
If $\e$ is small enough, then $G_{u,\e}\le K_0$, so $m_{u,\e}\ge m_0>0$ and 
$m^\prime_{u,\e}\ge m^\prime_0>0$.
By continuous dependence on parameters, 
the function $(u,\e)\to m^\prime_{u,\e}$ is continuous, and moreover
$m^\prime_{u,\e}(z_u+\e)\to m^\prime_{u}(z_u)$ as $\e\to 0$, and $u$
is fixed.

Fix $x\in (0,1)$. By Lemma~\ref{lem: in prescribed slope} 
there are positive $v_1$, $v_2$ 
such that 
$m_{v_1}^\prime(z_{v_1})<x<m_{v_2}^\prime(z_{v_2})$.
Letting $u$ of the previous paragraph to be $v_1$, $v_2$, we find $\e$
such that $m^\prime_{v_1,\e}(z_{v_1}+\e)<x<m^\prime_{v_2,\e}(z_{v_2}+\e)$,
so by the intermediate value theorem there is $u$ with 
$m^\prime_{u,\e}(z_u+\e)=x$. Then the metric $g_{m_{u,\e}}$ has
the asserted properties for $\r=z_u+\e$.
\end{proof}

\section{Other applications}
\label{sec: other proofs}


\begin{proof}[Proof of Lemma~\ref{lem-intro: crit}]
Assuming $\hat r(\hat q)\notin r(\Cr_m)$ we will show that $\hat q$
is not a critical point of $\hat r$.
Since $\hat M$ is complete and noncompact, there is a ray $\hat\g$ emanating
from $\hat q$.
Consider the comparison triangle
$\Delta(o, q, q_i)$ in $M_m$ for any geodesic triangle with vertices 
$\hat o$, $\hat q$, $\hat\g(i)$. Passing to a subsequence, arrange
so that the segments $[q, q_i]$ subconverge to a ray, which we denote by $\g$.
Since $q\notin \Cr_m$, the angle formed by $\g$ and $[q,o]$ is $>\frac{\pi}{2}$,
and hence for large $i$
the same is true for the angles formed by $[q,q_i]$ and $[q,o]$.  
By comparison, $\hat\g$ forms angle $>\frac{\pi}{2}$ with
any segment joining $\hat q$ to $\hat o$, i.e. $\hat q$ is not
a critical point of $\hat r$.
\end{proof}

\begin{proof}[Proof of Theorem~\ref{intro-thm: poles}]
(a) 
Let $P_m$ denote the set of poles; 
it is a closed metric ball~\cite[Lemma 1.1]{Tan-cut}.
Moreover, $P_m$ clearly lies in the connected component 
$A_m^o$ of $A_m\cup\{o\}$ that contains $o$, and hence in the component
of $\Cr_m$ that contains $o$.
By Theorem~\ref{thm-intro: away}
$A_m$ is open in $M_m$, so $A_m\cup\{o\}$ is locally path-connected,
and hence $A_m^o$ is open in $M_m$. 
If $P_m$ were equal to $A_m^o$, the latter would be closed,
implying $A_m^o=M_m$, which is impossible as the ball has finite radius.

(b) The "if'' direction is trivial as $P_m\subset\Cr_m$. Conversely,
if $\Cr_m\neq\{o\}$, then by Lemma~\ref{lem: crit is o}
$m^{-2}$ is integrable and 
$\underset{r\to\infty}{\liminf}\, m(r)>0$, so
$R_p>0$~\cite{Tan-ball-poles-ex}. 
\end{proof}

\begin{proof}[Proof of Theorem~\ref{thm-intro: neg curv}]
By assumption there is a point of negative curvature, and since
the curvature is non-increasing, outside a compact subset the curvature is
bounded above by a negative constant. As $\displaystyle{\liminf_{r\to\infty} m(r)}>0$, 
$m$ is bounded below by a positive constant outside any neighborhood of $0$, 
so $\int_0^\infty m=\infty$. 
Hence the total curvature 
$2\pi\int_0^\infty G_m(r)\,m(r)\,dr$ is $-\infty$.

Hence there is a metric ball $B$ of finite positive
radius centered at $o$ such that the total curvature of $B$
is negative, and such that no point of $G_m\ge 0$ lies outside $B$.
By~\cite[Theorem 6.1.1, page 190]{SST}, for any $q\in M_m$ 
the total curvature of the set
obtained from $M_m$ by removing all rays that start at $q$ is in $[0,2\pi]$.
So for any $q$ there is a ray that starts at $q$ and intersects $B$. 
  
If $q$ is not in $B$, then
the ray points away from infinity, so $q\in A_m$ and
any point on this ray is in $\Cr_m$.
Thus $M_m\,\mbox{--}\, A_m$ lies in $B$. 
Since $\Cr_m\neq\{o\}$, Theorem~\ref{intro-thm: poles} implies that $R_p>0$.
Letting $q$ run to infinity the rays subconverge to a line that intersects $B$
(see e.g.~\cite[Lemma 6.1.1, page 187]{SST}. 

If $m^\prime(r_p)=0$, then the parallel through $p$ is a geodesic but not a
ray, so Lemma~\ref{lem: crit point and ray tangent to parallel}
implies that no point on the parallel through $p$ is in $\Cr_m$. Since
$\Cr_m$ contains $o$ and all points outside a compact set, 
$\Cr_m$ is not connected; the same argument proves that 
$A_m$ is not connected.
\end{proof}

\begin{ex}
\label{ex: m' vanishes}
Here we modify~\cite[Example 4]{Tan-cut} to construct a von Mangoldt
plane $M_m$ such that $m^\prime$ has a zero, 
and neither $A_m$ nor $\Cr_m$ is connected.
Given $a\in (\frac{\pi}{2}, \pi)$
let $m_0(r)=\sin r$ for $r\in [0,a]$, and define $m_0$ for $r\ge a$ so that $m_0$
is smooth, positive, and $\inf\lim_{r\to\infty}\,m_0>0$. 
Thus $K_0:=-\frac{m_0^{\prime\prime}}{m_0}$ equals $1$ on $[0,a]$. 
Let $K$ be any smooth non-increasing function with $K\le K_0$ and $K\vert_{[0,a]}=1$.
Let $m$ be the solution of (\ref{form: ode});
note that $m(r)=\sin(r)$ for $r\in [0,a]$ so that $m^\prime$ vanishes 
at $\frac{\pi}{2}$.
By Sturm comparison $m\ge m_0>0$, and hence $M_m$ is a von Mangoldt plane.
Since $m^\prime(a)<0$ and $m>0$ for all $r>0$, 
the function $m$ cannot be concave, so $K=G_m$ eventually becomes negative, 
and Theorem~~\ref{thm-intro: neg curv} implies that $A_m$ and $\Cr_m$
are not connected.
\end{ex}

\begin{ex}
\label{ex: m' positive but non-connected}
Here we construct a von Mangoldt plane such that $m^\prime>0$ everywhere
but $A_m$ and $\Cr_m$ are not connected.
Let $M_{n}$ be a von Mangoldt plane such that $G_n\ge 0$ and 
$n^\prime>0$ everywhere, and $R_n$ is finite 
(where $R_n$ is the radius of the ball $\Cr_n$).
This happens e.g. for any paraboloid, any
two-sheeted hyperboloid with $n^\prime(\infty)<\frac{1}{2}$, or
any plane constructed in Theorem~\ref{thm-intro: slope realized}
with $n^\prime(\infty)<\frac{1}{2}$.
Fix $q\notin \Cr_n$. Then $\g_q$ has turn angle $>\pi$, so there is
$R>r_q$ such that $\int_{r_q}^R F_{n(r_q)}>\pi$. Let $G$ be any smooth
non-increasing function such that $G=G_n$ on $[0,R]$ and $G(z)<0$ for
some $z>R$. Let $m$ be the solution of (\ref{form: ode}) with $K=G$.
By Sturm comparison $m\ge n>0$ and $m^\prime\ge n^\prime>0$
everywhere; see Remark~\ref{rmk: sturm derivative}. 
Since $m=n$ on $[0,R]$, on this interval 
we have $F_{m(r_q)}=F_{n(r_q)}$, so in the von Mangoldt plane $M_m$
the geodesic $\g_q$ has turn angle $>\pi$, which implies that no
point on the parallel through $q$ is in $\Cr_m$. 
Now parts (3)-(4) of Theorem~~\ref{thm-intro: neg curv} 
imply that $A_m$ and $\Cr_m$
are not connected.
\end{ex}

\begin{thm} 
\label{thm: neck}
Let $M_m$ be a von Mangoldt plane such that 
$m^\prime\vert_{[0,y]}>0$ and $m^\prime\vert_{[x,y]}<\frac{1}{2}$. 
Set $f_{m,x}(y):=m^{-1}\left(\cos(\pi b)\, m(y)\right)$, 
where $b$ is the maximum of $m^\prime$ on $[x,y]$.
If $x\le f_{m,x}(y)$, then $r(\Cr_m)$ and $[x, f_{m,x}(y)]$ are disjoint. 
\end{thm}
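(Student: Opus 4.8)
The plan is to translate membership in $\Cr_m$ into a turn-angle condition and then reduce the statement to an elementary integral estimate. Arguing by contradiction, suppose some $v$ with $r_v\in[x,f_{m,x}(y)]$ lies in $\Cr_m$. The hypothesis $m'\vert_{[x,y]}<\frac12$ forces $x>0$ because $m'(0)=1$, so $v\neq o$; hence Lemma~\ref{lem: crit point and ray tangent to parallel} says that the geodesic $\g_v$ (tangent at $v$ to the parallel through $v$, so with Clairaut constant $m(r_v)$ and $\k_{\g_v(0)}=\frac{\pi}{2}$) is a ray. In particular $\g_v$ is escaping, so Lemma~\ref{lem: rays and parallels} gives $T_{\g_v}=\int_{r_v}^\infty F_{m(r_v)}(r)\,dr$, and Lemma~\ref{lem: ray iff at most pi} gives $T_{\g_v}\le\pi$.

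Next I would record the easy consequences of the geometry. Put $b:=\max_{[x,y]}m'$; then $0<b<\frac12$ (positivity because $m'>0$ on $[0,y]$), so $\cos(\pi b)\in(0,1)$, and since $m$ is strictly increasing on $[0,y]$ we get $r_v\le f_{m,x}(y)=m^{-1}(\cos(\pi b)\,m(y))<y$. By Lemma~\ref{lem: basic choke}, $m(r)>m(r_v)$ for all $r>r_v$, so $F_{m(r_v)}$ is a positive continuous function on $(r_v,\infty)$ and $\int_y^\infty F_{m(r_v)}>0$; combining with the previous paragraph,
\[
\int_{r_v}^y F_{m(r_v)}(r)\,dr<\int_{r_v}^\infty F_{m(r_v)}(r)\,dr=T_{\g_v}\le\pi .
\]

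The contradiction comes from bounding that same integral below by $\pi$. Since $m'>0$ on $[r_v,y]\subseteq[0,y]$, I would substitute $t=m(r)/m(r_v)$, a diffeomorphism of $[r_v,y]$ onto $[1,\,m(y)/m(r_v)]$ with $dr=\frac{m(r_v)}{m'(r)}\,dt$; a short computation turns $F_{m(r_v)}(r)\,dr$ into $\frac{dt}{m'(r)\,t\sqrt{t^2-1}}$, where $r=m^{-1}(t\,m(r_v))$. On $[r_v,y]\subseteq[x,y]$ we have $m'\le b$, hence
\[
\int_{r_v}^y F_{m(r_v)}(r)\,dr\ \ge\ \frac1b\int_1^{m(y)/m(r_v)}\frac{dt}{t\sqrt{t^2-1}}\ =\ \frac1b\,\arccos\!\Big(\frac{m(r_v)}{m(y)}\Big).
\]
Finally $r_v\le f_{m,x}(y)$ and monotonicity of $m$ on $[0,y]$ give $m(r_v)\le\cos(\pi b)\,m(y)$, so $\arccos(m(r_v)/m(y))\ge\pi b$ (using $\pi b\in[0,\pi]$), and the last integral is $\ge\frac1b\cdot\pi b=\pi$, contradicting the displayed inequality. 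This proves that $r(\Cr_m)$ and $[x,f_{m,x}(y)]$ are disjoint.

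Once the reduction via Lemmas~\ref{lem: crit point and ray tangent to parallel}, \ref{lem: ray iff at most pi}, \ref{lem: rays and parallels} and \ref{lem: basic choke} is in place, the remainder is a single-variable calculus estimate, so I do not anticipate a real obstacle; the only points needing attention are keeping $\int_{r_v}^y F_{m(r_v)}<T_{\g_v}$ \emph{strict} (which is why I note $r_v<y$ and the positivity of $\int_y^\infty F_{m(r_v)}$) and the identity $\int_1^T\frac{dt}{t\sqrt{t^2-1}}=\arccos(1/T)$ with $\pi b$ kept inside the principal range of $\arccos$.
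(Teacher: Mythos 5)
Your proposal is correct and follows essentially the same route as the paper's proof: argue by contradiction via the turn-angle criterion $T_{\g_v}\le\pi$, cut the integral off at $y$, change variables to $m$, bound $m'$ by $b$ to obtain $\frac1b\arccos(m(r_v)/m(y))$, and contradict $m(r_v)\le\cos(\pi b)\,m(y)$. The only (harmless) difference is bookkeeping of where the strict inequality lives; your placement at $\int_{r_v}^y F<\int_{r_v}^\infty F$ matches the paper's.
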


\begin{proof} Set $f:=f_{m,x}$.
Arguing by contradiction assume there is 
$q\in \Cr_m$ with $r_q\in [x, f(y)]$. Then $\g_q$ has turn angle 
$\le \pi$, so if $c:=m(r_q)$, then 
\[
\pi\ge\int_{r_q}^{\infty}\frac{c\,dr}{m\,\sqrt{m^2-c^2}}>
\int_{r_q}^{y}\frac{c\,dr}{m\,\sqrt{m^2-c^2}}=
\int_c^{m(y)}\!\!\!\! \frac{c\,dm}{m^\prime(r)\,m\,\sqrt{m^2-c^2}}
\ge 
\]
\[
\int_c^{m(y)}\!\!\!\! \frac{c\,dm}{b\,m\,\sqrt{m^2-c^2}}=
\frac{1}{b}\,\mathrm{arccos}\left(\frac{c}{m(y)}\right)
\]
so that $\pi b>\mathrm{arccos}\left(\frac{c}{m(y)}\right)$, 
which is equivalent to 
$\cos(\pi b)\, m(y)< m(r_q)$.

On the other hand, $m(f(y))$ is in the interval 
$[0,m(y)]$ on which $m^{-1}$ is increasing, so $f(y)<y$, 
and therefore $m$ is increasing on $[x, f(y)]$.
Hence $r_q<f (y)$ implies $m(r_q)<m(f(y))=\cos(\pi b)\, m(y)$,
which is a contradiction.
\end{proof}

\begin{proof}[Proof of Theorem~\ref{intro-thm: neck nneg}]
We use notation of Theorem~\ref{thm: neck}.
The assumptions on $n$ imply $n^\prime>0$, 
$n^\prime\vert_{[x,\infty)}<\frac{1}{2}$, and
$b=n^\prime(x)$. Hence $f_{n,x}$ is an increasing 
smooth function of $y$ with $f_{n,x}(\infty)=\infty$.
In particular, if $y$ is large enough, then $f_{n,x}(y)>z>x$;
fix $y$ that satisfies the inequality.
Now if $M_m$ is any von Mangoldt plane with $m=n$ on $[0,y]$, 
then $f_{m,x}(y)=f_{n,x}(y)$, so $M_m$ satisfies the assumptions  
of Theorem~\ref{thm: neck}, so $[x,z]$ and $r(\Cr_m)$ are disjoint. 
\end{proof}

\appendix

\section{Von Mangoldt planes}
\label{sec: vm planes} 

The purpose of this appendix is to 
discuss what makes von Mangoldt planes special
among arbitrary rotationally symmetric planes.

For a smooth function $m\co [0,\infty)\to [0,\infty)$ 
whose only zero is $0$, let $g_m$ denote the rotationally 
symmetric inner product on the tangent bundle to
$\mathbb R^2$ that equals the 
standard Euclidean inner product at the origin
and elsewhere is given in polar coordinates by $dr^2+m(r)^2d\th^2$.
It is well-known (see e.g.~\cite[Section 7.1]{SST}) that 
\begin{itemize}
\item
any rotationally symmetric complete smooth Riemannian metric on
$\mathbb R^2$ is isometric to some $g_m$; as before $M_m$ denotes
$(\mathbb R^2, g_m)$;
\item
if $\bar m\co\mathbb R\to\mathbb R$ denotes the unique odd function
such that $\bar m\vert_{[0,\infty)}=m$, then
$g_m$ is a smooth Riemannian metric on
$\mathbb R^2$ if and only if $m^\prime(0)=1$ and $\bar m$ is smooth;
\item 
if $g_m$ is a smooth metric on $\mathbb R^2$, then $g_m$ is complete, 
and the sectional curvature 
of $g_m$ is a smooth function on $[0,\infty)$ that equals 
$-\frac{ m^{\prime\prime}}{m}$. 
\end{itemize}

It is easier to visualize $M_m$ as a surface of revolution in $\mathbb R^3$, 
so we recall:

\begin{lem}\ \newline
\textup{(1)} $M_m$ is isometric to a surface of revolution
in $\mathbb R^3$ if and only if $|m^\prime|\le 1$.\newline
\textup{(2)}
$M_m$ is isometric to a surface of revolution
$(r\cos\phi,\, r\sin\phi,\, g(r))$ in $\mathbb R^3$
if and only if $0<m^\prime\le 1$.
\end{lem}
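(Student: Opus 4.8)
The plan is to use the standard dictionary between rotationally symmetric metrics on $\mathbb R^2$ and profile curves. Recall that a surface of revolution in $\mathbb R^3$ is obtained by revolving a regular curve, lying in a half-plane, about the bounding line; since $M_m\cong\mathbb R^2$ is simply connected and noncompact, such a profile curve meets the axis in exactly one endpoint, so after arclength reparametrization it has the form $r\mapsto(\rho(r),z(r))$, $r\in[0,\infty)$, with $\rho(0)=0$, $\rho>0$ on $(0,\infty)$, and $(\rho^\prime)^2+(z^\prime)^2=1$; the induced metric in the coordinates $(r,\theta)$ is then $dr^2+\rho(r)^2\,d\theta^2$, the apex corresponding to $r=0$.

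For the ``if'' direction of (1), assuming $|m^\prime|\le 1$ I would set $z(r):=\int_0^r\sqrt{1-m^\prime(t)^2}\,dt$. Then $r\mapsto(m(r),z(r))$ is a unit-speed curve with $m(0)=0$ and $m>0$ on $(0,\infty)$, so $(r,\theta)\mapsto(m(r)\cos\theta,\,m(r)\sin\theta,\,z(r))$ is an isometry of $M_m$ onto the resulting surface of revolution, which is a regular surface away from the apex since there $m>0$ and $(m^\prime)^2+(z^\prime)^2=1$. For ``only if'', let $F$ be an isometry of $M_m$ onto a surface of revolution $S$. Conjugation by $F$ carries the rotation group of $M_m$ to an $S^1$ of isometries of $S$ each nontrivial element of which has a single fixed point; such isometries of $S$ are rotations about its axis, so this $S^1$ is the rotation group of $S$, and $F$ maps $o$, the common fixed point of the rotations of $M_m$, to the apex of $S$. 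Then the geodesic circle $\{d(\cdot,o)=r\}$ of $M_m$ maps to the geodesic circle about the apex of radius $r$ in $S$, which is the parallel of $S$ at some axis-distance $\rho(r)$, so $2\pi\rho(r)=\operatorname{length}\{d(\cdot,o)=r\}=2\pi m(r)$. Hence $\rho\equiv m$, and since $(\rho^\prime)^2\le 1$ along a profile curve we get $|m^\prime|\le 1$.

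For (2), the form $(r\cos\phi,r\sin\phi,g(r))$ is exactly the case in which the profile curve is the graph $z=g(\rho)$ of a function over the $\rho$-axis; under the identification $\rho\equiv m$ from part (1), this occurs precisely when $m$ is strictly increasing, i.e.\ $m^\prime>0$ (and then $m^\prime\le 1$ by (1)). Conversely, if $0<m^\prime\le 1$ then $m$ is a diffeomorphism of $[0,\infty)$ onto its image and $g(\rho):=z\!\left(m^{-1}(\rho)\right)$, with $z$ as above, is the desired function: under $r=m^{-1}(\rho)$ one computes $g^\prime(\rho)^2=m^\prime(r)^{-2}-1$, so $(1+g^\prime(\rho)^2)\,d\rho^2+\rho^2\,d\phi^2$ becomes $dr^2+m(r)^2\,d\phi^2=g_m$.

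The first-fundamental-form computations and the two changes of variables are routine; the step I expect to be the main obstacle is smoothness of the surface across the apex in the ``if'' directions, since near $r=0$ the profile curve is a priori only a $C^0$ graph over the axis. Here I would invoke the smoothness built into $g_m$ — that the odd extension $\bar m$ is smooth with $\bar m^\prime(0)=1$, so that $1-(m^\prime)^2$ is an even function of $r$ vanishing at $0$ — and, by expanding $m^\prime$ about $0$ and integrating, conclude that $z$ (respectively $g$) is near the apex a smooth function of $r^2$, so that the surface of revolution is genuinely smooth there.
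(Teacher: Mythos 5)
Your proposal is correct and follows essentially the same route as the paper: take the profile curve in arclength form $(m(r),z(r))$ with $z(r)=\int_0^r\sqrt{1-(m^\prime)^2}$ for the ``if'' directions, and invert $m$ to pass to the graph form in (2). You are in fact somewhat more careful than the paper on the ``only if'' direction (pinning down $\rho\equiv m$ via the isometry group rather than just matching metric coefficients) and on smoothness at the apex, which the paper does not discuss at all.
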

\begin{proof}
(1)
Consider a unit speed curve $s\to (x(s), 0, z(s))$ 
in $\mathbb R^3$ where $x(s)\ge 0$ and $s\ge 0$.
Rotating the curve about the $z$-axis gives the surface
of revolution \[
(x(s)\cos\phi, x(s)\sin\phi, z(s))
\]
with metric $ds^2+x(s)^2d\phi^2$. For this metric to be equal
to $ds^2+m(s)^2d\phi^2$ we must have $m(s)=x(s)$.
Since the curve has unit speed, $|x^\prime(s)|\le 1$, so
a necessary condition for writing the metric as a surface of revolution 
is $|m^\prime(s)|\le 1$. 
It is also sufficient for if $|m^\prime(s)|\le 1$,
then we could let $z(s):=\int_0^s\sqrt{1-(m^\prime(s))^2}ds$,
so that now $(m(s), z(s))$ has unit speed. 

(2)
If furthermore $m^\prime>0$ for all $s$, then the inverse
function of $m(s)$ makes sense,
and we can write the surface of revolution 
$(m(s)\cos\phi,\, m(s)\sin\phi,\, z(s))$ as
$(x\cos\phi,\, x\sin\phi,\, g(x))$ where $x:=m(s)$ and $g(x):=z(m^{-1}(x))$.
Conversely, given the surface $(x\cos\phi,\, x\sin\phi,\, g(x))$, the 
orientation-preserving arclength parametrization $x=x(s)$ of the curve
$(x,0,g(x))$ satisfies $x^\prime>0$.
\end{proof}

\begin{ex}
The standard $\mathbb R^2$ is the only von Mangoldt plane with $G_m\le 0$ that
can be embedded into $\mathbb R^3$ as a surface of revolution because
$m^\prime(0)=1$ and $m^\prime$ is non-decreasing afterwards.
\end{ex}

\begin{ex} 
\label{ex: nneg m' append}
If $G_m\ge 0$, then $m^\prime\in [0,1]$
because $m>0$, $m^\prime$ is non-increasing, and $m^\prime(0)=1$,
so that $M_m$ is isometric to a surface of revolution in $\mathbb R^3$.
In fact, if $m^\prime(s_0)=0$, then $m\vert_{[s_0,\infty)}=m(s_0)$,
i.e. outside the $s_0$-ball about the origin $M_m$ is a cylinder.
Thus except for such surfaces $M_m$ can be written as 
$(x\cos\phi, x\sin\phi, g(x))$ for
$g(x)=\int_0^{m^{-1}(x)}\sqrt{1-(m^\prime(s))^2}ds$.
Paraboloids and two-sheeted hyperboloids are von Mangoldt planes
of positive curvature~\cite[pp. 234-235]{SST} and they are of the form
$(x\cos\phi, x\sin\phi, g(x))$.
\end{ex}

The defining property $G_m^\prime\le 0$ 
of von Mangoldt planes clearly restricts the behavior of $m^\prime$. 
Let $Z(G_m)$ denote the set where $G_m$ vanishes; as
$M_m$ is von Mangoldt, $Z(G_m)$ is closed and connected, and
hence it could be equal to the empty set, a point, or an interval,
while $m^\prime$ behaves as follows.
\begin{itemize}
\item[(i)] If $G_m>0$, then $m^\prime$ is decreasing
and takes values in $(0,1]$.
\item[(ii)] If $G_m\le 0$, then $m^\prime$ is non-decreasing
and takes values in $[1, \infty)$. 
\item[(iii)] 
If $Z(G_m)$ is a positive number $z$, 
then $m^\prime$ decreases on $[0,z)$ and increases on $(z,\infty)$, 
and $m^\prime$ may have two, one, or no zeros. 
\item[(iv)] If $Z(G_m)=[a, b]\subset (0,\infty]$, 
then $m^\prime$ decreases on $[0,a)$,
is constant on $[a,b]$, and increases on $(b,\infty)$ 
if $b<\infty$. Also either $m^\prime\vert_{[a,b]}=0$
or else $m^\prime$ has two, or no zeros.
\end{itemize}

\begin{rmk}
All the above possibilities occur with one possible
exception: in cases (iii)-(iv) we are not aware 
of examples where $m^\prime$ vanishes
on $Z(G_m)$.
\end{rmk}

\begin{rmk} 
\label{rmk: app, tot curv}
Thus if $M_m$ is von Mangoldt, then
$m^\prime$ is monotone near infinity,
so $m^\prime(\infty)$ exists; moreover, 
$m^\prime(\infty)\in [0,\infty]$, for otherwise 
$m$ would vanish on $(0,\infty)$. 
It follows that $M_m$ admits total curvature, which equals
\[
\int_0^{2\pi}\int_0^\infty G_m\, m\,dr\, d\th=
-2\pi\int_0^\infty m^{\prime\prime}=
2\pi(1-m^\prime(\infty))\,\in [-\infty, 2\pi].
\] 
Here the {\it total curvature of a subset\,} $A\subset M_m$
is the integral of $G_m$ over $A$ with respect to
the Riemannian area form $m\,dr\, d\th$, provided the integral converges
to a number in $[-\infty,\infty]$,
in which case we say that $A$ {\it admits total curvature\,}.
\end{rmk}

\begin{rmk}
The zeros of $m^\prime$ correspond to parallels that are geodesics
and are of interest. In contrast with restrictions on the zero set 
of $m^\prime$ for von Mangoldt planes, 
if $M_m$ is not necessarily von Mangoldt, then
any closed subset of $[0,\infty)$ that does not contain $0$
can be realized as the set of zeros of $m^\prime$. 
(Indeed, for any closed subset of a manifold there is a 
smooth nonnegative function that vanishes precisely on the 
subset~\cite[Whitney's Theorem 14.1]{BroJan}. It follows that
if $C$ is a closed subset of $[0,\infty)$ that does not contain $0$,
then there is a smooth function $g\co [0,\infty)\to [0,\infty)$ that 
is even at $0$, satisfies $g(0)=1$,
and is such that $g(s)=0$ if and only if $s\in C$. 
If $m$ is the solution of $m^\prime=g$, $m(0)=0$;
then $M_m$ has the promised property).
\end{rmk}

A common way of constructing von Mangoldt planes 
involves the Jacobi initial value problem
\begin{equation}
\label{form: ode}
m^{\prime\prime}+Km=0,\ \ m(0)=0,\ \ m^\prime(0)=1
\end{equation}
where $K$ is smooth on $[0,\infty)$.
It follows from the proof of~\cite[Lemma 4.4]{KazWar-open} that 
$g_m$ is a complete smooth
Riemannian metric on $\mathbb R^2$ if and only if
the following condition holds
\begin{center}
\hspace{-44pt}($\star$)\qquad\qquad\it 
the (unique) solution $m$ of \textup{(\ref{form: ode})} 
is positive on $(0,\infty)$.\rm 
\end{center}

\begin{rmk} 
\label{rmk: sturm}
A basic tool that produces solutions of (\ref{form: ode})
satisfying condition ($\star$) is the Sturm comparison theorem 
that implies that 
if $m_1$ is a positive function that solves (\ref{form: ode}) with $K=K_1$, 
and if $K_2$ is any non-increasing smooth function with 
$K_2\le K_1$, then the solution $m_2$ of (\ref{form: ode}) with $K=K_2$
satisfies $m_2\ge m_1$, so that $g_{m_2}$ is a von Mangoldt plane.  
\end{rmk}

\begin{ex}
If $K$ is a smooth function on $[0, \infty)$
such that $\max(K,0)$ has compact support, then
a positive multiple of $K$ can be realized as 
the curvature $G_m$ of some $M_m$; of course, 
if $K$ is non-increasing, then $M_m$ is von Mangoldt.
(Indeed, in~\cite[Lemma 4.3]{KazWar-open} Sturm comparison was
used to show that if 
$\int_t^{\infty}\max(K, 0)\le\frac{1}{4t+4}$ for all $t\ge 0$, then
$K$ satisfies ($\star$), and in particular, if
$\max(K,0)$ has compact support, then there is a constant $\e>0$
such that the above inequality holds for $\e K$). 
\end{ex}

\begin{rmk}
\label{rmk: sturm derivative}
A useful addendum to Remark~\ref{rmk: sturm}
is that the additional assumption $m_1^\prime\ge 0$
implies $m_2^\prime\ge m_1^\prime>0$.
(Indeed, the function $m_1^\prime\, m_2-m_1\,m_2^\prime$ 
vanishes at $0$ and has nonpositive derivative $(-K_1+K_2)\,m_1\,m_2$,
so $m_1^\prime\, m_2\le m_1\,m_2^\prime$. As $m_1$, $m_2$, $m_1^\prime$
are nonnegative, so is  $m_2^\prime$. Hence,
$m_1\,m_2^\prime\le m_2\,m_2^\prime$, which gives
$m_1^\prime\, m_2\le m_2\,m_2^\prime$, and the claim follows by
canceling $m_2$).
\end{rmk}

\begin{quest} Let $m_0\co [r_0,\infty)\to (0,\infty)$ be a smooth function 
such that $r_0>0$ and $-\frac{m_0^{\prime\prime}}{m_0}$ is non-increasing.
What are sufficient conditions for (or obstructions to) extending
$m_0$ to a function $m$ on $[0,\infty)$ such that $g_m$ 
is a von Mangoldt plane?
\end{quest}

\section{A calculus lemma}
\label{sec: calc lem}

This appendix contains an elementary lemma on continuity
and differentiability of the turn angle, which is needed
for Theorem~\ref{thm: max devia vs turn angle}.

Given numbers $r_q>r_0>0$, 
let $m$ be a smooth self-map of $(0,\infty)$ such that
\begin{itemize}
\item $m^\prime>0$ on $[r_0, r_q]$, 
\item $m(r)>m(r_q)$ for $r>r_q$, 
\item $m^{-2}$ is integrable on $(1,\infty)$,
\item $\displaystyle{\liminf_{r\to\infty}\, m(r)}>m(r_q)$. 
\end{itemize}

\begin{ex} Suppose $G_m\ge 0$ or $G_m^\prime\le 0$.
If $\g_q$ is a ray on $M_m$, and $r_0$ is sufficiently close to $r_q$,
then $m$ satisfies the above properties by 
Lemmas~\ref{lem: basic choke}, 
\ref{ex: ray exists implies properties of m}, \ref{lem: m growth}.
\end{ex}

Set $c_0:=m(r_0)$ and $c_q:=m(r_q)$. 
Let $T=T(c)$ be the function given by the integral 
(\ref{form: upward turn angle via c}) for $c=c_q$, and by the sum of integrals
(\ref{form: turn angle via c}) for $c_0\le c\le c_q$, where $F_c$ is given by 
(\ref{form: defn F_c}) and $r_u:=m^{-1}(c)$, 
where $m^{-1}$ is the inverse of $m\vert_{[r_0, r_q]}$. 

\begin{lem} 
\label{lem: calc}
Under the assumptions of the previous paragraph,
$T$ is continuous on $(c_0, c_q]$, continuously differentiable on 
$(c_0, c_q)$, and $T^\prime(c)\sqrt{c_q^2-c^2}$ 
converges to $-\frac{1}{m^\prime(r_q)}<0$ as $c\to c_q-$.
\end{lem}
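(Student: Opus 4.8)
The plan is to split the turn angle $T(c)$ into a \emph{singular piece} near $r=r_u$ and a \emph{tame piece} away from it, and then extract explicitly the $(c_q^2-c^2)^{-1/2}$-type blow-up coming from the lower endpoint $r=r_q$ as $c\to c_q-$. Concretely, fix a cutoff $\delta$ with $r_q<\delta$ small enough that $m'>0$ on $[r_0,\delta]$ and $m(r)>m(\delta)$ for $r>\delta$; such $\delta$ exists by the four standing hypotheses. For $c$ in a small left-neighborhood of $c_q$ write
\[
T(c)=\int_{r_q}^{\delta}F_c(r)\,dr+2\int_{r_u}^{r_q}F_c(r)\,dr+\int_{\delta}^{\infty}F_c(r)\,dr,
\]
where the last integral, being dominated by $K m^{-2}$ with $K$ independent of small changes in $c$, is smooth in $c$ with bounded derivative, hence contributes nothing to the claimed limit. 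So everything reduces to the first two integrals over the compact interval $[r_u,\delta]$.

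On $[r_u,\delta]$ the factor $m^2-c^2$ vanishes only at $r=r_u$, to first order (since $m'(r_u)>0$), so I would write $m(r)^2-c^2=(r-r_u)\,\psi(r,c)$ with $\psi$ smooth and positive on a neighborhood of $\{(r_u,c):c\approx c_q\}$, giving
\[
F_c(r)=\frac{c}{m(r)\sqrt{\psi(r,c)}}\,(r-r_u)^{-1/2}=:h(r,c)\,(r-r_u)^{-1/2},
\]
with $h$ smooth and positive. Substituting $r=r_u+w^2$ turns each of $\int_{r_u}^{r_q}$ and $\int_{r_q}^{\delta}$ into an integral of a smooth function of $(w,c)$ over a $w$-interval with endpoints that depend smoothly on $c$ (one endpoint being $0$, the others being $\sqrt{r_q-r_u}$ and $\sqrt{\delta-r_u}$, and $r_u=m^{-1}(c)$ is smooth in $c$ because $m'>0$ there). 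This is exactly the situation covered by differentiation under the integral sign plus the fundamental theorem of calculus for the moving endpoints, which gives continuity of $T$ on $(c_0,c_q]$ and continuous differentiability on $(c_0,c_q)$. (Continuity up to $c_q$ is the content of the integration-by-parts remark already used in the proof of Lemma~\ref{lem: turn angle of gamma_q is continuous}.)

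For the asymptotics of $T'(c)\sqrt{c_q^2-c^2}$ as $c\to c_q-$, the point is that as $c\to c_q$ the endpoint $r_u\to r_q$, so the two singular integrals over $[r_u,r_q]$ and $[r_q,\delta]$ develop a \emph{shared} singular endpoint; the dangerous term in $T'(c)$ is the one where the derivative falls on the factor $(r-r_u)^{-1/2}$ near $r=r_q$, producing a factor $(r_q-r_u)^{-1/2}$ times $\frac{dr_u}{dc}=\frac{1}{m'(r_u)}$. Since $c_q^2-c^2=(c_q-c)(c_q+c)$ and $c_q-c=m(r_q)-m(r_u)\sim m'(r_q)(r_q-r_u)$, one gets $\sqrt{c_q^2-c^2}\sim \sqrt{2c_q\,m'(r_q)}\,\sqrt{r_q-r_u}$, and multiplying by the $(r_q-r_u)^{-1/2}$ blow-up leaves a finite limit. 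Tracking the constants — the value of $h(r_q,c_q)$, which involves $\psi(r_q,c_q)=\lim_{r\to r_q}\frac{m(r)^2-c_q^2}{r-r_q}=2c_q m'(r_q)$, together with the sign bookkeeping between the $2\int_{r_u}^{r_q}$ term and the $\int_{r_q}^{\delta}$ term — collapses the answer to exactly $-\dfrac{1}{m'(r_q)}$, which is negative as claimed. All remaining pieces of $T'(c)$ (the genuinely tame $\int_\delta^\infty$ part, and the parts of the singular integrals where the $c$-derivative hits the smooth factor $h$ rather than $(r-r_u)^{-1/2}$, or hits the endpoint $\delta$) stay bounded and therefore vanish after multiplication by $\sqrt{c_q^2-c^2}\to0$.

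The main obstacle I expect is the endpoint bookkeeping near $r=r_q$: after the substitution $r=r_u+w^2$, the lower limit $0$ is fixed but the intermediate point $\sqrt{r_q-r_u}$ and $r_u$ itself both move with $c$, so $T'(c)$ is a sum of an integral term (harmless) plus several boundary terms from Leibniz's rule at the moving endpoints, and one must verify that the contributions of the two singular integrals at the common point $w=\sqrt{r_q-r_u}$ do \emph{not} cancel (they add, because of the coefficient $2$ and the opposite orientation), while being careful that the $h$-factor is genuinely continuous across $r=r_q$ so that the two one-sided limits of $h$ agree. Once the substitution is set up cleanly this is a finite, if slightly tedious, computation; I would organize it by isolating the single term $\partial_c\!\big[(r-r_u(c))^{-1/2}\big]$ responsible for the blow-up and estimating everything else crudely.
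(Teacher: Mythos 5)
Your plan is correct and reaches the right constant, but it is organized differently from the paper's proof, so a comparison is worthwhile. The paper uses the same three-piece decomposition (over $[r_u,r_q]$, $[r_q,\delta]$, $[\delta,\infty)$) but treats the two singular pieces by two different devices: for $\int_{r_u}^{r_q}F_c$ it substitutes $t=m/c$ and applies the Leibniz rule, the moving upper limit $c_q/c$ producing the boundary term $-\bigl(m^\prime(r_q)\sqrt{c_q^2-c^2}\bigr)^{-1}$; for $\int_{r_q}^{\infty}F_c$ it differentiates under the integral sign and then integrates $\int_{r_q}^{\delta}m(m^2-c^2)^{-3/2}$ by parts, extracting the boundary term $+\bigl(m^\prime(r_q)\sqrt{c_q^2-c^2}\bigr)^{-1}$. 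You instead desingularize both pieces at once with $r=r_u+w^2$, which turns everything into integrals of a smooth function over intervals with smoothly moving endpoints and pushes the entire blow-up into the derivative of the shared endpoint $\sqrt{r_q-r_u}$. That is a legitimate and arguably more unified route; your identification of $\psi(r_q,c_q)=2c_qm^\prime(r_q)$ and of $\sqrt{c_q^2-c^2}\sim\sqrt{2c_qm^\prime(r_q)}\,\sqrt{r_q-r_u}$ is exactly what makes the constants collapse to $-1/m^\prime(r_q)$.

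Two small corrections to your bookkeeping. First, the sentence about the derivative "falling on the factor $(r-r_u)^{-1/2}$ near $r=r_q$" does not describe the actual mechanism: after the substitution there is no such factor left, and (as you say later) the divergence lives entirely in $\frac{d}{dc}\sqrt{r_q-r_u(c)}$. Second, the two boundary contributions at $w=\sqrt{r_q-r_u}$ do \emph{not} add — they partially cancel. Writing $X:=2g(a,c)\,a^\prime(c)$ for the upper-endpoint term of $\int_{r_u}^{r_q}$ after substitution, the term enters $T^\prime$ as $2X$ (because of the coefficient $2$ on that integral) while the lower-endpoint term of $\int_{r_q}^{\delta}$ enters as $-X$, leaving a net $X$; this matches the paper's tally $2\cdot(-1)+1=-1$ times $\bigl(m^\prime(r_q)\sqrt{c_q^2-c^2}\bigr)^{-1}$. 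Since $X<0$, the net is nonzero and gives precisely the claimed limit, so the slip is only in the wording, not in the conclusion. The remaining points (uniform domination of $F_c$ and $\partial_cF_c$ by a multiple of $m^{-2}$ on $[\delta,\infty)$ using $\inf_{[\delta,\infty)}m>c_q$, boundedness of the interior Leibniz integrals, and continuity at $c=c_q$ via $a(c)\to0$) are all as in the paper and are fine.
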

\begin{proof} By definition $T$ equals $\int_{r_q}^\infty F_c+
\int_{r_u}^{r_q} F_c$ if $c\in [c_0, c_q)$ and 
$T=\int_{r_q}^\infty F_{c}$ if $c=c_q$. 
Step 1 shows that 
$\int_{r_q}^\infty F_c$ depends continuously on 
$c\in [c_0, c_q]$, 
while Step 2 establishes continuity of $T$ at $c_q$.
In Steps 3--4 we prove continuous differentiability and
compute the derivatives of the integrals
$\int_{r_q}^\infty F_c$, $\int_{r_u}^{r_q} F_c$ 
with respect to $c\in (c_0, c_q)$.
Step 5 investigates the behaviour of $T^\prime(c)$ 
as $c\to c_q$.

Recall that the integral 
$\int_a^b H_c(r)dr$ depends continuously on $c$ 
if for each $r\in (a,b)$ the map $c\to H_c(r)$ is continuous, 
and every $c$ has a neighborhood $U_0$ in which
$|H_c|\le h_0$ for some integrable function $h_0$.
If in addition each map $c\to H_c(r)$ is $C^1$, and 
every $c$ has a neighborhood $U_1$ where 
$|\frac{\d H_c}{\d c}|\le h_1$ for an integrable function $h_1$,
then $\int_a^b H_c(r)dr$ is $C^1$ and differentiation
under the integral sign is valid; the same conclusion holds
when $H_c$ and $\frac{\d H_c}{\d c}$ are continuous in the closure of
$U_1\times (a,b)$.

{\bf Step 1.}\ \
The integrand $F_c$ is smooth over $(r_u,\infty)$, because 
the assumptions on $m$ imply that 
$m(r)> c$ for $r> r_u$.

Since $0<c\le c_q$ we have $F_c\le F_{c_q}=\frac{c_q}{m\sqrt{m^2-c_q^2}}$
which is integrable on $(r_q, \infty)$. Indeed, fix $\de>r_q$ and note that
since $m^{-2}$ is integrable on $(\de,\infty)$, so is $F_{c_q}$.
To prove integrability of $F_{c_q}$ on $(r_q, \de)$,
note that $h(r):=\frac{m(r)-m(r_q)}{r-r_q}$
is positive on $[r_q, \infty)$, as $h(r_q)=m^\prime(r_q)>0$ 
and $m(r)>m(r_q)$ for $r>r_q$. Then $F_{c_q}$ is the product of
$(r-r_q)^{-1/2}$ and a function that is smooth on $[r_q, \de]$,
and hence $F_{c_q}$ is integrable on $(r_q, \de)$.

Thus the integrals $\int_{r_q}^{\de} F_{c}(r) dr$ and
$\int_{\de}^{\infty} F_{c}(r) dr$ depend continuously on $c\in (0,c_q]$,
and hence so does their sum $\int_{r_q}^{\infty} F_{c}(r) dr$.

{\bf Step 2.}\ \
As $c\to c_q$, the integral $\int_{r_u}^{r_q} F_c$ converges to zero, 
for if $K$ is the maximum of $(m\, m^\prime\, \sqrt{m+c})^{-1}$
over the points with $r\in [r_0, r_q]$ and $c\in [c_0, c_q]$, then 
\[
\int_{r_u}^{r_q} F_c\le K\int_{r_u}^{r_q} \frac{m^\prime dr}{\sqrt{m-c}}=
K\int_0^{c_q-c}\frac{dt}{\sqrt{t}}
\]
which goes to zero as $c\to c_q$. Thus $T$ is continuous at $c=c_q$.

{\bf Step 3.}\ \
To find an integrable function dominating $\frac{\d F_c}{\d c}$ 
on $(r_q,\infty)$ locally in $c$, note that every
$c\in (c_0,c_q)$ has a neighborhood of the form $(c_0,  c_q-\de)$
with $\de>0$, and over this neighborhood
\[
\frac{\d  F_c}{\d c}=
\frac{m}{(m^2-c^2)^{3/2}}\le 
\frac{m}{(m^2-(c_q-\de)^2)^{3/2}},
\]
where the right hand side
is integrable over $[r_q,\infty)$, as $m^{-2}$ is integrable
at $\infty$; thus
\[
\frac{d}{dc} \int_{r_q}^\infty F_c=\int_{r_q}^\infty 
\frac{m}{(m^2-c^2)^{3/2}} dr
\]
is continuous with respect to $c\in (c_0,c_q)$. 
This integral diverges if $c=m(r_q)$.

{\bf Step 4.}\ \
To check continuity of $\int_{r_u}^{r_q} F_c$ 
change variables via $t:=\frac{m}{c}$ so that $r=m^{-1}(tc)$. 
Thus $dt=m^\prime(r)\frac{dr}{c}=n(tc)\frac{dr}{c}$ where
$n(r):=m^\prime(m^{-1}(r))$, and
\[
\int_{r_u}^{r_q} F_{c}(r) dr=\int_1^{c_q/c}{\bar F}_c(t) dt
\qquad\text{where}\qquad {\bar F}_c(t)=\frac{1}{n(tc)\,t\,\sqrt{t^2-1}}.
\]
Since $m^\prime>0$ on $[r_0, r_q]$ and $n(tc)=m^\prime(r)$, 
the function ${\bar F}_c$ is smooth
over $(1, \frac{c_q}{c})$. To prove continuity of
$\int_1^{c_q/c}{\bar F}_c$, fix an arbitrary 
$(u, v)\subset (c_0, c_q)$. If $c\in (u,v)$
and $t\in (1,\frac{c_q}{c})$, then $m^{-1}(tc)$ lies
in the $m^{-1}$-image of $(u,\frac{v}{u}c_q)$, which by taking the interval
$(u,v)$ sufficiently
small can be made to lie in an arbitrarily small neighborhood of
$[r_0, r_q]$, so we may assume that $m^\prime>0$ on that neighborhood.
It follows that the maximum $K$ of $\frac{1}{n(tc)}$ 
over $c\in [u,v]$ and $t\in [1,\frac{c_q}{c}]$ is finite, and
$|{\bar F}_c|\le \frac{K}{t\,\sqrt{t^2-1}}$ for $c\in (u,v)$,
i.e. $|F_c|$ is locally dominated by an integrable function
that is independent of $c$; for the same reason 
the conclusion also holds for $\frac{\d {\bar F}_c}{\d c}=
-\frac{n^\prime(tc)}{n(tc)^2\sqrt{t^2-1}}$.

Finally, given $c_*\in (c_0, c_q)$ fix $\de\in (1, \frac{c_q}{c_*})$, 
and write $\int_1^{c_q/c}{\bar F}_c=
\int_1^{\de}{\bar F}_c+\int_\de^{c_q/c}{\bar F}_c$ for $c$ varying near $c_*$.
The first summand is $C^1$ at $c_*$, as the integrand and its derivative are
dominated by the integrable function near $c_*$.
The second summand is also $C^1$ at $c_*$ as the integrand is $C^1$ on 
a neighborhood of $\{c_*\}\times [\de, \frac{c_q}{c}]$.
By the integral Leibnitz rule
\[
\frac{d}{dc}\int_1^{c_q/c}{\bar F}_c=
-\frac{c_q}{c^2}\, {\bar F}_c\left(\frac{c_q}{c}\right)
-\int_1^{c_q/c}\frac{n^\prime(tc)\, dt}{n(tc)^2\sqrt{t^2-1}}.
\]
The first summand equals 
$-(m^{\prime}(r_q)\sqrt{c_q^2-c^2})^{-1}$, and the second summand is bounded.

{\bf Step 5.}\ \ Let us investigate the behavior of
$\int_{r_q}^\infty \frac{m}{(m^2-c^2)^{3/2}} dr$ from Step 3 as $c\to c_q-$. 
Fix $\de>r_q$ such that
$m^\prime>0$ on $[r_0,\de]$ and write the above integral as the sum of the integrals
over $(r_q, \de)$ and $(\de, \infty)$. The latter one is bounded.
Integrate the former integral by parts as
\[
\int_{r_q}^\de \frac{m\, m^\prime}{m^\prime\,(m^2-c^2)^{3/2}} dr=
-\int_{r_q}^\de\frac{1}{m^\prime}d\left(\frac{1}{\sqrt{m^2-c^2}}\right)=
\]
\[
\frac{1}{m^\prime(r_q)\sqrt{c_q^2-c^2}}-
\frac{1}{m^\prime(\de)\sqrt{\de^2-c^2}}-
\int_{r_q}^\de \frac{m^{\prime\prime}\, dr}{(m^\prime)^2\sqrt{m^2-c^2}}
\]
Only the first summand is unbounded as $c\to c_q-$. 
The terms from Step 4 and 5 enter into $T^\prime$ with coefficients
$2$ and $1$, respectively, so as $c\to c_q-$ 
\[
T^\prime(c)\sqrt{c_q^2-c^2}\to -\frac{1}{m^\prime(r_q)} <0
\]
as the bounded terms multiplied by $\sqrt{c_q^2-c^2}$ disappear in the limit. 
\end{proof}

\small
\bibliographystyle{amsalpha}
\bibliography{vm}

\providecommand{\bysame}{\leavevmode\hbox to3em{\hrulefill}\thinspace}
\providecommand{\MR}{\relax\ifhmode\unskip\space\fi MR }
\providecommand{\MRhref}[2]{%
  \href{http://www.ams.org/mathscinet-getitem?mr=#1}{#2}
}
\providecommand{\href}[2]{#2}
\begin{thebibliography}{Tan92b}

\bibitem[BJ82]{BroJan}
T.~Br{\"o}cker and K.~J{\"a}nich, \emph{Introduction to differential topology},
  Cambridge University Press, Cambridge, 1982, Translated from the German by C.
  B. Thomas and M. J. Thomas.

\bibitem[Ele80]{Ele}
D.~Elerath, \emph{An improved {T}oponogov comparison theorem for nonnegatively
  curved manifolds}, J. Differential Geom. \textbf{15} (1980), no.~2, 187--216
  (1981). \MR{614366 (83b:53039)}

\bibitem[Gre97]{Gre-surv}
R.~E. Greene, \emph{A genealogy of noncompact manifolds of nonnegative
  curvature: history and logic}, Comparison geometry ({B}erkeley, {CA},
  1993--94), Math. Sci. Res. Inst. Publ., vol.~30, Cambridge Univ. Press,
  Cambridge, 1997, pp.~99--134.

\bibitem[Gro93]{Grove-crit-pt-1993}
K.~Grove, \emph{Critical point theory for distance functions}, Differential
  geometry: {R}iemannian geometry ({L}os {A}ngeles, {CA}, 1990), Proc. Sympos.
  Pure Math., vol.~54, Amer. Math. Soc., Providence, RI, 1993, pp.~357--385.

\bibitem[GS79]{GluSin}
H.~Gluck and D.~A. Singer, \emph{Scattering of geodesic fields. {II}}, Ann. of
  Math. (2) \textbf{110} (1979), no.~2, 205--225.

\bibitem[IMS03]{IMS-topon-vm}
Y.~Itokawa, Y.~Machigashira, and K.~Shiohama, \emph{Generalized {T}oponogov's
  theorem for manifolds with radial curvature bounded below}, Explorations in
  complex and {R}iemannian geometry, Contemp. Math., vol. 332, Amer. Math.
  Soc., Providence, RI, 2003, pp.~121--130.

\bibitem[KO07]{KonOht-2007}
K.~Kondo and S.-I. Ohta, \emph{Topology of complete manifolds with radial
  curvature bounded from below}, Geom. Funct. Anal. \textbf{17} (2007), no.~4,
  1237--1247.

\bibitem[KT]{KonTan-I}
K.~Kondo and M.~Tanaka, \emph{Total curvatures of model surfaces control
  topology of complete open manifolds with radial curvature bounded below.
  {I}}, arXiv:0901.4010v2 [math.DG].

\bibitem[KT10]{KonTan-II}
\bysame, \emph{Total curvatures of model surfaces control topology of complete
  open manifolds with radial curvature bounded below. {II}}, Trans. Amer. Math.
  Soc. \textbf{362} (2010), no.~12, 6293--6324.

\bibitem[KW74]{KazWar-open}
J.~L. Kazdan and F.~W. Warner, \emph{Curvature functions for open
  {$2$}-manifolds}, Ann. of Math. (2) \textbf{99} (1974), 203--219.

\bibitem[Mac10]{Mac}
Y.~Machigashira, \emph{Generalized {T}oponogov comparison theorem for manifolds
  of roughly non-negative radial curvature}, Information \textbf{13} (2010),
  no.~3B, 835--841.

\bibitem[Mae75]{Mae-2rays}
M.~Maeda, \emph{On the total curvature of noncompact {R}iemannian manifolds},
  K\=odai Math. Sem. Rep. \textbf{26} (1974/75), 95--99.

\bibitem[Men97]{Men}
S.~J. Mendon{\c{c}}a, \emph{The asymptotic behavior of the set of rays},
  Comment. Math. Helv. \textbf{72} (1997), no.~3, 331--348.

\bibitem[MS06]{MasShi}
Y.~Mashiko and K.~Shiohama, \emph{Comparison geometry referred to warped
  product models}, Tohoku Math. J. (2) \textbf{58} (2006), no.~4, 461--473.

\bibitem[Pet06]{Pet-book}
P.~Petersen, \emph{Riemannian geometry}, second ed., Graduate Texts in
  Mathematics, vol. 171, Springer, New York, 2006.

\bibitem[Sak96]{Sak-book}
T.~Sakai, \emph{Riemannian geometry}, Translations of Mathematical Monographs,
  vol. 149, American Mathematical Society, Providence, RI, 1996, Translated
  from the 1992 Japanese original by the author.

\bibitem[SST03]{SST}
K.~Shiohama, T.~Shioya, and M.~Tanaka, \emph{The geometry of total curvature on
  complete open surfaces}, Cambridge Tracts in Mathematics, vol. 159, Cambridge
  University Press, Cambridge, 2003.

\bibitem[ST02]{ShiTan-mathZ-2002}
K.~Shiohama and M.~Tanaka, \emph{Compactification and maximal diameter theorem
  for noncompact manifolds with radial curvature bounded below}, Math. Z.
  \textbf{241} (2002), no.~2, 341--351.

\bibitem[ST06]{SinTan-experim}
R.~Sinclair and M.~Tanaka, \emph{A bound on the number of endpoints of the cut
  locus}, LMS J. Comput. Math. \textbf{9} (2006), 21--39 (electronic).

\bibitem[Tan92a]{Tan-ball-poles-ex}
M.~Tanaka, \emph{On a characterization of a surface of revolution with many
  poles}, Mem. Fac. Sci. Kyushu Univ. Ser. A \textbf{46} (1992), no.~2,
  251--268.

\bibitem[Tan92b]{Tan-cut}
\bysame, \emph{On the cut loci of a von {M}angoldt's surface of revolution}, J.
  Math. Soc. Japan \textbf{44} (1992), no.~4, 631--641.

\end{thebibliography}

\end{document}